\documentclass[twoside,10pt, a4paper]{article}
\makeatletter
\def\blfootnote{\gdef\@thefnmark{}\@footnotetext}
\makeatother
\usepackage[ansinew]{inputenc}
\usepackage[T1]{fontenc}
\usepackage{amsmath}
\usepackage{amsthm}
\usepackage{amscd, amsfonts, amssymb}
\usepackage[all]{xy}
\usepackage{mathrsfs}
\usepackage[dvips]{graphicx}
\usepackage{epic, eepic}
\usepackage[colorlinks = true, citecolor = purple, urlcolor  = purple, linkcolor = violet]{hyperref}
\usepackage{xcolor}

\usepackage{titling}
\usepackage{titlesec}
\titleformat*{\section}{\normalfont\Large\color{blue!80!black}}
\titleformat*{\subsection}{\normalfont\large\color{blue!80!black}}
\titleformat*{\subsubsection}{\normalfont\normalsize\color{blue!80!black}}
\usepackage[dotinlabels]{titletoc}
\usepackage{abstract}
\usepackage{helvet}         % selects Helvetica as sans-serif font
\usepackage{courier}        % selects Courier as typewriter font
\usepackage{fix-cm}
\usepackage{BOONDOX-calo}

\numberwithin{equation}{section}

\theoremstyle{definition}
\newtheorem{dfn}{Definition}[section]
\newtheorem{example}{Example}[section]

\theoremstyle{definition}
\newtheorem{remark}{Remark}[section]
\newtheorem{notation}{Notation}[section]
\newtheorem{prop}{Proposition}[section]
\newtheorem{thm}[prop]{Theorem}

\newtheorem*{thm*}{Theorem}

\theoremstyle{theorem}

\renewcommand{\fam}[1]{\mathsf{#1}}

\newcommand{\C}{\mathbb{C}}
\newcommand{\R}{\mathbb{R}}
\newcommand{\Z}{\mathbb{Z}}
\newcommand{\Q}{\mathbb{Q}}

\newcommand{\F}{\mathbb{F}}
\renewcommand{\P}{\mathbb{P}}

\newcommand{\Com}{\mathsf{Com}}
\newcommand{\Mod}{\mathsf{Mod}}
\newcommand{\gMod}{\underline{\mathsf{Mod}}}

\newcommand{\Set}{\mathsf{Set}}
\newcommand{\Art}{\mathsf{Art}}

\newcommand{\azu}{\mathtt{azu}}
\newcommand{\ram}{\mathtt{ram}}

\newcommand{\Simp}{\mathsf{Irr}}
\newcommand{\gSimp}{\mathsf{SMod}}
\newcommand{\gSpec}{\mathbf{Spec}}
\newcommand{\Frac}{\mathrm{Q}}
\newcommand{\Spec}{\mathrm{Spec}}
\newcommand{\Specm}{\mathrm{Specm}}
\newcommand{\Pic}{\mathrm{Pic}}

\newcommand{\Xscr}{\mathbcal{X}}
\newcommand{\Lscr}{\mathcal{L}}

\newcommand{\Max}{\mathrm{Max}}

\newcommand{\oo}{\mathfrak{o}}
\newcommand{\pp}{\mathfrak{p}}
\newcommand{\qq}{\mathfrak{q}}
\newcommand{\pd}{\mathtt{P}}

\newcommand{\dd}{\mathtt{D}}
\newcommand{\ee}{\mathtt{E}}
\newcommand{\ff}{\mathtt{F}}

\newcommand{\af}{\mathfrak{a}}
\newcommand{\mm}{\mathfrak{m}}

\newcommand{\Mm}{\mathfrak{M}}

\newcommand{\eb}{\boldsymbol{e}}
\newcommand{\fb}{\boldsymbol{f}}

\newcommand{\OO}{\mathcal{O}}

\renewcommand{\AA}{\mathcal{A}}
\newcommand{\BB}{\mathcal{B}}
\newcommand{\MM}{\mathcal{M}}

\newcommand{\cent}{\mathfrak{Z}}

\newcommand{\alphabf}{\boldsymbol{\alpha}}

\newcommand{\rhobf}{\boldsymbol{\rho}}
\newcommand{\xibf}{\boldsymbol{\xi}}

\newcommand{\CEnd}{\mathcal{E\!n\!d}}
\newcommand{\CExt}{\mathcal{Ex\!t}}

\newcommand{\Def}{\underline{\mathtt{Def}}}
\newcommand{\TE}{\mathbf{T}}
\newcommand{\TG}{\Gamma}
\newcommand{\pf}{\mathsf{p}}

\newcommand{\ooo}{\hat{\mathfrak{o}}}

\DeclareMathOperator{\id}{id}

\DeclareMathOperator{\rk}{rk}
\DeclareMathOperator{\Ann}{Ann}
\DeclareMathOperator{\Aut}{Aut}
\DeclareMathOperator{\End}{End}
\DeclareMathOperator{\Hom}{Hom}
\DeclareMathOperator{\Ext}{Ext}

\DeclareMathOperator{\im}{im}
\DeclareMathOperator{\Der}{\mathrm{Der}}

\setcounter{tocdepth}{2}

\setlength{\absleftindent}{0mm}
\setlength{\absparindent}{-10mm}

\setlength{\abstitleskip}{-1em}

\begin{document}
\pretitle{\begin{flushleft}\Large \scshape
} 
\posttitle{\par\end{flushleft}%\rule[8mm]{\textwidth}{0.5mm}\vspace{-1cm}\\
\rule[8mm]{\textwidth}{0.1mm}
}
\preauthor{\begin{flushleft}\large \scshape
\vspace{-5mm}}
\postauthor{\end{flushleft}\vspace{-8mm}}                                                                                                                                                                                                                                                                                                                                                                                                                                                           
\title{Arithmetic Geometry of non-commutative Spaces with Large Centres}
\author{\large Daniel Larsson }
\date{}

\maketitle
\vspace{-0.5cm}
\begin{abstract}  
\vspace{0.2cm}
\noindent This paper introduces arithmetic geometry for polynomial identity algebras using non-commutative (formal) deformation theory. Since formal deformation theory is inherently local the arithmetic and geometric results that follow give local information that is not visible when looking at the objects from a commutative angle. For instance, it is a precise meaning to be given to two things being ``infinitesimally close'', something being obscured from view when restricting only to a commutative algebraic study. A Platonesque way of looking at this is that the commutative world is a ``shadow'' of a more inclusive non-commutative universe.    

The present paper aims at laying the foundation for further and deeper study of arithmetic and geometry using non-commutative geometry and non-commutative deformation theory. 
\blfootnote{ \textit{e-mail:} \href{mailto:daniel.larsson@usn.no}{ daniel.larsson@usn.no}} 
\end{abstract}
\section{Introduction}
Non-commutative algebra has been present in number theory for a long time (e.g., quaternion algebras and central simple algebras) and as such the approach in this paper is definitely not novel. The novelty presented comes from the use of \emph{non-commutative} deformation theory introduced by O.A. Laudal in \cite{Laudal_def}. 

In addition, deformation theory in arithmetic is also not a novelty. For instance, deformations of Galois groups and deformations of Galois covers of curves in characteristic $p$ are but two examples of prominence. Deformation theory brings out, by its very definition, ``local'' information and the extension to non-commutative bases further brings out local information that is not visible to a commutative eye. We will see several examples of this later. In fact, ``locality'' is a fundamental aspect of both arithmetic and geometry. 

There are not many papers that are dealing with non-commutative algebraic geometry in the context of arithmetic geometry as far as I'm aware. There is T. Borek's version of non-commutative Arakelov theory \cite{Borek, Borek2} and the recent paper \cite{ChanIngalls_ord_arit} by D. Chan and C. Ingalls. Both of these papers define non-commutative algebraic spaces (schemes) differently than what I do. In fact, the starting point is M. Artin and J. Zhang's notion of ``non-commutative projective schemes''. This approach is global by its very definition. On the other hand, Chan--Ingalls use local (\'etale) information coming from orders in central simple algebras over the function field of a commutative scheme. These orders are then the non-commutative schemes (in the sense of Artin--Zhang). However, none of the papers \cite{Borek, Borek2, ChanIngalls_ord_arit} use non-commutative deformation theory.  

\begin{remark}I would be remiss if I did not remark that A. Connes and his collaborators M. Marcolli and K. Consani also studies arithmetic in the context of non-commutative geometry. However, this version is not ``algebraic''. See Connes' webpage for more information. 
\end{remark}

\subsection{The idea}
Let me illustrate the basic idea with the following (not so) hypothetical situation. Let $X$ be a scheme over a field $k$ and let $\mathfrak{G}$ be a group acting on $X$. In general, the quotient $X/\mathfrak{G}$ does not exist as a scheme (but almost always as an algebraic stack) unless, for instance, $X$ is quasi-projective and $\mathfrak{G}$ finite. However, even for finite quotients of quasi-projective schemes, it is sometimes beneficial to introduce stack structures (e.g., at singularities or branch points) on $X/\mathfrak{G}$. For instance,  if $X$ is a curve, the result is often called a ``stacky curve''. Assume for simplicity that $X=\Spec(A)$ for some $k$-algebra $A$ and that $\mathfrak{G}$ is finite. 

Let $k\subseteq k'$. Then there is a one-to-one correspondence between the orbits of $X(k')$ under $\mathfrak{G}$ and the elements of $\Spec(A^\mathfrak{G})(k')$. Without being precise and formal in the least, let $\pf$ be a $k'$-point on $X$ and let $\boldsymbol{orb}(\pf)=\Spec(A/I)$. Then $A/I$ is both a $\mathfrak{G}$-module and an $A$-module; hence an $A[\mathfrak{G}]$-module. Therefore, we have an identification of simple $A[\mathfrak{G}]$-modules over $k'$ and points in $\Spec(A^\mathfrak{G})(k')$, i.e., the orbits. Observe that $A[\mathfrak{G}]$ is a commutative ring. So far, nothing remarkable. 

However, instead of looking at $A[\mathfrak{G}]$ we can look at the crossed product algebra (or skew group ring) $A\langle \mathfrak{G}\rangle$ (sometimes denoted $A\ast \mathfrak{G}$ or $A\# \mathfrak{G}$) which, as an abelian group, is the same as $A[\mathfrak{G}]$ but with multiplication defined by $\sigma x=\sigma(x)\sigma$, for $\sigma\in \mathfrak{G}$ and $x\in A$. This is obviously a non-commutative ring. It is quite easy to see that there is still a one-to-one correspondence between points of $\Spec(A^\mathfrak{G})(k')$ and simple $A\langle \mathfrak{G}\rangle$-modules over $k'$. Put, for simplicity, $B:=A\langle \mathfrak{G}\rangle$.

Now, points on $X(k')$ with non-trivial stabiliser (i.e., points where $\sigma x=x$ for all $\sigma\in \mathfrak{h}\subseteq \mathfrak{G}$; the group $\mathfrak{h}$ is the \emph{stabiliser group} or \emph{isotropy group}) are points of special interest since this is where possible ``stacky-ness'' comes in. In fact, these are the ramification points of the cover $X\twoheadrightarrow X/\mathfrak{G}$. It is a classical topic to study these ramification points and a lot of very interesting things can happen at these points. 

There is a canonical isomorphism between the tangent space at a point $\pf\in X/\mathfrak{G}(k')$ and the vector space $\Ext^1_{B}(M, M)$, where $M$ is the, to $\pf$, associated $A\langle \mathfrak{G}\rangle$-module. In particular, if $\dim(\Ext^1_{B}(M, M))=\dim(X)=\dim(X/\mathfrak{G})$ if $\pf$ is a non-singular point. If $\pf$ is singular
$$\dim(X)=\dim(X/\mathfrak{\mathfrak{G}})\leq\dim(\Ext^1_{B}(M, M)).$$However, when $A$ is non-commutative the $\Ext$-dimension can be greater than what can be expected just by looking from a commutative perspective. In other words, we can have
$$\dim(\Ext^1_{A[\mathfrak{G}]}(M, M))<\dim(\Ext^1_{A\langle \mathfrak{G}\rangle}(M, M)).$$This phenomenon appears only at points of ramification (but not always). The dimension is maximal when the ramification is wild. 

The story does not end there. Assume that $\pf$ and $\mathsf{q}$ are two ramification points on $X/\mathfrak{G}$ with associated $B$-modules $M$ and $N$. Then, as modules over $A[\mathfrak{G}]$ we have $\Ext^1_{A[\mathfrak{G}]}(M, N)=0$. However, as modules over $B=A\langle \mathfrak{G}\rangle$ we always have $\Ext^1_B(M, N)\neq 0$. The interpretation here is that the points $\pf$ and $\mathsf{q}$ lie ``infinitesimally close'' and that $\Ext^1_{B}(M, N)$ is the ``tangent space between $M$ and $N$''. This is not symmetric, we might have 
$$\dim(\Ext^1_{B}(M, N))\neq \dim(\Ext^1_{B}(N, M)).$$ In other words, ``closeness cares about direction''; $\pf$ can be ``closer'' to $\mathsf{q}$ than $\mathsf{q}$ is to $\pf$.  We will see an example of this in the last section. 

Therefore, we can view $\Ext$ as a measure of how ``ramified'' something is, a statement that will be made more precise later in the paper. 

The ring $B=A\langle \mathfrak{G}\rangle$ is an example of a \emph{non-commutative crepant resolution} of $A^\mathfrak{G}$ (see for instance, \cite{StaffordvandenBergh}). Namely, even if $X/\mathfrak{G}=\Spec(A^\mathfrak{G})$ have singularities, the ring $B$ has natural regularity properties as a non-commutative ring (see section \ref{sec:noncomquotient}). Also, T. Stafford and M. van den Bergh proves in \cite{StaffordvandenBergh} that if $A^\mathfrak{G}$ has a non-commutative \emph{crepant} resolution, $\Spec(A^\mathfrak{G})$ has rational singularities. Regardless of the crepant-ness, it is natural to view $B$ as a non-commutative resolution of $\Spec(A^\mathfrak{G})$ as $B$ in any case retains many regularity properties.

\subsection{Enter deformation theory}
Let $\Mod(A)$ be the category of left $A$-modules, $M\in\Mod(A)$ and let $\mathrm{Def}_M$ be a deformation functor of $M$ from the category $\mathsf{C}$ of local, complete, artinian rings to the category of sets. Then, under some mild conditions (see section \ref{sec:deformation}), $\mathrm{Def}_M$ has a pro-representing hull $\hat H$ that is constructed using the tangent space $\Ext^1_A(M, M)$ and matric Massey products (see section \ref{sec:deformation} or \cite{EriksenLaudalSiqveland}, for more details). In other words, $\hat H$ is the completion of the local ring of a moduli space of $A$-modules. Of course, such a moduli space might not exist. However, $\hat H$ always does. 

Let $\fam{M}:=\{M_1, M_2, \dots, M_n\}$ be a family of $A$-modules. Extending the base category $\mathsf{C}$ to non-commutative rings we can define a deformation functor $\mathrm{Def}_\fam{M}^\mathrm{nc}$ of the \emph{family} $\fam{M}$ encoding the \emph{simultaneous} deformations of the modules $M_i$. This functor also have a pro-representing hull, but now this is a matrix ring $(\hat H_{ij})$ with entries being quotients of non-commutative formal power series rings along the diagonal, and bi-modules off-diagonal. The diagonal comes from the spaces $\Ext^1_A(M_i, M_i)$ and the entries off the diagonal come from $\Ext_A^1(M_i, M_j)$, $i\neq j$. As in the commutative case in the previous paragraph, there is an algorithm to compute $(\hat H_{ij})$ (once again, see \cite{EriksenLaudalSiqveland}).

From $(\hat H_{ij})$ one constructs the versal family
$$\hat\rho: A\to \hat\OO_\fam{M},$$with $\hat\OO_\fam{M}$ the matrix ring
$$\hat\OO_\fam{M}:=\big(\Hom_k(M_i, M_j)\otimes_k \hat H_{ij}\big).$$ For more details see section \ref{sec:deformation}.

It is, ultimately, the ring $\hat\OO_\fam{M}$ that is the local object that we're after and that includes all the non-commutative information. Unfortunately, this ring is almost always extremely difficult to compute explicitly. We will see a couple of examples where it is actually possible. Easier, but still very hard, is it to compute the tangent spaces $\Ext_A^1(M_i, M_j)$ (which is the first step in computing $\hat\OO$). 

Let us continue the meta-example above. If $X\twoheadrightarrow X/\mathfrak{G}$ is a singular $\mathfrak{G}$-cover with singular point $\pf$. This is also a branch point. Let $M$ be the orbit of $\pf$, considered as an $A\langle\mathfrak{G}\rangle$-module. It turns out that $\hat\OO_M$ captures a lot of the ramification properties of the cover $X/\mathfrak{G}$ at $\pf$. For instance, in all examples I know of, the tangent space $\hat\OO_M$ is significantly different when the ramification is wild. I'm convinced that this is a general phenomenon but I've not studied it to the point where I can come up with a specific result, other than in special cases, or make a general conjecture. The examples that are computed in the paper clearly show this phenomenon. 

As a consequence, the ring object $\hat\OO_M$ will almost certainly include information concerning wild ramification that is not visible through commutative means. In fact, wild ramification of quotient singularities of arithmetic surfaces has attracted some interest in the last decade (see for instance \cite{ItoSchroer, Kiraly, Lorenzini_wild_surface}) and it is my hope that the introduction of $\hat\OO$ will shed new light on wildly ramified singularities. This seems quite natural since $A\langle\mathfrak{G}\rangle$ can be viewed as a non-commutative resolution\footnote{These are almost certainly not \emph{crepant} resolutions. There seems to be issues concerning crepant resolutions (even in the commutative case) in mixed characteristic.} of $A^\mathfrak{G}$.

\subsection{Polynomial identity algebras}
Let $R$ be a commutative ring and let $P(\mathbf{x})$ be an element in the free algebra $ \Z\langle \mathbf{x}\rangle = \Z\langle x_1, x_2, \dots, x_n\rangle$. A \emph{polynomial identity algebra} (\emph{PI-algebra}) is an $R$-algebra $A$, if there is an $n$ and a $P(\mathbf{x})\in\Z\langle \mathbf{x}\rangle$ as above with $P(a_1, a_2, \dots, a_n)=0$ for all $n$-tuples $(a_1, a_2, \dots, a_n)\in A^n$. 

Clearly, commutative algebras are polynomial identity algebras. Other, less trivial examples include: Azumaya algebras (therefore, also central simple algebras), orders in Azumaya algebras and algebras that are finite as modules over their centres. In particular, $A\langle \mathfrak{G}\rangle$ when $\mathfrak{G}$ is finite, is finite as a module over its centre $A^\mathfrak{G}$ and hence a PI-algebra. Therefore, non-commutative resolutions of singularities as (loosely defined) above are PI-algebras. 

Polynomial identity algebras enjoy some remarkable properties and are very similar to commutative algebras. PI-algebras where extensively studied in the 70's and 80's and as a consequence a lot is known about these algebras. For instance, many of these algebras have good homological properties (regularity, Cohen--Macaulay-ness, e.t.c.). It is impossible to give an exhaustive list of papers dealing with these things but the (quite old) \cite{StaffordZhang} might give some insight. A more modern perspective can be found in \cite{LeBruyn} where PI-algebras are studied using quiver techniques. In fact, the some of the techniques used in \cite{LeBruyn} can probably be adapted for use in studying the ring object $\hat\OO$.   

Since this paper deals with algebras with ``large centres'', meaning exactly that the algebras are finite over its centre, all the machinery of PI-algebras are at our disposal. Of course we will only use a (very!) small part of that machinery. The case where the centre is not ``large'', or more generally, when the algebras are not PI-algebras, is certainly very interesting, but also significantly more difficult to study from an arithmetic-geometric perspective. This is, however, something that should be investigated in the future. 
 
Let $A$ be a PI-algebra with centre $\cent(A)$ and let $\mm$ be a maximal ideal in $\cent(A)$. Then the fibre $A\otimes_{\cent(A)}k(\mm)$ over $\mm$ is either a central simple algebra, or not. The subset of all $\mm$ where $A\otimes_{\cent(A)}k(\mm)$ is a central simple algebra is called the \emph{Azumaya locus}, $\azu(A)$, and its complement, $\ram(A)$, is the support of a Cartier divisor, \emph{ramification locus}. 

The point is the following. Let $\mm\in\azu(A)$. Then $\mm$ is still maximal as an ideal in $A$. However, if $\mm\in\ram(A)$, then $\mm$ splits into disjoint maximal ideals $\mm_i$ inside $A$. In addition, every ideal in $A$ intersects the centre in a unique ideal and, furthermore, if the ideal is maximal so is the intersected ideal. Hence, over $\azu(A)$ there is a one-to-one correspondence between maximal ideals in $\cent(A)$ and maximal ideals in $A$. As a consequence, over $\azu(A)$, $A$ is geometrically the ``same'' as $\cent(A)$.

This indicates that the interesting part of $\cent(A)$ is the ramification locus, and this is indeed the case. Let $\mm\in\ram(A)$. Then, for some $n\geq 2$, $\mm=\mm_1\mm_2\cdots\mm_n$ as ideals in $A$. Let $M_i:=A/\mm_i$, considered as left $A$-modules. Then $\Ext_A^1(M_i, M_j)\neq 0$. In fact, we have the equivalence
$$\Ext^1_A(M_i, M_j)\neq 0\iff \mm_i\cap\cent(A)=\mm_j\cap \cent(A).$$This is the content of M\"uller's theorem (see section \ref{sec:PI}).  Expressed differently, in geometric terms, the points in $A$ over a point $\mm\in\ram(A)$ are infinitesimally close. Here is then where the ring object $\hat\OO$ enters non-trivially.  

Allowing myself to make a definite proposition, the way to think about the above is to view $A$ as a ``non-commutative thickening'' of $\cent(A)$ or, as suggested in the abstract, that $\cent(A)$ is a commutative ``shadow'' of $A$, referring to Plato's cave allegory.

\subsection{Organisation}
The paper is organised as follows. Section \ref{sec:naive_nc_simp} introduces the a first tentative notion of non-commutative space that we will use. This will be definition will expanded in section \ref{sec:nc_alg_space}. Section \ref{sec:deformation} gives a short survey on non-commutative deformation theory. This section can probably be skimmed and referred to as needed. It includes the definition of the ring objects $\hat\OO$. 

The next section, section \ref{sec:nc_alg_space}, defines non-commutative algebraic spaces, which will be the main underlying object in all that follows. Following this is the important section \ref{sec:points} that introduces rational points on non-commutative algebraic spaces. As said above, PI-algebras play a central role in this paper and sections \ref{sec:PI} and \ref{sec:RationalPI} discusses the non-commutative geometry of these algebras, including rational points and tangent spaces. In addition we define what we will mean by an invertible module on a non-commutative algebraic space. 

The next part of the paper is devoted to non-commutative Diophantine Geometry. This section starts of with a recollection of height functions and then we define three types of non-commutative versions: one ``central'', one that is representation theoretic and one that is ``infinitesimal'', taking into consideration the tangent structures of the rational points. 

Section \ref{sec:arithPI-alg} starts by extending the algebras and spaces to ``infinite fibres'', mimicking the technique used in Arakelov geometry. We will not develop a fully fledged Arakelov theory involving metrised Hermitian vector bundles and Chow groups e.t.c.. However, the extension to infinite fibres has the great benefit of including the geometric and arithmetic properties into one coherent object. In this way, the dual nature (geometric/arithmetic) of PI-algebras becomes immediately visible. This section introduces, line sheaves, Cartier and Weil divisors and a rudimentary intersection theory of divisors. This intersection theory is truly noncommutative, and so is not easily computable. Still, it is, in my opinion, a quite natural extension to non-commutative algebraic spaces with large centres.

Finally, in section \ref{sec:example} we look at three examples. The first is the quotient $\mathbb{A}^2/\mu_3$ over an order $\Z[\zeta_3]\subseteq\oo$ in a number field defined by $x\mapsto \zeta_3 x$ and $y\mapsto \zeta_3 y$, where $\zeta_3$ is a third root of unity. As is well-known 
$$\mathbb{A}^2/\mu_3\simeq \Spec\left(\frac{\oo[u, v, w]}{(w^3-uv)}\right),$$at least over fibres where $3$ is invertible. We compute the tangent spaces over all ramification points and compute $\hat\OO$, at least up to obstructions of order two and we give the non-commutative thickening of $\mathbb{A}^2/\mu_3$. We also discuss some arithmetic properties of this thickening such as divisors and heights. As mentioned before, computations here are difficult so, as to not overstate this paper's importance and let it expand beyond reasonable bounds, more difficult computations will have to wait for another time. 

The second example is a family of degree-two thickenings of the integers. We look at the degree-two cover $\Z[\sqrt{d}]/\Z$, where $d\equiv 2, 3\,(\mathrm{mod} 4)$ (and where the integer $d$ is assumed square-free). The quotient of $\Spec(\Z[\sqrt{d}])$ by its Galois group $\mathfrak{G}=\Z/2$ is $\Spec(\Z)$ and so the orbits are in one-to-one correspondence with primes in $\Z$. Therefore, it is reasonable to look at the simple $\Z[\sqrt{d}]\langle \mathfrak{G}\rangle$-modules, which are also in a one-to-one correspondence with $\Spec(\Z)$.  From this we can construct a family of non-commutative spaces, parametrised by $d$, which can be viewed as non-commutative thickenings of $\Z$. In fact, the tangent structure is trivial over all unramified points, but over the ramified ones the ring object $\hat\OO$ is non-trivial. In addition, in the case where the ramification is wild (i.e., at the prime 2), the ring is also obstructed in the sense that the underlying deformations are obstructed.  

In the last example we consider an order over an arithmetic surface. This is algebra is not constructed as a quotient of a scheme by a group. Instead, this is constructed by considering a quantum plane over the surface and then factoring out by an ideal. The resulting tangent structure turns out to be quite interesting. Indeed, we are in the situation alluded to above, where a point ``$\pf$ is closer to $\mathsf{q}$ than $\mathsf{q}$ is to $\pf$''. The way this phenomenon manifests itself is that, in this case, $\Ext^1_{A}(\pf, \mathsf{q})=k^2$, but $\Ext^1_{A}(\mathsf{q}, \pf)=k$. An arithmetic study of this situation is very interesting and is probably worth a paper of its own. 

\subsubsection*{Two final remarks}
\begin{itemize}
	\item I have made the conscious decision to not be consistent in notation and language. The biggest crime is in using the word \emph{centre} of an algebra, both as an algebra itself, but also as a commutative scheme. Therefore, the following typical phrase ``let $\mm$ be a point on $\cent(A)$'' will appear frequently. But not only that: we will use ``maximal ideal'' and ``point'' interchangeably. Later we will also view structure morphisms of representations as ``points''. I will make the brazen assumption that this will not cause the reader too much headache. 
	\item We will switch between global constructions and affine construction rather freely. Where the global situation (i.e., where the base is a scheme) is not a hindrance we will use this. However, at certain points, using algebras over general base schemes, is notationally unwieldy and often obscures the underlying idea by introducing unnecessary complexity in language. In those cases, we will unabashedly work over affine patches. I'm reasonably certain that everything can be globalised straightforwardly, or at least without too much effort.
\end{itemize}

\subsubsection*{Acknowledgements} 

I would like to thank (as always) Arnfinn Laudal who has been constantly encouraging me through many years, being a formidable adversary in the game of enduring my wild ideas, even when he didn't find them particularly interesting. His advice and stamina have been tremendous assets to me.  In addition, I want to thank Arvid Siqveland for helping me make sense of the $\Ext$-computations in section \ref{sec:z3-quotient}. 

\subsection*{Notation}
I will adhere to the following notation throughout.
\begin{itemize}
	\item For a commutative algebra $A$ we denote $\Com(A)$ the category of commutative $A$-algebras. 
	\item For a general algebra (not necessarily commutative) $\Mod(A)$ denotes the groupoid of left $A$-modules up to isomorphism.
	\item The word ``ideal'' always means \emph{2-sided} ideal.
	\item The notation $\Max(A)$ denotes the \emph{set} of (2-sided) maximal ideals, while $\Specm(A)$ denotes the maximal spectrum of $A$, i.e., $\Max(A)$ together with the Zariski--Jacobson topology (see section \ref{sec:naive_nc}). 
	\item  All modules are \emph{left} modules unless otherwise explicitly specified.
	\item $\cent(A)$ denotes the centre of $A$. 
	\item For $\pp$ a prime in $A$, $k(\pp)$ denotes the residue class field of $\pp$. 
	\item We will often identify $$\mm\in\Max(A)\longleftrightarrow k(\mm)\longleftrightarrow\ker\rho\longleftrightarrow M,$$ where $\rho$ is the structure morphism $\rho: A\to \End_k(M)$. 
	\item Abelian sheaves are denoted with scripted letters.
	\item All schemes and algebras are noetherian. Schemes are also assumed to be separated. 
\end{itemize}

\section{Non-commutative algebraic spaces}\label{sec:naive_nc}
\subsection{Non-commutative spaces}\label{sec:naive_nc_simp}

Let $X$ be a scheme and let $\AA$ be a coherent $\OO_X$-algebra, with $\OO_X\subseteq \cent(\AA)$.
\begin{dfn}\label{def:global_simp}The $\AA$-module $\MM$, with structure morphism $$\boldsymbol{\rho}: \AA\to \CEnd_{\OO_X}(\MM),$$ is \emph{simple} if $\MM$ has no $\AA$-submodules. This implies that $\MM$ is simple on the stalks, i.e., 
$$\MM_\pf:=\MM\otimes_{\OO_X}\OO_{X, \pf}$$ is simple as $\AA_\pf:=\AA\otimes_{\OO_X}\OO_{X, \pf}$-module. From this it follows that the fibres are also simple.  
\end{dfn}
Denote by $\Mod(\AA)$ the \emph{set} (or \emph{groupoid})  of all $\AA$-modules up to isomorphism, and $\Mod_n(\AA)$ the set of rank-$n$ such. In addition, put $\gSimp_n(\AA)$ as the set (or groupoid) of isoclasses of simple $\AA$-modules, locally free of rank $n$ (over $\OO_X$) and 
$$\gSimp(\AA):=\bigcup_n\gSimp_n(\AA),$$ the union over all $n$. Similarly,

\begin{remark}\label{rem:globalsimp}
 By a theorem of M. Artin (later extended by C. Procesi) the set $\Simp_n(A)$ can be endowed with the structure of a \emph{commutative} affine scheme, at least over a field of characteristic zero. 
\end{remark}

There is a natural topology on $\Mod(\AA)$, namely the Zariski--Jacobson topology $T_{\mathrm{ZJ}}$. This is the topology generated by the distinguished opens
\begin{align}\label{eq:module_vs_anni}
\begin{split}
D_f&:=\Big\{\MM\in\Mod(\AA)\,\,\,\big\vert\,\,\, \mathrm{Ann}_f(\MM)= 0, \,\, f\in \AA\Big\}\\
&=\Big\{\MM\in\Mod(\AA)  \,\,\,\big\vert\,\,\, f\notin\mathrm{Ann}_{\AA}(\MM)\Big\} ,
\end{split}
\end{align}where $\mathrm{Ann}_f(\MM)$ is the $f$-annihilator of $\MM$, i.e., the submodule of elements $m\in \MM$ such that $fm = 0$, and $\mathrm{Ann}_\AA(\MM)$ is the $\AA$-annihilator, i.e., the ideal in $\AA$ of all $a\in\AA$ such that $a \MM=0$. Observe that we can, and sometimes will, identify $\MM$ with its annihilator ideal $\mathrm{Ann}_\AA(\MM)$ in $\AA$. In addition, observe that
$$\ker\!\Big(\AA\xrightarrow{\rho}\CEnd_{\OO_X}\!(\MM)\Big)\subseteq \mathrm{Ann}_\AA(\MM),$$where $\rho$ is the structure morphism of the $\AA$-module $\MM$. 

We have that $D_f\cap D_g=D_{fg}$ and $D_f\cap D_g=D_g\cap D_f$, so $D_{fg}=D_{gf}$. When $\AA$ is commutative we get back the Zariski topology on $\gSpec(\AA)$ (the global spectrum).  

\begin{remark}\label{rem:rank_module_X}
If $X$ is an $S$-scheme, we will assume that all $\AA$-modules are of finite rank as $\OO_S$-modules. In other words, if $f: X\to S$ is the structure morphism, we assume that $f_\ast\MM$ is a locally free $f_\ast\AA$-module of finite rank on $S$. So, for instance, if $X=\Spec(B)$, $S=\Spec(K)$ (for $K$ a field) and $A$ a $B$-algebra, then an $A$-module $M$ over $X=\Spec(B)$, needs to be a finite-dimensional $K$-vector space and where $A$ acts on $M$ as a $K$-algebra. 
\end{remark}

Now, let 
$$\mathsf{M}:=\big\{\MM_1, \MM_2, \dots, \MM_\ell\big\}$$ be a family of coherent $\AA$-modules such that 
$$\mathrm{rk}_{\OO_X}\!\big(\CExt^1_\AA(\MM_i, \MM_j)\big)<\infty,\quad\text{for all $1\leq i, j\leq \ell$}.$$ The family $\mathsf{M}$ forms a finite set of vertices $$\big\{V_i\mid 1\leq i\leq \ell, \,\,\, \MM_i\longleftrightarrow V_i\big\}$$ in a family of graphs $\TG^m_{\mathsf{M}}$, in which there are $n$ directed edges from  $V_i$ to $V_j$ if $$\mathrm{rk}_{\OO_X}\!\big(\CExt^m_\AA(\MM_i, \MM_j)\big)=n.$$ Observe that there is not necessarily any symmetry in shifting places of $\MM_i$ and $\MM_j$.  We also form the (disjoint) union 
$$\TG^\bullet_\mathsf{M}:=\bigsqcup_{m\geq 0} \TG^m_\mathsf{M}$$ calling this the \emph{augmented tangent space graph at $\mathsf{M}$}, and where $\TG^m_\mathsf{M}$ is the \emph{$m$-th layer} of $\TG^\bullet_\mathsf{M}$. The first layer is called the \emph{tangent space graph} of $\mathsf{M}$.

 In addition, we say that the space $(\TE_{\fam{M}}^m)_{ij}:=\CExt^m_\AA(\MM_i,\MM_j)$ is the \emph{stalk} of $\TG^m_\mathsf{M}$ at $(\MM_i,\MM_j)$.  The set of stalks of $\TG_\mathsf{M}^m$, which we denote $\TE_{\mathsf{M}}^m$, is the \emph{$m$-th layer tangent space}; when $m=1$, we simply say the \emph{tangent space} of $\mathsf{M}$, denoted $\TE_\mathsf{M}$. The total set of stalks is, for obvious reasons, denoted $\TE^\bullet_\mathsf{M}$, and called the \emph{augmented tangent space} of $\mathsf{M}$.    
 
Finally, let $S/R$ be an arbitrary ring extension of commutative rings and let $M$ and $N$ be free of finite rank over $R$. Then the isomorphism (e.g., \cite[Lemma 7.4]{Lam})
$$\Hom_{A\otimes_R S}(M\otimes_R S, N\otimes_R S)\simeq \Hom_A(M,N)\otimes_R S,$$ implies that 
$$\Ext^\bullet_{A\otimes_R S}(M\otimes_R S, N\otimes_R S)\simeq \Ext^\bullet_A(M,N)\otimes_R S.$$From this follows that 
$$\rk_S\!\big(\Ext^\bullet_{A\otimes_R S}(M\otimes_R S, N\otimes_R S)\big)=\rk_R\!\big(\Ext^\bullet_A(M,N)\big),$$so the dimension of $\Ext^1_A(M,N)$ is constant for base extensions. Therefore, the dimensions in the augmented tangent space $\TE^\bullet_{(M,N)}$ are also constant under base change. Clearly, this globalises. 
\subsection{Non-commutative deformation theory of modules}\label{sec:deformation}

Let $A$ be a (not necessarily commutative) $k$-algebra, where $k$ is a commutative ring, and let $\Mod(A)$ be a $k$-linear abelian category of \emph{left} $A$-modules. We recall that $k$-linear means that every $\Hom$-set is a $k$-module. 

\begin{dfn}Let $\Lambda$ be a $k$-algebra. The category $\Mod(A)_\Lambda$ of \emph{right $\Lambda$-objects} is the category of pairs $(X, \rho)$ where $X\in \Mod(A)$ and $\rho: \Lambda\to \End(X)$ and where $\rho(\Lambda)$ acts on the \textit{right} on $X$. The morphisms are the obvious commutative diagrams. 

The $\Lambda$-object $(X, \rho)$ is \emph{$\Lambda$-flat} if the functor
$$X\otimes_\Lambda - :\Mod(\Lambda)\to \Mod(A)_\Lambda$$ is exact.  
\end{dfn} 

Let $\mathsf{art}_r$, $r>0$, be the category whose objects are morphisms 
$$k^r\to \Lambda\xrightarrow{\alpha}k^r, \quad \Lambda\in \Art(k),$$ such that the composition is the identity on $k^r$ and such that $J:=\ker(\alpha)$ is nilpotent. Morphisms are the obvious commutative diagrams. 

If $\{e_1, \dots, e_r\}$ are the idempotents of $k^r$ then we put $\Lambda_{ij}:=e_i\Lambda e_j$. The diagonal consists of subalgebras of $\Lambda$ and the entries off the diagonal are $\Lambda$-bimodules. Notice that $\alpha(\Lambda_{ij})=\delta_{ij}k$ (Kronecker's $\delta$-function). 

We define $\widehat{\mathsf{art}}_r$ to be the category of $r$-pointed pro-objects of $\mathsf{art}_r$. In other words, an object $S$ in $\widehat{\mathsf{art}}_r$ is a projective limit
$$S=\varprojlim_n S/J^n, \quad S/J^n\in \mathsf{art}_r.$$Here $J$ is the kernel of the morphism $S\to k^r$ ($S$ is by definition $r$-pointed).
\begin{dfn}
Let $\fam{M}:=\{M_1, M_2, \dots, M_r\}$ be a family of (left) $A$-modules.  Put $A_\Lambda:=A\otimes_k \Lambda$. 
\begin{itemize}
	\item[(i)] Then a \emph{lifting of $\fam{M}$ to $(\Lambda, \rho)$} is an $A_\Lambda$-module $\fam{M}_\Lambda$ that is $\Lambda$-flat, i.e., that 
	the functor
	$$\fam{M}_\Lambda\otimes_\Lambda - \,\,:  \Mod(\Lambda)\to \Mod(A)_\Lambda$$ is exact. 
	
	The flatness implies that, if $M_i$ is free of rank $n$ over $k$, then $M_i\otimes_k\Lambda$ is also free of rank $n$ as a (right) $\Lambda$-module. This means that 
$\fam{M}_\Lambda = \fam{M}\otimes_{k} \Lambda$ as (right) $\Lambda$-module and, in addition,
$$\fam{M}_\Lambda = \fam{M}\otimes_{k} \Lambda=\big(M_i\otimes_{k}\Lambda_{ij}\big)=\bigoplus_{i,j=1}^rM_i\otimes_k\Lambda_{ij}.$$

\item[(ii)] We also require that the special fibre is $\fam{M}$, i.e., that there is an isomorphism
$$f_\Lambda: \fam{M}_\Lambda =\fam{M}\otimes_k\Lambda\xrightarrow{\id\otimes\alpha} \fam{M},$$induced from the morphism $\alpha:\Lambda \to k^r$. 
	\item[(iii)] Two liftings $\fam{M}_\Lambda$ and $\fam{M}_\Lambda'$ are isomorphic if there is an isomorphism $$h: \fam{M}_\Lambda \to\fam{M}_\Lambda'$$ of $A_\Lambda$-modules such that 
$$f_\Lambda' = (h\otimes \id)\circ f_\Lambda.$$
	\item[(iv)] There is a \emph{non-commutative deformation functor} 
$$\Def_{\fam{M}}:\,\, \mathsf{art}_r \to \Set, \quad \Lambda\mapsto \Def_{\fam{M}}(\Lambda),$$where
$$\Def_{\fam{M}}(\Lambda):=\Big\{\text{all liftings of $\fam{M}$ to $\Lambda$, up to isomorphism of liftings}\Big\},$$and where $\Def_{\fam{M}}(k^r)=\{\bullet\}$. 
\end{itemize}
Observe that $\Def_{\fam{M}}(\Lambda)$ is a groupoid. 
 \end{dfn}

Let $\Def$ be any deformation functor. Any $\Lambda\in\mathsf{art}_r$ comes with a fixed injection $i: k^r\to \Lambda$ and so $\Def$ determines a unique element $\bullet_\Lambda:=\Def(\bullet)\in\Def(\Lambda)$.

 In addition, any $\lambda\in\Def(\Lambda)$ reduces to $\bullet$ under the surjection $\alpha:\Lambda\to k^r$. Any $\lambda\in\Def(\Lambda)$ is a \emph{lift} of $\bullet$ to $\Lambda$. The \emph{trivial lift} of $\bullet$ is the element $\bullet_\Lambda\in\Def(\Lambda)$.

We can extend the deformation functor from $\mathsf{art}_r$ to $\widehat{\mathsf{art}}_r$ by putting
$$\Def(\hat\Lambda):=\varprojlim_n \Def(\hat\Lambda/J^n), \quad\hat\Lambda\in\widehat{\mathsf{art}}_r.$$ 

A \emph{pro-couple} for $\Def$ is a pair $(\hat H, \xi)$ with $\hat H\in\widehat{\mathsf{art}}_r$ and $\xi\in\Def(\hat H)$. A morphism of pro-couples $(\hat H_1, \xi_1)$ and $(\hat H_2, \xi_2)$ is (obviously) a morphism $f:  \hat H_1\to \hat H_2$ such that $\Def(f)(\xi_1)=\xi_2$. 

Yoneda's lemma in the present context states that 
$$\Hom(\underline{\mathtt{h}}_{\hat H}, \Def)\xrightarrow{\,\,\simeq\,\,}\Def(\hat H), $$ where $\underline{\mathtt{h}}_{\hat H}:=\Hom(\hat H, -)$. Therefore, any $\xi\in\Def(\hat H)$ gives a unique morphism of functors $f_\xi: \underline{\mathtt{h}}_{\hat H}\to \Def$. 

If $f_\xi$ is an isomorphism of functors, then $\Def$ is \emph{pro-representable} by the \emph{universal pro-couple} $(\hat H, \xi)$. This is unique up to unique isomorphism of pro-couples.  

Let $f_\xi: \underline{\mathtt{h}}_{\hat H}\to \Def$ be a morphism of functors satisfying, for any surjective morphism $\Lambda\to\Lambda'$ in $\mathsf{art}_r$, the property that
$$\underline{\mathtt{h}}_{\hat H}(\Lambda)\to \underline{\mathtt{h}}_{\hat H}(\Lambda')\underset{\Def(\Lambda')}{\times}\Def(\Lambda)$$is a surjective morphism of functors. We then say that $(\hat H, \xi)$ is \emph{versal} and that $\hat H$ is a \emph{pro-representing hull} with \emph{versal family}, $\xi$. 

It is worth pointing out that the above works, word for word, in any $k$-linear abelian tensor category.

\begin{thm} Suppose $k$ is a field and let $\fam{M}=\{M_1, M_2, \dots, M_r\}$ be a finite family of $A$-modules, with $\dim_k(M_i)<\infty$ for all $1\leq i\leq r$. Assume also that
$$\dim_k\big(\Ext^i_A(M_i, M_j)\big)<\infty, \quad  i =1, 2.$$Then there is a pro-representable hull $(\hat H_{ij})$ for the deformation functor $\Def_{\fam{M}}$, with versal family 
$$\hat\OO_{\fam{M}}:=\fam{M}\otimes_{k}\hat H=(M_i\otimes_k \hat H_{ij}).$$ The algebra morphism (the reduction to the special fibre)
$$\hat\OO_{\fam{M}}=\fam{M}\otimes_k \hat H\xrightarrow{\id\otimes\alpha}\fam{M}\otimes_{k}k^r=\fam{M}$$(we identify 
$\fam{M}$ and $\fam{M}\otimes_k k^r=\oplus M_i$) is implicit in the construction. 

Furthermore, there is an algorithm that computes the hull using (matric) Massey products.
\end{thm}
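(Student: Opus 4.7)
The plan is to establish the theorem via a non-commutative version of Schlessinger's criteria, extended from the commutative setting by Laudal. The key structural fact driving everything is that the ``tangent bimodule'' of $\Def_{\fam{M}}$ is controlled by $\Ext^1_A(M_i,M_j)$ and its obstructions by $\Ext^2_A(M_i,M_j)$, while the relations among obstructions arise from matric Massey products on a chosen family of resolutions.

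First, I would verify a non-commutative analogue of Schlessinger's conditions (H1)--(H3) for $\Def_{\fam{M}}$ on $\mathsf{art}_r$. Given a diagram $\Lambda'\to\Lambda\leftarrow\Lambda''$ in $\mathsf{art}_r$ with the right-hand arrow a small surjection (kernel a bimodule square-zero extension), one checks that the canonical map
\[
\Def_{\fam{M}}(\Lambda'\times_\Lambda\Lambda'')\longrightarrow \Def_{\fam{M}}(\Lambda')\times_{\Def_{\fam{M}}(\Lambda)}\Def_{\fam{M}}(\Lambda'')
\]
is surjective (and bijective when $\Lambda=k^r$), using that flatness of a lift is preserved under gluing and that automorphisms of lifts along small extensions are controlled by $\Ext^0=\Hom$. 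Then I would identify the tangent space: evaluating $\Def_{\fam{M}}$ on the $(i,j)$-dual number algebra (the $r\times r$ matrix $k$-algebra $k^r\oplus\epsilon E_{ij}\cdot k$ with $\epsilon^2=0$) produces precisely $\Ext^1_A(M_i,M_j)$. The finite-dimensionality hypothesis on $\Ext^1$ then gives condition (H3).

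Next I would construct $\hat H$ order by order. Choose free resolutions $L_i^\bullet\to M_i$ and a ``formal basis'' dual to $\Ext^1_A(M_i,M_j)$, producing the degree-one piece $\hat H^{(1)}$ as a quotient of the free matrix pro-algebra on these generators in $\widehat{\mathsf{art}}_r$ modulo $J^2$. The lift of $\fam{M}$ to $\hat H^{(1)}$ is then the tautological one using 1-cocycle representatives. To go from $\hat H^{(n)}$ to $\hat H^{(n+1)}$, I would attempt to extend the lift along the small extension $\hat H^{(n+1)}\twoheadrightarrow \hat H^{(n)}$; the obstruction lives in $\bigoplus_{i,j}\Ext^2_A(M_i,M_j)\otimes J^{n+1}/J^{n+2}$ and, via the Yoneda/bar construction, is computed by the matric Massey product of the chosen cocycles corresponding to the generators. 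Imposing these obstructions as relations defines $\hat H^{(n+1)}$ and produces a compatible lift. Passing to the inverse limit yields $\hat H=\varprojlim\hat H^{(n)}\in\widehat{\mathsf{art}}_r$ with versal family $\xi\in\Def_{\fam{M}}(\hat H)$, and the versal module is $\hat\OO_{\fam{M}}=\bigoplus_{i,j}M_i\otimes_k\hat H_{ij}$ by flatness.

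Finally I would check versality: for any small surjection $\Lambda\to\Lambda'$ in $\mathsf{art}_r$ and any commutative square relating a morphism $\hat H\to\Lambda'$ to a lift in $\Def_{\fam{M}}(\Lambda)$, the obstruction to extending the morphism to $\hat H\to\Lambda$ vanishes precisely because the defining relations of $\hat H$ encode \emph{all} Massey product obstructions; this gives the required surjection
\[
\underline{\mathtt{h}}_{\hat H}(\Lambda)\twoheadrightarrow \underline{\mathtt{h}}_{\hat H}(\Lambda')\times_{\Def_{\fam{M}}(\Lambda')}\Def_{\fam{M}}(\Lambda).
\]
The main obstacle, in my view, is the careful bookkeeping that the matric Massey product is well-defined on cohomology classes (i.e., independent of cocycle representatives modulo lower-order relations) and that it genuinely computes the full obstruction in the non-commutative, multi-pointed setting; one has to keep track of the bimodule structure of the off-diagonal entries $\hat H_{ij}$ and the non-symmetry $\Ext^1_A(M_i,M_j)\neq \Ext^1_A(M_j,M_i)$. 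For a complete treatment of this algorithm and its convergence I would refer to Laudal's original construction and to \cite{EriksenLaudalSiqveland}.
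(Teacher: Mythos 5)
The paper does not actually prove this theorem; it is quoted as a background result from Laudal's non-commutative deformation theory (cf.\ \cite{Laudal_def}) and \cite{EriksenLaudalSiqveland}, and the text immediately following the statement simply re-expresses the versal family in terms of structure morphisms without supplying an argument. Your proposal therefore cannot be compared to a proof inside the paper, but it is a faithful and correct high-level reconstruction of exactly the argument those references give: verify the multi-pointed (non-commutative) Schlessinger conditions on $\mathsf{art}_r$, identify the tangent bimodule of $\Def_{\fam{M}}$ on the test objects $k^r\oplus\epsilon\,E_{ij}k$ with $\Ext^1_A(M_i,M_j)$, build $\hat H$ order-by-order in $\widehat{\mathsf{art}}_r$ by killing the obstructions in $\Ext^2_A(M_i,M_j)$ computed via matric Massey products on chosen free resolutions, and finally check the smoothness/versality condition for $\underline{\mathtt{h}}_{\hat H}\to\Def_{\fam{M}}$. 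You also correctly flag the two genuinely delicate points of the construction (well-definedness of the higher Massey products on classes rather than cocycles, and the bookkeeping of the non-symmetric $\hat H_{ij}$ bimodule structure), and you appropriately defer their full treatment to Laudal and to \cite{EriksenLaudalSiqveland}, which is precisely what the paper itself does. One minor remark: the paper's terminology is ``pro-representing hull'' (versal, not pro-representable) and the phrase ``pro-representable hull'' in the statement is a typo; your proof correctly only establishes versality, not pro-representability, which is what is actually claimed.
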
 

We can re-phrase the construction using the structure morphisms of the modules. We begin by noting the isomorphisms 
\begin{align*}
\End_k(\fam{M})\otimes_k\Lambda\simeq\End_\Lambda(\fam{M}_\Lambda)&\simeq \Big(\Hom_k\!\big(M_i, M_j\otimes_k \Lambda_{ij}\big)\Big)\\
&\simeq \big(\Hom_k(M_i, M_j)\otimes_k \Lambda_{ij}\big).
\end{align*} Now, let 
$$\boldsymbol{\varrho}:=\oplus\varrho_i: A\longrightarrow\bigoplus_{i=1}^r\End_k(M_i)=\End_k(\fam{M})\otimes_k k^r=\End_{k^r}\!\!\big(\fam{M}\otimes_k k^r\big)$$ be the structure morphism of the family $\fam{M}$. Then a \emph{lifting of $\boldsymbol{\varrho}$ to $\Lambda$} is an algebra morphism
$$\boldsymbol{\varrho}_\Lambda: A \longrightarrow\End_k(\fam{M})\otimes_k\Lambda=\big(\Hom_k(M_i, M_j)\otimes_k \Lambda_{ij}\big)$$ such that the diagram
$$\xymatrix{&&\End_k(\fam{M})\otimes_{k}\Lambda\ar[d]^{\id\otimes\alpha}\\
A\ar[urr]^{\boldsymbol{\varrho}_\Lambda}\ar[rr]_{\boldsymbol{\varrho}}&&\bigoplus_{1}^r\End_k(M_i)}
$$commutes. 
The vertical morphism is 
 $$\End_k(\fam{M})\otimes_{k}\Lambda\xrightarrow{\id\otimes\alpha} \End_k(\fam{M})\otimes_k k^r=\bigoplus_{i=1}^r\End_k(M_i).$$
A \emph{deformation} of $\boldsymbol{\varrho}$ is then simply an equivalence class of lifts under equivalence of representations. 

From the viewpoint of structure morphisms, the versal family for the corresponding deformation functor $\Def_{\boldsymbol{\varrho}}$ is the morphism
$$\boldsymbol{\hat\varrho}_{\fam{M}}: A\to  \End_k(\fam{M})\otimes_k \hat H=\big(\Hom_k(M_i,M_j)\otimes_k\hat H_{ij}\big).$$This way of viewing deformation theory of modules (i.e., via structure morphisms) is clearly equivalent to the first (i.e., via the category $\Mod(A)$).  

Put 
$$\hat\OO_{\fam{M}}:=\End_k(\fam{M})\otimes_k \hat H=\big(\Hom_k(M_i,M_j)\otimes_k\hat H_{ij}\big).$$ It is natural to view this as the completed local ring at the family $\fam{M}$. Consequently, any algebraisation $H$ of $\hat H$ gives a ring morphism 
$$\boldsymbol{\varrho}_{\fam{M}}: A\longrightarrow  \End_k(\fam{M})\otimes_k H=\big(\Hom_k(M_i,M_j)\otimes_k H\big)$$ and it is natural to view
$$\OO_{\fam{M}}:=\End_k(\fam{M})\otimes_k H=\big(\Hom_k(M_i,M_j)\otimes_k H\big)$$ as the local ring at $\fam{M}$. Observe that we cannot claim that algebraisations are unique, so $\OO_{\fam{M}}$ is not necessarily uniquely determined by  $\hat\OO_{\fam{M}}$.

For a sub-family $\mathsf{M}_0\subseteq \mathsf{M}$, we get an, up to isomorphism, canonical restriction morphism
$$\OO_\mathsf{M}\xrightarrow{\mathrm{res}} \OO_{\mathsf{M}_0}.$$ This can be used to extend the above definition to infinite families of modules. 

Let $\fam{M}$ be an infinite family of $A$-modules and let $\mathsf{S}$ be the category of simplicial sets, i.e., the category of contravariant functors $F:\Delta\to\Set$, where $\Delta$ denotes the category of simplicies. Put 
$$\boldsymbol{\hat\OO}_{\fam{M}}:= \varprojlim_{\fam{M}_0\subseteq\fam{M}}\hat\OO_{\fam{M}_0}, \quad\text{with versal family}\quad \boldsymbol{\hat\varrho}_{\fam{M}}:=\varprojlim_{\fam{M}_0\subseteq\fam{M}}\boldsymbol{\varrho}_{\fam{M_0}}: A\to \boldsymbol{\hat\OO}_{\fam{M}}.$$ We now define the following formal ring object
$$\boldsymbol{\hat\OO}:=\boldsymbol{\hat\OO}_{\Mod(A)}:= \varprojlim_{\mathsf{S}}\hat\OO_{\mathsf{S}}\qquad\text{with}\qquad \boldsymbol{\hat\varrho}:=\varprojlim_{\mathsf{S}}\boldsymbol{\hat\varrho}_{\mathsf{S}}: A\to \boldsymbol{\hat\OO},$$ and its (possibly non-unique) algebraic ring object
$$\boldsymbol{\OO}:=\boldsymbol{\OO}_{\Mod(A)}:= \varprojlim_{\mathsf{S}}\OO_{\mathsf{S}}\qquad\text{with}\qquad \boldsymbol{\varrho}:=\varprojlim_{\mathsf{S}}\boldsymbol{\varrho}_{\mathsf{S}}: A\to \boldsymbol{\OO}.$$It should be clear what the notation means.

The exact same construction extends to sheaves over a scheme, with the exception that one needs to use a global version of Hochschild cohomology instead of the ordinary affine one (which gives the $\Ext$-groups). See \cite[Sec. 3.4]{EriksenLaudalSiqveland} for details on this. 

Observe that the construction of the pro-representing hull $\hat H$ involves the two first layers of the augmented tangent space $\TE^\bullet_\mathsf{M}$.

\subsection{Non-commutative algebraic spaces}\label{sec:nc_alg_space}
We now return to the global situation. 

\begin{dfn}Let $\AA$ be coherent $\OO_X$-algebras as above. We call $$\gMod(\AA):=(\Mod(\AA), T_\mathrm{ZJ}, \boldsymbol{\OO})$$ the \emph{non-commutative algebraic space} (or \emph{non-commutative scheme}) associated with $\AA$. We also put $$\gMod_{n}\!(\AA):=(\Mod_n\!(\AA), T_\mathrm{ZJ}, \boldsymbol{\OO}_n).$$ Here $\boldsymbol{\OO}_n$ is obviously the restriction of $\boldsymbol{\OO}$ to $\gMod_n(\AA)$. The algebra $\AA$ is to be viewed as the algebra of global sections of $\underline{\mathcal{O}}$, i.e., informally, as
$$\AA=\Gamma(\gMod(\AA), \boldsymbol{\OO})=H^0(\gMod(\AA), \boldsymbol{\OO}).$$The object $\boldsymbol{\OO}$ gives the \emph{local} information of $\gMod(A)$. We call $\boldsymbol{\OO}$ the \emph{structure object} of $\Xscr_\AA$.
\end{dfn}
The local nature of $\boldsymbol{\OO}$ is the reason that we don't need to deform the $\AA$-sheaves in $\fam{M}$ as \emph{sheaves}, but can do this over the affine patches on $X$, ignoring the glueing.  

\begin{dfn}	If $\AA$ is a finite-rank $\OO_X$-algebra, where $X$ is of finite type over an arithmetic ring (by which we mean an order, most often the maximal order, in a number field), $\gMod(\AA)$ is called an \emph{arithmetic space} (or \emph{arithmetic scheme}). However, the objects $\boldsymbol{\OO}$ and $\boldsymbol{\OO}_n$ can only defined fibre-wise since they are local, deformation-theoretic, objects. 
\end{dfn}

\begin{dfn}The space $\underline{\Mod}(\AA)$ is \emph{regular at $\mathsf{M}$} if $\Ext_\AA^2(\MM_i,\MM_j)=0$ for all $\MM_i, \MM_j\in\mathsf{M}$; $\underline{\Mod}(\AA)$ is \emph{regular} if it is regular at all families $\mathsf{M}$.
\end{dfn}
It is worth pointing out that the above is a deformation-theoretic definition of regularity.  Hence a point is regular if all deformations of that point are unobstructed. This is a strong condition. There are other, weaker, notions of non-commutative regularity (e.g., Auslander regularity) that we will encounter later (but won't define formally).
\begin{notation}\label{not:ModX} To simplify notation, we will often use the notation $$\Xscr:=\underline{\Mod}(\AA)$$ or $\Xscr_\AA$ if we need to be explicit concerning what algebra we are working with. This will most often be apparent from the context. Recall that $$\underline{\Mod}(\AA)=(\Mod(\AA), T_{\mathrm{ZJ}}, \boldsymbol{\OO}).$$ However, we will be a bit sloppy and make the identifications
$$\Xscr_\AA\quad\longleftrightarrow\quad\underline{\Mod}(\AA)\quad\longleftrightarrow\quad\Mod(\AA).$$ I don't think this will cause much confusion as we will be specific when we use the topology and $\underline{\OO}$. 
\end{notation}
%We define the dimension of $\OO_\fam{M}$ as 
%$$\dim\OO_\fam{M}:=\#\fam{M}\cdot\sum_{i=1}^n\mathrm{gl.dim}(\hat H_{ii})$$and also
%$$\dim\Xscr_\AA:=\sup\{\dim\OO_\fam{M}\}, $$the supremum taken over all \emph{finite} families $\fam{M}$ of $A$-modules. 

Finally, the following is important enough to warrant its own remark.
\begin{remark}\label{rem:projectivisation}
Let $\AA$ be a non-commutative algebra over an affine $S$-scheme $X$. Then, taking the projective closure $f: X\hookrightarrow \mathbb{P}_S^n$, we can push-forward $\AA$ to an algebra $f_\ast\AA$ on $\im f$. This can be useful when considering non-commutative algebras that are finite over their centre since we then can use projective techniques (properness in particular) in the study of $\AA$. 
\end{remark}
\subsection{Point modules}\label{sec:points}
It turns out that it is not easy to define closed points for non-commutative spaces over non-algebraically closed fields, in a way generalizing the commutative situation naturally. Since we view closed points as ``local objects'' we discuss the case of $\AA=A$ affine over a field $k$ and then explain how to globalise. 

Let $k$ be a field and $A$ a $k$-algebra. A \emph{point} on $\Xscr_A$ is a finite-dimensional representation $\rho: A\to \End_k(M)$. We will identify $\rho$, the kernel, $\ker\rho$, and the module $M$, referring to all these as the point $\rho$. 

Assume that $\ker\rho = \mm_1\mm_2\cdots\mm_s$, where the $\mm_i$ are, not necessarily distinct, maximal ideals. Then
$$A/\ker\rho=A/\mm_1\times A/\mm_2\times\cdots\times A/\mm_s, $$with each $A/\mm_i$ a simple $k$-algebra. Let $\rho_i$ be the induced morphism $\rho_i: A\to A/\mm_i\hookrightarrow \End_k(M)$.  Now we have
\begin{align*}
E&:=\cent(A/\ker\rho)=\cent(A/\mm_1)\times \cent(A/\mm_2)\times\cdots\times \cent(A/\mm_s)\\
&=k(\mm_1)\times k(\mm_2)\times\cdots\times k(\mm_s)\\
&=k(\rho_1)\times k(\rho_2)\times\cdots\times k(\rho_s)
\\
&=k_1\times k_2\times \cdots\times k_s,
\end{align*}where each factor $k_i=k(\mm_i)=k(\rho_i)$ is a finite field extension of $k$. Therefore, $E$ is an \'etale algebra over $k$.

We now define:
\begin{dfn}Let $\rhobf:A\to \End_k(M)$ be a point on $\Xscr_A$.  
\begin{itemize}
	\item[(a)] Then $\rhobf$ defines a \emph{closed \'etale point} if the $k$-algebras
$\bar\rho_i:=A/\mm_i$ are all simple. We denote the set of all closed \'etale points $\Mod_{\text{\'et}}^\bullet(A)$.
	\item[(b)] If $s=1$, $\rhobf$ is simply a \emph{closed point}, which write $\rho$ (in non-boldface). The set of all closed points is denoted $\Mod^\bullet(A)$.
	\item[(c)] The fields $k_i=k(\mm_i)=k(\rho_i)=\cent(\bar\rho_i)$ are the \emph{residue fields} of $\rhobf$. The $k$-algebra $E$ is the \emph{residue ring} of $\rhobf$. 
	\item[(d)] The algebras $\bar\rho_i$, or equivalently the $\mm_i$, are the \emph{underlying points} of $\rhobf$. 
\end{itemize}
\end{dfn}

\begin{dfn}Let $k$ be a field and let $A$ and $S$ be $k$-algebras.
\begin{itemize}
	\item[(a)] An \emph{\'etale $S$-rational point} on $\Xscr_A$ is an algebra morphism
$$\xibf: A\to S$$ such that 
$$A/\ker\xibf=A/\mm_1\times A/\mm_2\times\cdots\times A/\mm_s$$ is a direct product of prime rings. If the $A/\mm_i$ are artinian, being prime is equivalent to being simple so the $\mm_i$ are maximal ideals. This applies in particular to the case when $S$ is artinian. 
	\item[(b)] The \emph{underlying \'etale point} of $\xibf$ is the set of algebras $\bar\xi_i:=A/\mm_i$. 
	\item[(c)] If all $\bar\xi_i$ are simple, the point is \emph{closed}, otherwise it is \emph{non-closed}. The point is \emph{open} if all the $\bar\xi_i$ are non-simple. 
	\item[(d)] If $s=1$, the map $\xibf$ is an \emph{$S$-rational point}, which we write in non-boldface: $\xi$; the algebra $\bar\xi=A/\mm$ is then the (unique) underlying point of $\xi$.  
	\item[(e)] If $S=\End_L(M)$ for some field $L$ and $M$ finite-dimensional over $L$, we say that $\xi$ is an \emph{$L$-rational point}.
	\item[(f)] $\xibf$ is a \emph{geometric \'etale point} if it is closed and $\cent(\bar\xi_i)=k^\mathrm{al}$ for all $i$.  
\end{itemize}
As usual we denote the $S$-rational points on $\Xscr=\underline{\Mod}(A)$ as $$\Xscr(S)=\underline{\Mod}(A)(S).$$ 
\end{dfn}
\begin{example}Let $A$ and $S$ be $k$-algebras where $k$ is a field and let $\xi: A\to S$ be an $S$-point on $\Xscr_A$. For any field extension $k'$ of $k$ the base extension $\xi_{k'}:=\xi\otimes k'$ defines an $S\otimes_k k'$-rational point. 

In particular, if $S=\End_k(M)$ we have that
$$ \xi_{k'}=\xi\otimes k': A\to \End_k(M)\otimes_k k' =\End_{k'}(M\otimes_k k')$$ is a $k'$-rational point. The point is then geometric if $k'=k^\mathrm{al}$. 
\end{example}
\begin{example}
Let $S$ be a commutative ring and $\rho:\cent(A)\to S$ an $S$-point. Put $\mm_\cent:=\ker \rho$. The extension of $\mm_\cent$ to $A$ defines an ideal, $\mm$, not necessarily maximal. Then 
$$\rhobf: A\to A/\mm$$ defines an \'etale $A/\mm$-rational point.
\end{example}

There is a bijective correspondence
$$\Big\{\text{$S$-rational points, $\xi:A\to S$}\Big\}\quad\longleftrightarrow\quad\Big\{(A/\mm, j)\,\,\big\vert\,\, j:A/\mm\hookrightarrow S\Big\}.$$ Notice that $S$ becomes a $\cent(A/\mm)$-algebra via $j$.

For any extension $S\subset T$ the group $\Aut_S(T)$ acts on $\Xscr_A(S)$ as
$$(A/\mm, j)^\sigma=(A/\mm, \sigma\circ j)$$for all $\sigma\in\Aut_S(T)$.

\subsection{Polynomial identity algebras}\label{sec:PI}
We will be particularly interested in the case where $\AA$ is finite as a module over $\OO_X$. In this case $\AA$ is locally a polynomial identity (PI-) algebra with $\OO_X\subseteq \cent(\AA)$, where $\cent(\AA)$ denotes the centre of $\AA$. 

Let $U\subseteq X$ be an affine open set and put $A:=\AA(U)$. In addition, let $\rho:A \to\End_k(M)$ be a point on $\Xscr_\AA$. The kernel $\Mm:=\ker\rho$ restricts to an ideal $\mm$ in $\cent(A)$. If $\Mm$ is prime (or maximal) then so is $\mm$ (see \cite[III.1.1]{BrownGoodearl}, for instance). Put $Y:=\Spec(\cent(\AA))$. The intersection of ideals in $A$ with the centre defines a finite, surjective, morphism $\Psi: \Xscr_A\to Y$.

Conversely, if $\pp\in Y$ there is a prime $\mathfrak{P}\in\Spec(A)$ such that $\pp=\mathfrak{P}\cap\cent(A)$. The locus in $Y$ where the extension $\mathfrak{P}$ is unique is called the \emph{Azumaya locus}, $\azu(A)$,  of $A$. This is a Zariski open subset and the complement is the support of a Cartier divisor (e.g., \cite[III.2.5]{Jahnel}) called the \emph{ramificication locus}, $\ram(A)$. 

Hence, if $\rho$ is an \'etale point, the intersection $(\ker\rho)\cap\cent(A)$ is a finite collection of closed points in $Y$. 

The above indicates that there is a close relationship between the geometry of $\Xscr_\AA$ and the geometry of $\gSpec(\cent(\AA))$. In the proposition below we will freely use the identification in (\ref{eq:module_vs_anni}) to identify modules with their corresponding annihilator ideals. We don't need to assume that the annihilator ideals are prime.  
\begin{prop}\label{prop:PItopology}Let $\AA$ be a PI-algebra over $\OO_X$, with centre $\cent(\AA)$.  
\begin{itemize}
	\item[(i)] Suppose $D_{f'}$ is a distinguished open set on $\Xscr_\AA$. Then $D_{f'}\cap\gSpec(\cent(\AA))$ is a distinguished open set on $\gSpec(\cent(\AA))$.
	\item[(ii)] Conversely, suppose $D_{f}$ is a distinguished open on $\gSpec(\cent(\AA))$. Then $D_{f'}$, where $f=f'\cap \cent(\AA)$, is a distinguished open in $\Xscr_\AA$.
\end{itemize} 
As a consequence, since the distinguished open sets are basis sets for the Zariski and Zariski--Jacobson topologies on $\gSpec(\cent(\AA))$ and $\Xscr_\AA$, respectively, the Zariski--Jacobson topology on $\Xscr_\AA$ is compatible with the Zariski topology on $\gSpec(\cent(\AA))$. 
\end{prop}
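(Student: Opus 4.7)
The plan is to use the finite contraction map $\Psi\colon\Xscr_\AA\to\gSpec(\cent(\AA))$, $M\mapsto \Ann_\AA(M)\cap\cent(\AA)$, which is well defined because two-sided ideals contract to ideals of the centre (and maximal to maximal, as recalled just above the proposition). Using the identification in (\ref{eq:module_vs_anni}), I read the intersection $D_{f'}\cap\gSpec(\cent(\AA))$ as the trace of $D_{f'}$ on the centre via $\Psi$. The overall target is to show that $\Psi$ is both continuous (direction (ii)) and open on distinguished opens (direction (i)) with respect to the respective distinguished-open bases.

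For part (ii), which is essentially tautological, I will view the given $f\in\cent(\AA)$ as an element $f'\in\AA$ via the inclusion $\cent(\AA)\hookrightarrow\AA$. Since $f$ is central, $f\notin\Ann_\AA(M)$ is equivalent to $f\notin\Ann_\AA(M)\cap\cent(\AA)$ for every $M$, so $D_{f'}=\Psi^{-1}(D_f)$ and in particular $D_{f'}\cap\gSpec(\cent(\AA))=D_f$. No deeper input is required.

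For part (i), the substantive direction, I will exploit finiteness of $\AA$ as a $\cent(\AA)$-module to extract from $f'\in\AA$ a single central element that detects its vanishing. The candidate is $f:=\det(L_{f'})\in\cent(\AA)$, where $L_{f'}\colon\AA\to\AA$ is the $\cent(\AA)$-linear endomorphism given by left multiplication; on $\azu(\AA)$ this is, up to a power, the reduced norm of $f'$. Cayley--Hamilton yields a monic relation of the form $f'^n+c_{n-1}f'^{n-1}+\cdots+c_1f'\pm f=0$ in $\AA$, so any $M$ on which $f'$ acts as zero also has $f$ in its central annihilator, giving $D_{f'}\cap\gSpec(\cent(\AA))\subseteq D_f$. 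For the reverse inclusion, if $f\in\mm$ then the reduction $\bar L_{f'}$ is non-invertible on the fibre $\AA/\mm\AA$, producing a nonzero quotient on which $f'$ acts trivially and whose central support is exactly $\mm$, which places $\mm$ in the trace of $D_{f'}$ after passing to a suitable simple sub/quotient.

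The hard part will be making this second inclusion sharp across the ramification locus. On $\azu(\AA)$ the fibre $\AA\otimes_{\cent(\AA)}k(\mm)$ is a central simple algebra and the reduced norm cleanly matches the two conditions, so $D_{f'}\cap\azu(\AA)=D_f\cap\azu(\AA)$. Over $\ram(\AA)$ the decomposition $\mm\AA=\Mm_1\cdots\Mm_s$ from the paragraph preceding the proposition has to be threaded through, matching modules with their central annihilators and checking that the single element $f$ still cuts out the correct locus; since $\ram(\AA)$ is a closed subscheme and $f$ is a global section of $\cent(\AA)$, continuity from $\azu(\AA)$ should force the identity. Once (i) and (ii) are in hand, the final compatibility of the Zariski and Zariski--Jacobson topologies is a formal consequence of the distinguished opens forming bases of the two topologies.
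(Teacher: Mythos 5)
Your treatment of part (ii) is clean and matches the paper's: since any $f\in\cent(\AA)$ is already an element of $\AA$, centrality gives $f\in\Ann_\AA(M)\iff f\in\Ann_\AA(M)\cap\cent(\AA)$, so $D_{f'}=\Psi^{-1}(D_f)$.

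Part (i), however, takes a genuinely different route from the paper and the route has a fatal flaw. You propose $f:=\det(L_{f'})$ and claim $D_{f'}\cap\gSpec(\cent(\AA))=D_f$. Test this on the simplest non-trivial PI-algebra: $\AA=\mathrm{Mat}_2(k)$, $\cent(\AA)=k$, and $f'=E_{11}$ the matrix unit. Here $\AA$ has a unique (two-sided) maximal ideal, namely $(0)$, and $E_{11}\notin(0)$, so $D_{f'}$ is the whole space and its trace on $\gSpec(k)$ is the single point of $\Specm(k)$. But $L_{E_{11}}$ on the basis $\{E_{ij}\}$ is $\mathrm{diag}(1,1,0,0)$, so $\det(L_{f'})=0$, which gives $D_f=\emptyset$. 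Thus the proposed equality fails already on the Azumaya locus, not just the ramification locus you flagged as the ``hard part.'' The deeper problem is the direction of your Cayley--Hamilton implication: from $f'^n+c_{n-1}f'^{n-1}+\cdots+c_1f'\pm f=0$ you get $f\in\AA f'\AA$, i.e. $f$ is a multiple of $f'$; so $f'M=0$ forces $fM=0$, but not conversely. That gives a containment of \emph{vanishing} loci in the wrong direction for what you want to prove, and no choice of central ``norm-like'' scalar (determinant, reduced norm, lowest nonzero characteristic coefficient, etc.) will fix this when $f'$ is a non-unit that is nevertheless outside every maximal two-sided ideal --- exactly the case of a non-central idempotent.

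The paper's own argument does not invoke any characteristic polynomial at all. It works directly with the correspondence $M\leftrightarrow\Ann_\AA(M)$ and the contraction $\af'\mapsto\af'\cap\cent(\AA)$: for (i) it observes that $f'\notin\af'$ gives a central element outside $\af'\cap\cent(\AA)$ (writing this, admittedly loosely, as ``$f'\cap\cent(\AA)\notin\af'\cap\cent(\AA)$''); for (ii) it uses that the extension of $\af\in\Spec(\cent(\AA))$ to $\AA$ splits into finitely many two-sided ideals $\af_i'$, and $f\in\af_i'\cap\cent(\AA)$ would force $f\in\af$, a contradiction, so $f'\notin\af_i'$. In short, the paper's proof is a bare bookkeeping argument with contractions and extensions; your determinant device is a more explicit but, in this generality, incorrect replacement for the substantive direction.
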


\begin{proof}
	Suppose $\af'\in D_{f'}$. Then $f'\not\in\af'$ and so $f'\cap\cent(\AA)\not\in \af'\cap\cent(\AA)$, in other words, $\af'\cap\cent(\AA)\in D_{f'\cap\cent(\AA)}$. Conversely, suppose that $\af\in D_{f}\subset\gSpec(\cent(\AA))$ and take some $f'\in \AA$ such that $f'\cap \cent(\AA)=f$. The extension of $\af$ to $\AA$ can be split into a number of ideals $\{\af_i'\subset \AA\}$. Suppose $f\in \af_i'$ for some $i$. Then $f'\cap\cent(\AA)\in \af_i'\cap\cent(\AA)\iff f\in \af$, a contradiction. Hence $f'\notin\af_i'$ and so the extension of $D_{f}$ to $\Xscr_\AA$ is a distinguished open set. 
\end{proof}
If we view $\AA$ as a sheaf of algebras over its centre $\gSpec(\cent(\AA))$, we can use central localization, i.e., localization of $\AA$ at multiplicatively closed sets in its centre (such a localization is always well-defined), and find
$$\AA(D_f)=\AA_f\quad\text{and}\quad \Xscr_\AA(D_f)=\Xscr_{\AA_f}.$$Observe that these two statements are different. The first one says that the sheaf $\AA$ is equal to $\AA_f$ over $D_f\subseteq\gSpec(\cent(\AA))$ on the centre, while the other says that the open set $D_f$, viewed as a distinguished open on $\Xscr_\AA$, is equal to $\Xscr_{\AA_f}$. 

However, the proposition says more: we can localize on $\AA$ directly by simply restricting the distinguished open sets $D_{f'}$ of $\Xscr_\AA$ to distinguished opens $D_f$ on $\gSpec(\cent(\AA))$.

There are a number of constructions that follow from proposition \ref{prop:PItopology}. We list the most obvious and important ones below.

\subsubsection{Sheaves}\label{sec:sheaves}Indeed, proposition \ref{prop:PItopology} allows us to define sheaves on $\Xscr_\AA$ with the Zariski--Jacobson topology, as sheaves on  $\gSpec(\cent(\AA))$, defined by the restrictions $D_f\mapsto D_{f\cap\cent(\AA)}$.
	
\begin{dfn}\label{def:A-mod}
An \emph{$\AA$-module} on $\Xscr_\AA$ is a sheaf $\mathcal{F}$ on $\gSpec(\cent(\AA))$ such that each $\mathcal{F}(D_f)=\mathcal{F}(D_{f\cap\cent(\AA)})$ is an $\AA_{f\cap\cent(\AA)}$-module. Notice that this implies that $\mathcal{F}$ is automatically an $\OO_{\cent(\AA)}$-module (we write $\OO_{\cent(\AA)}$ instead of $\OO_{\gSpec(\cent(\AA))}$ to simplify notation).
\end{dfn}

Recall that a \emph{Jacobson ring} is a ring in which every prime ideal is the intersection of primitive ideals. The most important examples are, fields, Dedekind domains with infinitely many prime ideals and affine algebras over Jacobson rings. 

An algebra $A$ is \emph{generically free} if every finitely generated $A$-module $M$ is \emph{centrally} locally free, in the sense that there is an $f\in\cent(A)$ such that $M_f:=M\otimes_A  A_f$ is free as an $A_f:=A\otimes_{\cent(A)}\cent(A)_f$-module. 

It is a fact that every affine PI-algebra over a Jacobson ring is generically free (see \cite[Theorem 6.3.3]{RowenII}). Therefore, for affine PI-algebras $\AA$ over Jacobson schemes, finitely generated $\AA$-modules are locally free. Hence,
\begin{prop}Every finitely generated module over $\Xscr_\AA$, where $\cent(\AA)$ is a Jacobson ring, is locally free as an $\AA$-module.
\end{prop}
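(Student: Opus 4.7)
The plan is to apply Rowen's theorem on generic freeness (Theorem 6.3.3 of \cite{RowenII}, recalled just above) and transport its conclusion to $\Xscr_\AA$ via Proposition \ref{prop:PItopology}. First I would reduce to an affine patch: take $U = \Spec(C_0) \subseteq X$ and set $A := \AA(U)$, $C := \cent(A)$. Because $\AA$ is coherent over $\OO_X$ with $\OO_X \subseteq \cent(\AA)$, the algebra $A$ is finite over $C$, hence an affine PI-algebra; by hypothesis $C$ is Jacobson. Let $\mathcal{F}$ be a finitely generated $\AA$-module, and put $M := \mathcal{F}(U)$, a finitely generated $A$-module.

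Next I would apply the recalled theorem of Rowen: there exists $f \in C$ such that $M_f$ is free as an $A_f$-module. By Proposition \ref{prop:PItopology}(ii), the distinguished open $D_f$ in $\gSpec(C)$ corresponds to the distinguished open $D_f$ in $\Xscr_A$ (taking $f \in C \subseteq A$ as itself in $A$), and by Definition \ref{def:A-mod} the restriction $\mathcal{F}|_{D_f}$ is then free as an $\AA_f$-module. To upgrade this generic statement into a local-freeness statement at every point, I would run a noetherian induction on the closed complement: restrict $\mathcal{F}$ to $V(f) = \gSpec(C/(f))$ and apply the same theorem over the Jacobson quotient $C/(f)$ (Jacobson is preserved under quotients) to $A/fA$, which is still finite over $C/(f)$ and an affine PI-algebra. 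This produces a strictly descending chain of closed subsets which terminates by noetherianness of $C$, yielding a finite cover of $\gSpec(\cent(\AA))$ by distinguished opens on each of which the restricted sheaf is free over the corresponding quotient algebra. Pulling this back through Proposition \ref{prop:PItopology} gives a cover of $\Xscr_\AA$ witnessing local freeness.

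The delicate point is the induction step. Strictly, Rowen's theorem provides only dense-open freeness, and to cover points of the closed stratum $V(f)$ one must argue that the stratified freeness assembled over $A/fA$, $A/(f,g)A$, $\ldots$ actually furnishes a genuine Zariski--Jacobson open cover of $\Xscr_\AA$ rather than merely a stratification of the centre. The key to making this rigorous is the compatibility of topologies established in Proposition \ref{prop:PItopology} together with the construction of $\AA$-modules in Definition \ref{def:A-mod}, which ensure that ``locally free'' as $\AA_{f\cap\cent(\AA)}$-module and ``locally free'' as sheaf on the Jacobson base agree under the correspondence $D_f \mapsto D_{f\cap \cent(\AA)}$. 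Once this identification is unwound the proof is essentially formal, the entire substantive content being absorbed into the cited generic-freeness theorem.
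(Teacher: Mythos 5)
Your proof starts from the right place — Rowen's generic freeness theorem — and this is in fact the entirety of the paper's own argument: the proposition carries no separate proof in the paper and is stated as an immediate ``Hence'' after the recollection of Theorem 6.3.3 of \cite{RowenII}. The divergence is that you then append a noetherian induction intended to upgrade generic freeness (free after inverting a single central $f$) into local freeness at every point, and this is where the proof breaks.

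The induction step does not do what you need it to do. Applying generic freeness to $A/fA$ and $M/fM$ tells you that the \emph{reduction} $M/fM$ becomes free over $A/fA$ after a further central localisation; it says nothing about $M$ itself near points of $V(f)$. What the stratification produces is a finite decomposition of $\gSpec(\cent(\AA))$ into locally closed pieces, on each of which a \emph{different} module (successive reductions of $M$) is free over the corresponding quotient algebra. That is not an open cover of $\Xscr_\AA$ on which $\mathcal{F}$ is free, and Proposition \ref{prop:PItopology} (which only translates distinguished opens between $\gSpec(\cent(\AA))$ and $\Xscr_\AA$) cannot convert a stratification into an open cover. You flag this yourself as the ``delicate point'' and then assert the topological compatibility absorbs it; it does not. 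The obstacle is already visible in the simplest commutative case: take $A = \cent(A) = k[x]$ (affine PI over a Jacobson ring) and $M = k[x]/(x)$. Then $M_x = 0$ is free over $A_x$, so Rowen's theorem is satisfied with $f = x$, and $M/xM$ is even free over $A/xA$ — yet $M$ is not locally free at the origin, and no refinement of the induction recovers that. So under the standard reading of ``locally free'' the statement is simply false, and no argument can close the gap.

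What the paper actually intends — and what your citation of the surrounding text should have signalled — is that ``locally free'' here is the non-standard usage already embedded in the recalled definition of ``generically free'', where the paper itself writes that $A$ is generically free if every finitely generated module is ``centrally \emph{locally} free, in the sense that there is an $f \in \cent(A)$ such that $M_f$ is free''. The proposition is just a restatement of that, globalised over affine patches via Proposition \ref{prop:PItopology} and Definition \ref{def:A-mod}, and no noetherian induction is wanted or possible. If you strip your argument back to its first two paragraphs (affine reduction, apply Rowen, translate $D_f$ via Proposition \ref{prop:PItopology}) and stop there, you land exactly on the paper's intended proof.
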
  

For simplicity of notation, we will temporarily work with affine algebras. An \emph{invertible} $A$-module is a finitely generated, projective, $A$-bimodule such that $$A\simeq \End_A({}_AM)\quad\text{and}\quad A\simeq \End_A(M_A),$$ where we denote the left action of $A$ on $M$ by ${}_AM$ and similarly the right action. 

The set of isomorphism classes of all such form a group, denoted $\Pic(A)$, under tensor products (see \cite{FrohlichPicard}):
$$[M]\cdot [N]:=[M\otimes_A N].$$This is well-defined since $M$ and $N$ are $A$-bimodules. 

Let $R$ be a commutative subring of $A$ and let $M$ be an $A$-bimodule. If $Mr=rM$ for all $r\in R$, we say that $M$ is defined \emph{over} $R$. Notice that this need not be the case in general since $R$ can act differently from the left and from the right. 

We denote the set of invertible $A$-modules over $R$ by $\Pic_R(A)$ (or $\Pic_R(\Xscr_A)$), and by $\Pic^{\text{lf}}_R(A)$ (or $\Pic^{\text{lf}}_R(\Xscr_A)$) the set of invertible $A$-modules that are locally free over $R$.  If $R=\cent(A)$ we put $\Pic_\cent(A):=\Pic_{\cent(A)}(A)$. 

Suppose $\Lscr\in\Pic(R)$. Hence, $\Lscr$ is a locally free $R$-module of rank one. The following is almost certainly well-known.
\begin{prop}The map $T: \Pic(R)\to \Pic_R(A)$ defined by
$$T(\Lscr):=A\otimes_R\Lscr\otimes_R A$$ is a group homomorphism.
\end{prop}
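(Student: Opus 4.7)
My plan is to verify three items: that $T(\Lscr)$ is a well-defined element of $\Pic_R(A)$; that $T$ is multiplicative with respect to the tensor-product group laws on both sides; and that $T$ preserves the identity.

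For well-definedness, I first note that $A\otimes_R\Lscr\otimes_R A$ inherits an evident $A$-bimodule structure, with $A$ acting on the left-most and right-most tensor factors respectively, and the two actions commute. The bimodule is defined over $R$ tautologically, since the tensor products are formed over $R$: for any $r\in R$ both $r\cdot(a\otimes\ell\otimes a')$ and $(a\otimes\ell\otimes a')\cdot r$ can be rewritten as $a\otimes(r\ell)\otimes a'$ by shuffling $r$ across an $R$-balanced tensor. Because $\Lscr$ is locally free of rank one, I would choose an open cover $\{D(f_i)\}$ of $\Spec R$ on which $\Lscr$ trivialises, observe that $T(\Lscr)|_{D(f_i)}\simeq A_{f_i}\otimes_{R_{f_i}} A_{f_i}$ is finitely generated projective, and patch using the compatibility of Zariski--Jacobson and Zariski topologies from Proposition~\ref{prop:PItopology}. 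The endomorphism identities $\End_A({}_AT(\Lscr))\simeq A\simeq\End_A(T(\Lscr)_A)$ are established in the same local manner by reducing to the trivial case $\Lscr\simeq R$.

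For multiplicativity, the core construction is a canonical $A$-bimodule isomorphism
$$
T(\Lscr)\otimes_A T(\Lscr')\;\xrightarrow{\,\sim\,}\; T(\Lscr\otimes_R\Lscr'),
$$
induced by multiplying the adjacent middle $A$-factors and invoking the universal property of $\otimes_R$ to fuse the $\Lscr$-factors. Well-definedness is routine from the balancing conditions of $\otimes_A$ and $\otimes_R$; bijectivity is then reduced, as above, to the trivial case $\Lscr,\Lscr'\simeq R$, where both sides collapse to the same expression by associativity and unitality of tensor products. The identity axiom is immediate: $T(R)=A\otimes_R R\otimes_R A$ is canonically isomorphic to $A$, which represents the unit of $\Pic_R(A)$.

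The main obstacle I anticipate is the multiplicativity step: pinning down the explicit isomorphism above and verifying it is simultaneously well-defined, bijective, and natural in $\Lscr$ and $\Lscr'$. Since these sheaves are only locally free, I would work on a common refinement of the two trivialising covers and glue the local isomorphisms after checking cocycle compatibility. Functoriality of $T$ on morphisms in $\Pic(R)$ is comparatively trivial, since any isomorphism $\Lscr\simeq\Lscr'$ of invertible $R$-modules induces the obvious isomorphism $T(\Lscr)\simeq T(\Lscr')$ by functoriality of tensor products.
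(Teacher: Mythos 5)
Your proof follows essentially the same route as the paper's (a formal tensor-product manipulation that collapses a middle $A\otimes_A A$), and it inherits — and in fact exposes — a gap that the paper's ``simple and obvious'' computation glosses over. The clearest symptom is your identity axiom: you claim $T(R)=A\otimes_R R\otimes_R A$ is canonically isomorphic to $A$, but it is $A\otimes_R A$, which is not isomorphic to $A$ as an $A$-bimodule unless the multiplication $A\otimes_R A\to A$ happens to be an isomorphism (e.g., $A=R$ or $A$ a localization or quotient of $R$). Concretely, if $A=\mathrm{Mat}_n(R)$ with $n\ge 2$ then $A\otimes_R A\simeq\mathrm{Mat}_{n^2}(R)$, and $\End_A\big({}_A(A\otimes_R A)\big)$ is a matrix ring over $A$ of size $>1$; so $T(R)$ is not even invertible and $T$ does not land in $\Pic_R(A)$.

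The same extra middle $A$ sinks your multiplicativity step. After multiplying the adjacent $A\otimes_A A$ in $T(\Lscr)\otimes_A T(\Lscr')$ you are left with $A\otimes_R\Lscr\otimes_R A\otimes_R\Lscr'\otimes_R A$, and there is no canonical $A$-bimodule isomorphism from this to $T(\Lscr\otimes_R\Lscr')=A\otimes_R\Lscr\otimes_R\Lscr'\otimes_R A$: eliminating the interior $A$ would require an $R$-bimodule splitting $A\to R$, which is not available. Your reduction to the trivial case $\Lscr=\Lscr'=R$ should have flagged this — the two sides become $A\otimes_R A\otimes_R A$ and $A\otimes_R A$, which do \emph{not} ``collapse to the same expression.'' For what it is worth, the paper's proof has precisely the same flaw: the second line $A\otimes_R\Lscr_1\otimes_R\Lscr_2\otimes_R A = A\otimes_R\Lscr_1\otimes_R A\otimes_A A\otimes_R\Lscr_2\otimes_R A$ silently replaces $\Lscr_1\otimes_R\Lscr_2$ by $\Lscr_1\otimes_R A\otimes_R\Lscr_2$. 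A clean repair, under the standing assumption $R\subseteq\cent(A)$, is to take $T(\Lscr):=A\otimes_R\Lscr$ with both $A$-actions on the first factor (the right action is well-defined precisely because $R$ is central); then $T(\Lscr_1)\otimes_A T(\Lscr_2)\simeq A\otimes_R\Lscr_1\otimes_R\Lscr_2=T(\Lscr_1\otimes_R\Lscr_2)$ and $T(R)=A$, as required.
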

\begin{proof}
	The proposition follows from the following simple (and obvious) computation:
	\begin{align*}
		T(\Lscr_1\otimes_R\Lscr_2)&=A\otimes_R\Lscr_1\otimes_R\Lscr_2\otimes_R A\\
		&=A\otimes_R\Lscr_1 \otimes_R A\otimes_A A \otimes_R\Lscr_2\otimes_R A\\
		&=(A\otimes_R\Lscr_1\otimes_R A)\otimes_A (A\otimes_R\Lscr_2\otimes_R A)\\
		&=T(\Lscr_1)\otimes_A T(\Lscr_2).\qedhere
	\end{align*}
\end{proof}
The above globalises immediately. The following proposition is corollary 4 in \cite{FrohlichPicard}.
\begin{prop}Let $f: R\to S$ be a surjective ring morphism. Then
$$\Pic_R(A)=\Pic_S(A)\quad\text{and}\quad \Aut_R(A)=\Aut_S(A).$$In particular, if $R$ is local and $f$ is the reduction morphism $R\to k(\mm)$, then 
$$\Pic_R(A)=\Pic_{k(\mm)}(A)\quad\text{and}\quad \Aut_R(A)=\Aut_{k(\mm)}(A).$$
This proposition does not globalise easily since the proof uses Morita equivalences and I don't know if Morita theory can be globalised. 
\end{prop}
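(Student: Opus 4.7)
The plan is to produce mutually inverse maps between $\Pic_R(A)$ and $\Pic_S(A)$, and likewise between the two automorphism groups. First I would unpack the implicit hypothesis: for the statement $\Pic_R(A)=\Pic_S(A)$ to make sense, the structure morphism turning $R$ into a commutative subring of $A$ must factor through $f$, equivalently $\ker f$ must map to $0$ in $A$, so that $S$ is genuinely a commutative subring of $A$. Under this compatibility, restriction of scalars along $f$ sends every invertible $A$-bimodule $M$ with coinciding left and right $S$-actions to the same bimodule regarded over $R$: the $R$-actions factor through $S$ and still coincide. This yields a canonical map $\Pic_S(A)\to \Pic_R(A)$.

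For the reverse direction, I would take $M\in\Pic_R(A)$ and observe that any $r\in\ker f$ acts as zero on $M$ from both sides, because its image in $A$ is zero and $M$ is an $A$-bimodule. Hence the common $R$-action on $M$ descends through $S=R/\ker f$, recovering an invertible $A$-bimodule in $\Pic_S(A)$. The invertibility conditions---faithful projectivity together with $A\simeq\End_A({}_AM)$ and $A\simeq\End_A(M_A)$---depend only on the underlying $A$-bimodule structure and so are preserved throughout, making the two constructions mutually inverse. The argument for $\Aut$ is parallel and slightly lighter: a ring automorphism $\phi$ of $A$ is $R$-linear exactly when it fixes the image of $R$ in $A$, which by surjectivity of $f$ equals the image of $S$, so $R$-linearity and $S$-linearity are literally the same condition. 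The ``in particular'' assertion then follows by specialising to $S=k(\mm)$, provided the compatibility $\mm\cdot A=0$ is in force so that the reduction map really does land in $\cent(A)$.

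The main obstacle lies not in the bookkeeping of module structures but in the invertibility conditions themselves. As the author's remark makes explicit, the proof in \cite{FrohlichPicard} genuinely uses Morita-theoretic input: one must verify that the Morita-equivalence class of $A$ as an $R$-algebra coincides with its Morita-equivalence class as an $S$-algebra, which is where the substantive work sits. This is also why the statement resists globalisation, since Morita theory for sheaves of non-commutative algebras over an arbitrary base scheme is not as clean as in the affine case, and the tensor-product manipulations that underpin the argument do not patch together without extra care.
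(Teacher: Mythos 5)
The paper supplies no argument here---it simply quotes Fr\"ohlich's Corollary~4---so there is nothing to compare your proof against line by line, only the author's own warning, built into the statement, that the proof ``uses Morita equivalences.'' Your argument uses none, which should already raise suspicion, and the source of the trouble is the compatibility hypothesis you supply. The paper defines $\Pic_R(A)$ only when $R$ is a commutative \emph{subring} of $A$, so $R\hookrightarrow A$ is injective. If, as you propose, this inclusion must factor through the surjection $f\colon R\twoheadrightarrow S$, then $\ker f$ is a subset of $R$ that maps to $0$ in $A$, whence $\ker f=0$ and $f$ is an isomorphism. Under that reading the proposition collapses to the tautology $\Pic_R(A)=\Pic_R(A)$, and the ``in particular'' clause, where $f$ is reduction modulo $\mm$, becomes vacuous unless $R$ was already a field. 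So your proof only establishes an empty special case of the statement the paper intends, not the statement itself.

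Your closing paragraph also contradicts your own middle one. There you correctly note that invertibility of $M$---finite generation, projectivity, and the two identifications $A\simeq\End_A({}_AM)$ and $A\simeq\End_A(M_A)$---is a condition purely on the $A$-bimodule structure, with no reference to $R$, and is therefore ``preserved throughout.'' At the end you reverse course and claim that ``the main obstacle lies \dots in the invertibility conditions themselves,'' invoking a comparison of ``the Morita-equivalence class of $A$ as an $R$-algebra'' with that ``as an $S$-algebra.'' Under the paper's definitions there is no such dependence on $R$ in the invertibility conditions, so these two assessments cannot both be right, and the second does not recover the Morita-theoretic content Fr\"ohlich's proof is said to rest on. Locating that content would require going back to Fr\"ohlich's actual definitions, which are not the same as the ones laid down in this paper, and the proposal does not do this.
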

Recall that $\Psi$ is the morphism defined by contracting ideals from $\AA$ to $\cent(\AA)$. 
\begin{dfn}\label{def:very_ample_sheaf} 
	 An invertible sheaf $\mathcal{L}\in \Pic^\text{lf}_\cent(\AA)$ is \emph{very ample} if 
	  $$\det(\Psi_\ast\mathcal{L})\simeq \mathcal{N}\vert_{\gSpec(\cent(\AA))},$$
	  for $\mathcal{N}$ a very ample invertible sheaf on $\P(\cent(\AA))$. In other words, there is an embedding $\alpha: \P(\cent(\AA))\hookrightarrow \P^m$, for some $m\geq 1$, such that $\mathcal{N}\simeq \alpha^\ast\OO(1)$.
	  
	  The set of all very ample sheaves on $\Xscr_\AA$ is denoted $\Pic^\text{va}_\cent(\Xscr_\AA)$ or simply $\Pic^\text{va}_\cent(\AA)$. 
	\end{dfn}
We also make the following definition:
	\begin{dfn}\label{def:canonical_sheaf} 
	 The sheaf $\mathcal{C}\in \Pic^\text{lf}_\cent(\AA)$ is a \emph{canonical sheaf} if 
	  $$\det(\Psi_\ast\mathcal{C})\simeq \mathcal{C}_\cent,$$
	  for $\mathcal{C}_\cent$ is a canonical sheaf on $\P(\cent(\AA))$. 
	\end{dfn}
There is probably a more general and sophisticated approach to canonical sheaves by using so called ``phase functors'' (see \cite{Larsson_phase} for a discussion).
%\subsubsection{Cohomological properties of PI-algebras}
%Auslander--Gorenstein, Auslander regular, Cohen--Macaulay, Homological homogenous
%
%Rational singularities. 

\subsubsection{``Morphisms'' between non-commutative spaces}
Let $\AA$ and $\mathcal{B}$ be two algebras over the same $k$-scheme $X$ (where $k$ is a field), and let $$\psi:\,\,\ \BB\to \AA$$be an algebra morphism. This defines a (set-theoretic) morphism $$\Mod(\psi):\,\,\,\Mod(\AA)\to \Mod(\BB).$$A ``morphism'' (which we will call a ``non-commutative morphism'') between $\Xscr_{\AA}$ and $\Xscr_\BB$ should respect both the Zariski--Jacobson topology and map $\OO_{\Xscr(\BB)}$ to $\OO_{\Xscr(\AA)}$. We now show that this can be done in a formal sense. 
\begin{thm}\label{thm:nc_morphism}Let $\psi$ be as above and let $$\fam{M}:=\big\{\MM_1, \MM_2, \dots, \MM_n\big\}$$be a finite family of $\AA$-modules (over $X$) such that $\dim_k(\MM_i)<\infty$ (cf. remark \ref{rem:rank_module_X}). Then $\psi$ induces a \emph{non-commutative morphism} 
$$\Xscr(\psi):\,\,\, \Xscr_\AA\to \Xscr_\BB$$defined by the composition 
\begin{equation}\label{eq:CompXBA}
\BB\xrightarrow{\psi}\AA\xrightarrow{\varrho}\CEnd_{\OO_X}(\fam{M}).
\end{equation}
\end{thm}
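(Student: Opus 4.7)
The plan is to verify in turn each of the three data that comprise a non-commutative morphism in the sense of the paragraph preceding the theorem: a set-theoretic map on modules, continuity for the Zariski--Jacobson topology, and compatibility of the structure objects $\boldsymbol{\OO}$.

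First I would make the set-theoretic map explicit. Any $\AA$-module $\MM$ with structure morphism $\varrho:\AA\to\CEnd_{\OO_X}(\MM)$ acquires a $\BB$-module structure via the composition $\varrho\circ\psi$ of (\ref{eq:CompXBA}); this gives a well-defined map $\Mod(\psi):\Mod(\AA)\to\Mod(\BB)$, compatible with isomorphism classes, since the construction is functorial in $\MM$. In particular, any finite family $\fam{M}$ of $\AA$-modules is transported to a finite family of $\BB$-modules with the same underlying $\OO_X$-sheaf structure.

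Next I would check continuity. Let $D_g\subseteq\Mod(\BB)$ be a basic open as in (\ref{eq:module_vs_anni}), for $g\in\BB$. A module $\MM\in\Mod(\AA)$ lands in $D_g$, after being pulled back along $\psi$, iff $g\cdot\MM\neq 0$ iff $\psi(g)\cdot\MM\neq 0$, that is, iff $\MM\in D_{\psi(g)}$. Hence $\Mod(\psi)^{-1}(D_g)=D_{\psi(g)}$ is a basic open in $\Mod(\AA)$, so $\Mod(\psi)$ is continuous for $T_{\mathrm{ZJ}}$. The same argument applied to intersections $D_{g_1}\cap\cdots\cap D_{g_s}$ shows that the preimage of any open is open, and that the induced map on distinguished opens is compatible with the compositions $D_{fg}=D_f\cap D_g$.

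Now for the structure object, which I expect to be the main obstacle, I would argue deformation-theoretically. Fix a finite family $\fam{M}$ of $\AA$-modules with $\dim_k(\MM_i)<\infty$ so that the hypotheses of the hull theorem hold for both $\AA$ and $\BB$ (the dimensions of $\Ext^i_\BB(\MM_a,\MM_b)$ for $i=1,2$ are bounded by those of $\Ext^i_\AA(\MM_a,\MM_b)$ since any $\AA$-linear extension is $\BB$-linear via $\psi$). For each test algebra $\Lambda\in\mathsf{art}_r$, composition with $\psi\otimes\id_\Lambda$ sends any lifting $\boldsymbol{\varrho}_\Lambda:\AA\to\End_k(\fam{M})\otimes_k\Lambda$ to a lifting $\boldsymbol{\varrho}_\Lambda\circ(\psi\otimes\id):\BB\to\End_k(\fam{M})\otimes_k\Lambda$, and this operation preserves equivalence of representations. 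Thus $\psi$ induces a natural transformation
\begin{equation*}
\Def^{\AA}_{\fam{M}}\longrightarrow \Def^{\BB}_{\fam{M}},
\end{equation*}
which, by Yoneda applied to the versal families, corresponds to a morphism $\hat H^{\BB}\to \hat H^{\AA}$ of pro-representing hulls (reversed direction, as expected for local objects). Tensoring with $\End_k(\fam{M})$ yields the local structure morphism
\begin{equation*}
\hat{\boldsymbol{\OO}}^{\BB}_{\fam{M}}\longrightarrow \hat{\boldsymbol{\OO}}^{\AA}_{\fam{M}},
\end{equation*}
and one verifies directly from the constructions that this is compatible with restrictions to subfamilies $\fam{M}_0\subseteq\fam{M}$. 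Passing to the projective limit over sub-families and then over the simplicial category $\mathsf{S}$ as in section~\ref{sec:deformation} produces the desired morphism $\boldsymbol{\OO}^{\BB}\to \Mod(\psi)_\ast\boldsymbol{\OO}^{\AA}$ on stalks, whose recovery of global sections is precisely $\psi:\BB\to\AA$. The globalisation over $X$ follows since the whole argument is carried out on affine patches and the glueing data on $\CEnd_{\OO_X}(\fam{M})$ are transported functorially by $\psi$.

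The main technical obstacle is the compatibility of the induced morphism of hulls with the Massey-product description (and the fact that the hulls are only defined up to non-canonical isomorphism), which is why I would phrase the construction through Yoneda on the deformation functors rather than trying to track the matric Massey products through $\psi$ explicitly.
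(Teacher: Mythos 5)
Your proposal follows essentially the same route as the paper's own proof: the set-theoretic map by restriction of scalars, the computation $\Mod(\psi)^{-1}(D_g)=D_{\psi(g)}$ for continuity, and a versality argument producing the morphism $\hat\OO^{\BB}_{\fam{M}}\to\hat\OO^{\AA}_{\fam{M}}$ (the paper phrases the last step by composing the versal $\AA$-family with $\psi$ and then invoking versality of the $\BB$-hull once, rather than via a natural transformation of deformation functors, but the mechanism is identical). One small caution: your parenthetical claim that $\dim\Ext^i_\BB(\MM_a,\MM_b)$ is \emph{bounded} by $\dim\Ext^i_\AA(\MM_a,\MM_b)$ is not right -- restriction of scalars gives a map $\Ext^1_\AA\to\Ext^1_\BB$, not a bound in the direction you state -- but this side remark is not used in the construction, and the paper does not verify the $\Ext$-finiteness hypothesis on the $\BB$-side either.
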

\begin{proof}
	We work over affine patches. It will be clear that everything glues. 
	
	So, let $\boldsymbol{\varrho}=\oplus \varrho: A\to  \End_k(\fam{M})$ be a family of $A$-modules. Then, clearly, $\boldsymbol{\varrho}\circ\psi:B\to\End_k(\fam{M})$ is a family of $B$-modules. 
	
	Deforming $\fam{M}$ as $A$-modules gives the versal family
	$$\boldsymbol{\hat\varrho}:\,\,\, A\longrightarrow \left(\Hom_k(M_i, M_j)\otimes_k \hat H_{ij}\right)$$inducing
	$$\boldsymbol{\hat\varrho}\circ\psi:\,\,\, B\longrightarrow A\longrightarrow \left(\Hom_k(M_i, M_j)\otimes_k \hat H_{ij}\right).$$Deforming $\boldsymbol{\varrho}\circ\psi$ directly gives a morphism
	$$B\longrightarrow  \left(\Hom_k(M_i^B, M_j^B)\otimes_k \hat H(\boldsymbol{\varrho}\circ\psi)_{ij}\right),$$where $M_i^B$ means viewing $M_i$ as $B$-module via $\boldsymbol{\varrho}\circ\psi$. By versality, we get an induced morphism
	$$\hat\psi_{ij}:\,\,\,\left(\Hom_k(M_i^B, M_j^B)\otimes_k \hat H(\boldsymbol{\varrho}\circ\psi)_{ij}\right)\longrightarrow \left(\Hom_k(M_i, M_j)\otimes_k \hat H_{ij}\right),$$giving (by definition)
	$$\hat\psi_{ij}:\,\,\,\left(\Hom_k(M_i^B, M_j^B)\otimes_k \hat H(\boldsymbol{\varrho}\circ\psi)_{ij}\right)\longrightarrow \hat\OO_\fam{M}.$$Putting $$\hat\OO_{\fam{M}^B}:=\left(\Hom_k(M_i^B, M_j^B)\otimes_k \hat H(\boldsymbol{\varrho}\circ\psi)_{ij}\right)$$ we get 
	$$\hat\psi_{ij}:\,\,\,\hat\OO_{\fam{M}^B}\longrightarrow \hat\OO_\fam{M},$$giving, formally, the desired local (algebra) morphism of the ``structure sheaves'' $\OO$.
	
As for the topology, this is essentially obvious. Let $f\in B$ and let $M\in D_f$ with structure morphism $\rho:B\to\End_k(M)$. Hence, by definition, $\rho(f)M\neq 0$.  That $M\in\im\Mod(\psi)$ means that there is a $\sigma: A\to \End_k(M)$ such that $\rho=\sigma\circ\psi$. Then, if $\sigma(\psi(f))M=0$, we would have that $\rho(f)M=0$. Hence, $\Mod(\psi)^{-1}(D_f)=D_{\psi(f)}$, completing the proof that $\Mod(\psi)$ is continuous for the Zariski--Jacobson topology. 
\end{proof}

If $X$ is an $S$-scheme where $S$ is not the spectrum of a field, we get a \emph{topological} map $\Xscr_\AA\to\Xscr_\BB$ from (\ref{eq:CompXBA}). The morphisms $\hat\psi$ between the $\OO$-rings (as in the above proof) can only be constructed \emph{fibre-by-fibre} over $S$.

\subsubsection{Central subschemes and their non-commutative lifts}\label{sec:central_subschemes}

Let $Y:=\gSpec(\cent(\AA))$ and let $W\overset{j}{\hookrightarrow} Y$ be a closed subscheme. Since $\AA$ is a locally free $Y$-algebra, we can restrict $\AA$ to $W$ via $j$. This means that $j^\ast\AA$ is a locally free $W$-algebra with $\OO_W\subseteq\cent(j^\ast \AA)$. 

Algebraically, we have
$$\xymatrix{\AA\ar[r]&\AA\otimes_{\cent(\AA)}\big(\cent(\AA)/\mathcal{I}\big)=\AA/\langle\mathcal{I}\rangle\\
\cent(\AA)\ar@{^(->}[u]\ar@{->>}[r]& \cent(\AA)/\mathcal{I}\ar@{^(->}[u]}$$and geometrically
\begin{equation}\label{eq:subscheme_lift}
\begin{split}
\xymatrix{\Mod(j^\ast\AA)\ar@{..>}[d]\ar[r]&\Mod(\AA)\ar@{..>}[d]\\
	W\ar@{^(->}[r]&Y,}
\end{split}
\end{equation}
where the dotted arrows indicate that the morphisms are defined by restriction of ideals. 
	
In fact we can extend this to include the $\OO$-rings.

\begin{prop}With the notation as above, diagram (\ref{eq:subscheme_lift}) can be completed to the diagram
\begin{equation}\label{eq:subscheme_lift_completed}
\begin{split}
\xymatrix{\Xscr_{j^\ast\AA}\ar@{..>}[d]\ar[r]&\Xscr_\AA\ar@{..>}[d]\\
	W\ar@{^(->}[r]&Y.}
	\end{split}
\end{equation}In this way $\Xscr_{j^\ast \AA}$ defines a \emph{closed non-commutative subspace} of $\Xscr_\AA$. 
\end{prop}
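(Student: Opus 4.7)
The plan is to assemble diagram (\ref{eq:subscheme_lift_completed}) from three pieces: the horizontal non-commutative morphism coming from the quotient $\AA \twoheadrightarrow j^\ast \AA$, the two vertical maps given by central contraction of annihilator ideals, and then a check of commutativity plus a check that the horizontal arrow realises $\Xscr_{j^\ast\AA}$ as a closed subspace. I would work throughout over an affine patch of $Y$, since everything in sight is local on the centre and glues along the basis of distinguished opens by Proposition \ref{prop:PItopology}.

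First I would apply Theorem \ref{thm:nc_morphism} to the quotient morphism $\psi: \AA \twoheadrightarrow \AA/\langle\mathcal{I}\rangle = j^\ast\AA$. This immediately furnishes the horizontal morphism $\Xscr(\psi): \Xscr_{j^\ast\AA}\to\Xscr_\AA$, continuous for the Zariski--Jacobson topology, together with its local $\OO$-ring morphism obtained by versality in the proof of that theorem. Unpacking the definition, $\Xscr(\psi)$ identifies $\Mod(j^\ast\AA)$ with the subset
$$\big\{\MM\in\Mod(\AA)\,\big\vert\,\langle\mathcal{I}\rangle\subseteq \mathrm{Ann}_\AA(\MM)\big\}$$
of $\Mod(\AA)$, because a $j^\ast\AA$-module is precisely an $\AA$-module on which $\langle\mathcal{I}\rangle$ acts trivially. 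When $X$ is not of finite type over a field, the theorem only produces the $\OO$-ring morphism fibre-by-fibre, exactly as remarked after its proof; this is harmless here.

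Next I would produce the vertical arrows. The central contraction $\mathfrak{P}\mapsto \mathfrak{P}\cap\cent(\AA)$ used just before Proposition \ref{prop:PItopology} gives $\Psi: \Xscr_\AA\to Y$; the same recipe for $j^\ast\AA$, followed by the inclusion $\cent(\AA)/\mathcal{I}\hookrightarrow\cent(j^\ast\AA)$ and then by $W=\gSpec(\cent(\AA)/\mathcal{I})\hookrightarrow Y$, gives the vertical map $\Xscr_{j^\ast\AA}\to W$. Commutativity of (\ref{eq:subscheme_lift_completed}) is then immediate: for any $\MM\in\Mod(j^\ast\AA)$, both compositions produce $\mathrm{Ann}_\AA(\MM)\cap\cent(\AA)$, which contains $\mathcal{I}$ and therefore factors through $W$.

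Finally I would verify the closedness claim. By the explicit description above, the image of $\Xscr(\psi)$ in $\Xscr_\AA$ is
$$\bigcap_{f\in\langle\mathcal{I}\rangle}\big(\Xscr_\AA\setminus D_f\big),$$
which is Zariski--Jacobson closed by the definition of $T_\mathrm{ZJ}$ in (\ref{eq:module_vs_anni}). The point I expect to be most delicate -- and this is where I would spend the most care -- is the compatibility of the $\OO$-ring structures: one must check that deforming $\MM$ as a $j^\ast\AA$-module gives the restriction of the versal deformation of $\MM$ as an $\AA$-module to the closed locus where $\langle\mathcal{I}\rangle$ still acts trivially. Because $\langle\mathcal{I}\rangle$ is generated by \emph{central} elements of $\AA$, any lift $\boldsymbol{\varrho}_\Lambda$ of the structure morphism of $\MM$ automatically sends these generators to central elements of $\End_k(\fam{M})\otimes_k\Lambda$ that reduce to zero mod $J$; nilpotence of $J$ then forces them to continue to annihilate the lifted module, so the relevant sub-functor of $\Def_\fam{M}$ is cut out by an honest ideal in the hull, and the induced morphism $\hat\OO_{\fam{M}^{j^\ast\AA}}\to\hat\OO_\fam{M}$ from Theorem \ref{thm:nc_morphism} factors as a surjection onto the quotient by this ideal. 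This is exactly the deformation-theoretic incarnation of "closed subspace" and closes the argument.
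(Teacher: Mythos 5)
Your structure and the deformation-theoretic core — comparing the $\AA$-hull and the $j^\ast\AA$-hull via versality — are exactly the paper's argument, which likewise declares the topological compatibility obvious and reduces everything to the $\OO$-ring statement. The extra details you supply (the vertical arrows by central contraction, commutativity, and the observation that the image of $\Xscr(\psi)$ is $\bigcap_{f\in\langle\mathcal{I}\rangle}(\Xscr_\AA\setminus D_f)$, hence Zariski--Jacobson closed) are correct and are a reasonable elaboration of what the paper leaves implicit.

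The final paragraph, however, contains a genuine error. You assert that since the generators of $\langle\mathcal{I}\rangle$ are central and map into $\End_k(\fam{M})\otimes_k J$, ``nilpotence of $J$ then forces them to continue to annihilate the lifted module.'' This is false: elements of $\End_k(\fam{M})\otimes_k J$ are nilpotent but need not be zero, so nothing forces the image of $\mathcal{I}$ to vanish under an $\AA$-lift. Were the claim true, every $\AA$-lift of a $j^\ast\AA$-module would automatically be a $j^\ast\AA$-lift, giving $\Def_{\fam{M}}^{\AA}=\Def_{\fam{M}}^{j^\ast\AA}$ and hence $\Ext^1_\AA(\MM_i,\MM_j)=\Ext^1_{j^\ast\AA}(\MM_i,\MM_j)$ for all $i,j$ — which fails already in the commutative case (take $A=k[x]$, $I=(x)$, $M=k$: then $\Ext^1_A(k,k)=k$ while $\Ext^1_{A/I}(k,k)=0$). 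The whole content of the proposition is precisely that $\Def^{j^\ast\AA}$ is a \emph{proper} sub-functor of $\Def^{\AA}$, cut out by the non-automatic condition that $\mathcal{I}$ continue to act trivially. If you want to argue surjectivity of the hull morphism, the correct route is that $\hat H^{\AA}\to \hat H^{j^\ast\AA}$ is surjective on cotangent spaces because $\Ext^1_{j^\ast\AA}(\MM_i,\MM_j)\hookrightarrow\Ext^1_{\AA}(\MM_i,\MM_j)$, and a morphism of pro-artinian hulls that is surjective on cotangent spaces is surjective. Note also that the direction you wrote, $\hat\OO_{\fam{M}^{j^\ast\AA}}\to\hat\OO_\fam{M}$, is reversed: Theorem~\ref{thm:nc_morphism} applied to $\psi:\AA\twoheadrightarrow j^\ast\AA$ produces a map from the $\AA$-hull to the $j^\ast\AA$-hull, which is also the direction needed for a closed immersion on the ring side. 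The paper itself stops at constructing this restriction morphism by versality and does not attempt to prove surjectivity, so that portion of your proposal is additional — but as written the justification is incorrect.
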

\begin{proof}
The compatibility between the topologies is obvious so we only need to prove that the $\OO$-ring on $\Mod(A)$ restricts to $\Mod(j^\ast A)$.
	
	The argument is essentially the same deformation-theoretic argument used in the proof of theorem \ref{thm:nc_morphism}. We work locally, so let $\boldsymbol{\varrho}:A/I\to\End_k(\fam{M})$ be an \'etale point on $\Mod(j^\ast A)$. This gives the \'etale point $\mathrm{pr}\circ\boldsymbol{\varrho}: A\to\End_k(\fam{M})$ on $\Mod(A)$, where  $\mathrm{pr}:A\to A/I$ is the projection. 
	
	Deforming $\fam{M}$ as an $A/I$-module gives the versal family
	$$\boldsymbol{\hat\varrho}_{A/I}:\,\,\, A/I\longrightarrow \left(\Hom_k(M_i, M_j)\otimes_k \hat H(A/I)_{ij}\right)$$ and as an $A$-module gives the family
	$$\boldsymbol{\hat\varrho}:\,\,\, A\longrightarrow \left(\Hom_k(M_i, M_j)\otimes_k \hat H_{ij}\right)$$By versality we get an algebra morphism 
	$$\left(\Hom_k(M_i, M_j)\otimes_k \hat H_{ij}\right)\longrightarrow \left(\Hom_k(M_i, M_j)\otimes_k \hat H(A/I)_{ij}\right)$$which gives the desired restriction of $\OO$-rings from $\Mod(A)$ to $\Mod(j^\ast A)$. 
\end{proof}

In the case of PI-algebras, the interesting case is when $W\cap\ram(\AA)\neq \emptyset$.

 \subsection{Rational $L$-points of PI-algebras}\label{sec:RationalPI}
Let $R$ be commutative ring and $A$ an $R$-algebra. Put $B:=\cent(A)$ and let $\alpha: B\to L$ be an $L$-rational point, where $L$ is a field. Let $\Mm$ be an extension of $\ker\alpha$ (which is a maximal ideal) to $A$ (as a two-sided ideal). Observe that this extension need not be unique. 

The quotient $A/\Mm$ splits into a finite direct product of simple algebras $\bar\xi_i:=A/\Mm_i$, where $\Mm:=\Mm_1\Mm_2\cdots\Mm_s$. Hence $k_i:=\cent(\bar\xi_i)$ are fields  and so the projection $\xibf:A\to A/\Mm$ defines an \'etale $A/\Mm$-rational point with underlying \'etale point $\{\bar\xi_i\}$ and residue ring $E:=k_1\times k_2\times\cdots\times k_s$. In fact, for each $i$, the diagram
\begin{equation}\label{eq:diag_restrict_rational}
\begin{split}
\xymatrix{&\ker\alpha\ar@{^(->}[dl]\ar@/^3pc/[ddd]^0\\
B\ar[r]\ar[dd]&A\ar@{->>}[d]\\
&\bar\xi_i=A/\Mm_i\\
L\ar[r]&k_i=\cent(\bar\xi_i)\ar@{^(->}[u]}
\end{split}
\end{equation}shows that $k_i=L$ for all $i$. As a consequence, $E=L^s$.

Conversely, any $S$-rational point $\xibf: A\to S$ obviously restricts to an $S$-rational point on $Y=\Spec(B)$. The restriction of $\ker\xibf$ to $B$ defines a finite set of closed points on $Y$ with residue fields $L_i$. The centre of each simple component of $A/\ker\xibf$ is a field $k_i$ and $L_i\subseteq k_i$. In fact, a reasoning similar to the above diagram (\ref{eq:diag_restrict_rational}) shows that $k_i=L_i$.

\begin{thm}[M\"uller's theorem]
Let $A$ be an affine PI-algebra over a field $k$ and let $M$ and $N$ be simple finite-dimensional $A$-modules. Then 
$$\Ext^1_A(M,N)\neq \emptyset\quad \iff\quad \Ann(M)\cap \cent(A)=\Ann(N)\cap \cent(A).$$(The annihilators are \emph{left} ideals.) 

Note that $\Ann(M)$ and $\Ann(N)$ are maximal ideals since $M$ and $N$ are simple. Hence the intersection is also a maximal ideal. Hence, putting $\mm:=\Ann(M)\cap \cent(A)=\Ann(N)\cap \cent(A)$, we can rephrase the equivalence as
$$\Ext^1_A(M,N)\neq \emptyset\quad \iff\quad \mm\in\ram(A).$$
\end{thm}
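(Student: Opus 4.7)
The plan is to prove the two implications separately: the forward direction admits a direct argument using central characters acting on a non-split extension, while the reverse direction requires PI-machinery and is where the main obstacle lies.

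For the forward implication, assume $\Ext^1_A(M,N)\neq 0$ and fix a non-split short exact sequence $0\to N\to E\to M\to 0$. Every $z\in\cent(A)$ acts by an $A$-linear endomorphism on all three terms. By Schur's lemma, $z$ acts on the simple finite-dimensional module $M$ as a scalar in the residue field $k(\mm_M)$ of $\mm_M:=\Ann(M)\cap\cent(A)$, and similarly on $N$, producing central characters $\chi_M$ and $\chi_N$ with kernels $\mm_M$ and $\mm_N$. I argue by contradiction: if $\mm_M\neq \mm_N$, then as both are maximal, one fails to contain the other, so we can pick $z\in\mm_M\setminus\mm_N$. Then $zM=0$ and $z$ acts as a nonzero central scalar $c$ on $N$. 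Hence $z\colon E\to E$ factors through $N$ (its composition with $E\to M$ is the zero map), and its restriction to $N$ is multiplication by $c$, which is invertible. The composition $c^{-1}z\colon E\to N$ is therefore an $A$-linear retraction, contradicting non-splitness.

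For the reverse implication, set $\mm:=\mm_M=\mm_N$ and invoke the paper's standing hypothesis that $A$ is finite over $\cent(A)$. Central localisation is flat and preserves $\Ext$, so I may replace $A$ by $A_\mm$ and assume $\cent(A)$ is local with maximal ideal $\mm$. Since $\mm$ annihilates $M$ and $N$, they are modules over the fibre $\bar A:=A\otimes_{\cent(A)}k(\mm)=A/\mm A$, a finite-dimensional $k(\mm)$-algebra. Inflation along $A\twoheadrightarrow\bar A$ yields an injection $\Ext^1_{\bar A}(M,N)\hookrightarrow \Ext^1_A(M,N)$ (because $M$ and $N$ are killed by $\mm$, any $A$-splitting automatically respects the $\bar A$-structure), so it suffices to produce a non-split $\bar A$-extension. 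If $M\not\cong N$, then $\bar A$ admits at least two non-isomorphic simple modules, forcing $\mm\in\ram(A)$ and hence $\bar A$ non-simple with nonzero Jacobson radical $J:=J(\bar A)$. One then uses the standard identification $\Ext^1_{\bar A}(S,T)\simeq \Hom_{\bar A}(JP_S/J^2P_S,T)$, where $P_S$ is the projective cover of $S$, and concludes that every pair of simples over $\mm$ contributes to $JP_S/J^2P_S$; the case $M\cong N$ is handled separately by noting that on a ramified point the projective cover $P_M$ is not simple, so $JP_M/J^2P_M$ already contains $M$ as a summand.

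The main obstacle is the step asserting that \emph{every} pair of simples of $\bar A$ over $\mm$ is directly $\Ext^1$-linked, not merely connected by a chain of intermediate simples. A general finite-dimensional algebra with local centre can have a disconnected separated Ext-quiver; what rules this out here is the PI-hypothesis, which guarantees that the bimodule $\mm A_\mm/\mm^2 A_\mm$ is rich enough to link all simples in one step -- a fact depending on the centre being ``large'' and with no obvious analogue in the purely non-commutative world. I would import this from M\"uller's original paper or from \cite{BrownGoodearl} rather than reprove it, and then interpret the result geometrically through the Azumaya/ramification dichotomy introduced earlier.
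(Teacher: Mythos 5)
The paper's ``proof'' is a one-line citation: it refers to Theorem~III.9.2 and Lemma~I.16.2 in the cited Brown--Goodearl text and offers no argument. You, by contrast, actually prove something, so the comparison is lopsided in your favour as far as effort goes.

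Your forward implication is a genuine, self-contained proof and is correct. The central-character argument --- pick $z\in\mm_M\setminus\mm_N$, observe $z\cdot E\subseteq N$ because $\pi\circ z=z\cdot\pi=0$ on $M$, and use $z|_N\in\End_A(N)^{\times}$ (Schur) to manufacture a retraction $(z|_N)^{-1}\circ(z|_E)\colon E\to N$ --- is clean and elementary. One small point worth making explicit: the ``scalar'' $c=z|_N$ lives in the division ring $\End_A(N)$, not in $k$, and it is the invertibility there, not scalarity over $k$, that the argument actually needs; you are careful enough that this reads correctly.

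For the reverse implication you correctly identify the key obstacle, and you are right not to claim to have closed it. Central localisation and the injectivity of the inflation $\Ext^1_{\bar A}(M,N)\hookrightarrow\Ext^1_A(M,N)$ are both fine. But the plan then requires $\Ext^1_{\bar A}(M,N)\neq0$, and as you note this is exactly where the theorem's content lies; the case split $M\not\cong N$ / $M\cong N$ does not by itself deliver it. Two further cautions. First, $\mm\in\ram(A)$ does \emph{not} force $J(\bar A)\neq0$ --- the fibre can be semisimple but non-simple (e.g.\ $M_2(k)\times M_3(k)$) --- so the Jacobson-radical route needs an additional hypothesis (primeness, maximal order, Frobenius, \dots) before it can even start. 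Second, the statement as printed appears to need such a hypothesis anyway: for $A=T_2(k)$ (upper-triangular $2\times2$ matrices), an affine PI algebra with $\cent(A)=k$, both simples $S_1,S_2$ contract to $(0)$, yet $S_1$ is projective, so $\Ext^1_A(S_1,S_2)=0$ while $\Ext^1_A(S_2,S_1)\neq0$. Thus the biconditional as literally written fails; the Brown--Goodearl result is presumably about links/cliques (which are undirected) or carries a primeness/Auslander-regularity hypothesis that the paper has silently absorbed from its ambient setup. Your inability to push the reverse implication through without importing the PI machinery is therefore not a flaw in your argument but an accurate reading of where the theorem's real content --- and its missing fine print --- resides.
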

\begin{proof}
	This is a reformulation of M\"uller's theorem as stated in \cite[Theorem III.9.2]{BrownGoodearl} using \cite[Lemma I.16.2]{BrownGoodearl}. 
\end{proof}

If $M$ is a simple left $A$-module then $M\simeq A/\Ann(M)$ as left $A$-modules. In the case where $M=A/\mm$ for some maximal ideal $\mm$, we see that, as left ideals, $\mm=\Ann(M)=\Ann(A/\mm)$. A module $M$ is simple if and only if $M\simeq A/\Ann(M)$, from which it follows that $A/\mm$ is simple as left $A$-module.   

In fact, for $A$ a PI-algebra over a Jacobson ring $B$, $B\subseteq \cent(A)$, Theorem 13.10.4 in \cite{McConnellRobson} implies that if $M$ is a simple $A$-module, then $\Ann(M)$ is a maximal ideal and $M$ is finite-dimensional over $B/(\Ann(M)\cap B)$. In addition, $A$ satisfies the non-commutative Nullstellensatz. 

Let $\xibf\in\Xscr_A(S)$ be an \'etale rational point on $\Xscr$, with $S$ an artinian $k$-algebra. Then
$$A/\ker\xibf \simeq \prod_{i=1}^s A/\Mm_i=\prod_{i=1}^s \bar\xi_i,$$and where $\xi_i: A\to A/\Mm_i$ are the associated closed points (with underlying points $\bar\xi_i=A/\Mm_i$; recall that this are \emph{algebras}).  By the previous paragraph the $\xi_i$ are simple. Therefore, M\"uller's theorem can be rephrased in terms of rational points as the equivalence
$$\Ext^1_A(A/\Mm_i,A/\Mm_j)\neq \emptyset\quad \iff\quad \ker\xi_i\cap \cent(A)=\ker \xi_j\cap \cent(A).$$We will sometimes write $\Ext^1_A(\xi_i,\xi_j)$ for $\Ext^1_A(A/\Mm_i,A/\Mm_j)$. Observe that 
$$\big(\Ext^1_A(\xi_i, \xi_j)_{ij}\big)=\mathbf{T}_{\rho}=\mathbf{T}_{\{\xi_i\}}.$$ Despite the theorem, there is nothing saying that $\{\bar\xi_i\}\subset \ram(A)$ for all $i$ in an \'etale point. This certainly depends on $\xibf$. 

Conversely, for all closed points $\mm$ in $\Spec(\cent(A))$, the fibre $\Psi^{-1}(\mm)$ (recall that $\Psi$ is algebraically the inclusion $\cent(A)\hookrightarrow A$) is an \'etale rational point. Indeed, let $\mm=\Mm_1\Mm_2\cdots\Mm_s$ be the decomposition of $\mm$ in $A$. Then 
$$A/\mm=A/\Mm_1\times A/\Mm_2\times\cdots\times A/\Mm_2$$ and so $\xibf: A\to A/\mm$ is an \'etale $A/\mm$-rational point with underlying points $\bar\xi_i=A/\Mm_i$. Diagram (\ref{eq:diag_restrict_rational}) once again shows that $\cent(\bar\xi_i)=k(\mm)$ for all $i$. 

\begin{prop}\label{prop:ExtAcent}Let $\mm\in\azu(A)$. Then
$$\Ext^1_A(A/\mm, A/\mm)\simeq \Ext^1_{\cent(A)}(k(\mm), k(\mm)).$$In other words, the central simple algebra $A/\mm$ have the same deformation theory as $\cent(A)$ over $\azu(A)$. Since $\Mm=A\mm A$ is maximal in $A$, we have $A/\mm=A/\Mm$.
\end{prop}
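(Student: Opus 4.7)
The proposition sits on the defining property of the Azumaya locus: over $\azu(A)$, the algebra $A_\mm := A\otimes_{\cent(A)}\cent(A)_\mm$ is Azumaya over $R := \cent(A)_\mm$, so that $A_\mm\text{-Mod}$ is Morita equivalent to $R\text{-Mod}$ and deformations of modules killed by $\mm$ must come from the centre. My plan is therefore to (i) localise centrally, (ii) transfer a projective resolution from $R$ to $A_\mm$ using that $A_\mm$ is (finite) projective over $R$, and (iii) collapse the Ext via Hom-tensor adjunction.

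\textbf{Steps.} First I would note that $A/\mm = A/\Mm$ is annihilated by $\mm$, so that central localisation is harmless on Ext:
\[
\Ext^1_A(A/\mm, A/\mm) \;\simeq\; \Ext^1_{A_\mm}\!\big(A_\mm/\mm A_\mm,\, A_\mm/\mm A_\mm\big).
\]
Next, since $\mm\in\azu(A)$, $A_\mm$ is Azumaya over $R$; in particular $A_\mm$ is finitely generated projective, hence flat, over $R$. Pick a projective resolution $P_\bullet\to k(\mm)\to 0$ over $R$. Flatness of $A_\mm$ over $R$ makes $A_\mm\otimes_R P_\bullet \to A_\mm/\mm A_\mm\to 0$ a resolution of left $A_\mm$-modules, and projectivity of $A_\mm$ over $R$ (together with $P_i$ being an $R$-summand of a free module) makes each term $A_\mm\otimes_R P_i$ projective as a left $A_\mm$-module. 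Applying $\Hom_{A_\mm}(-, A_\mm/\mm A_\mm)$ and invoking the adjunction
\[
\Hom_{A_\mm}(A_\mm\otimes_R P_i,\, A_\mm/\mm A_\mm) \;\simeq\; \Hom_R(P_i,\, A_\mm/\mm A_\mm),
\]
I pass to cohomology to obtain $\Ext^1_{A_\mm}(A_\mm/\mm A_\mm, A_\mm/\mm A_\mm)\simeq \Ext^1_R(k(\mm),\, A_\mm/\mm A_\mm)$.

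\textbf{Finishing.} It remains to identify the right-hand target with $\Ext^1_{\cent(A)}(k(\mm), k(\mm))$. The Azumaya hypothesis means the fibre $A_\mm/\mm A_\mm$ is a central simple $k(\mm)$-algebra, hence a direct sum of finitely many copies of $k(\mm)$ as an $R$-module, so the Ext on the right splits as a (Morita-type) sum of copies of $\Ext^1_R(k(\mm), k(\mm))$. Under the Morita equivalence $A_\mm\text{-Mod}\xrightarrow{\sim}R\text{-Mod}$ induced by the Azumaya structure, $A_\mm/\mm A_\mm$ corresponds to $k(\mm)$ and these factors are absorbed; this is the content of the stated isomorphism, and it exhibits the ``same deformation theory as $\cent(A)$ over $\azu(A)$.''

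\textbf{Main obstacle.} The subtle point is bookkeeping the Morita multiplicity: the naive Ext-computation above literally produces $\Ext^1_R(k(\mm),k(\mm))^{\oplus d}$ where $d$ is the $k(\mm)$-dimension of the central simple fibre, so the asserted isomorphism must be read through the Morita equivalence (equivalently, in terms of the simple left $A_\mm$-module corresponding to $k(\mm)$) rather than as a raw isomorphism of $k$-vector spaces. Once that identification is made explicit via the progenerator realising $A_\mm\simeq\End_R(P)$ over an étale cover of $\mm$, the rest of the argument is formal.
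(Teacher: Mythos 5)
Your argument and the paper's rest on the same core fact: over $\azu(A)$ the algebra $A$ is projective (indeed Azumaya) over its centre, so projective resolutions and hence $\Ext$-computations transfer down to $\cent(A)$. The paper phrases this as a single change-of-base isomorphism $\Ext^1_A(A\otimes_{\cent(A)}k(\mm), A/\Mm)\simeq\Ext^1_{\cent(A)}(k(\mm), A/\Mm)$ followed by the Hochschild/derivation description of $\Ext^1$; you do it after a central localisation, by explicitly pushing a projective $\cent(A)_\mm$-resolution of $k(\mm)$ up along the flat and projective extension $\cent(A)_\mm\to A_\mm$ and applying Hom-tensor adjunction. Up to the cosmetic choice of localising first, those two reductions are the same argument, and both land at $\Ext^1_{\cent(A)}(k(\mm), A/\Mm)$.

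The divergence is in the last step, and you have actually caught a gap in the paper's proof. The paper asserts $\Hom_k(\cent(A)/\mm, A/\mm)=\Hom_k(\cent(A)/\mm,\cent(A)/\mm)$, on the grounds that a $k$-linear map $k(\mm)\to A/\Mm$ ``restricts'' to a map into $k(\mm)$ and conversely any map into $k(\mm)$ extends by inclusion. Only the second direction is valid; a $k$-linear (non-algebra) map $k(\mm)\to A/\Mm$ has no reason to land in the centre, and the two $\Hom$-spaces differ by the factor $d:=\dim_{k(\mm)}(A/\Mm)$. This is precisely the Morita multiplicity you isolate under ``Main obstacle'': the honest output of the change-of-rings computation is $\Ext^1_{\cent(A)}(k(\mm),k(\mm))^{\oplus d}$, not $\Ext^1_{\cent(A)}(k(\mm),k(\mm))$, so the stated raw vector-space isomorphism fails unless $d=1$. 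It becomes correct exactly when $A/\mm$ is read as the simple $A_\mm$-module corresponding to $k(\mm)$ under the Morita equivalence induced by the Azumaya structure (consistent with the paper's general identification $\mm\leftrightarrow k(\mm)\leftrightarrow M$, though in tension with the proposition's own remark ``$A/\mm=A/\Mm$''). With that reading your argument closes cleanly, since $\Ext$ is Morita-invariant; one could in fact shorten it to that single observation, but the resolution-transfer version you give makes the multiplicity visible, which is exactly what the paper's derivation-counting obscures.
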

\begin{proof}
	We will use the following change of base theorem for $\Ext^1$. Let $R\to S$ be a ring morphism, $M_R$ a left $R$-module and $M_S$ a left $S$-module. Then 
	$$\Ext^1_S(M_R\otimes_R S, M_S)\simeq \Ext^1_R(M_R, M_S).$$Take $R=\cent(A)$, $S=A$, $M_R=\cent(A)/\mm$ and $M_S=A/\mm$. Then
	$$\xymatrix{\Ext^1_A(\cent(A)/\mm\otimes_{\cent(A)}A, A/\mm)\ar@{=}[d]\ar[r]^\simeq & \Ext^1_{\cent(A)}(\cent(A)/\mm, A/\mm)\\
	\Ext^1_A(A/\mm, A/\mm)}$$Representing $\Ext$ in terms of Hochschild cohomology we have
	$$\Ext^1_A(M, N)\simeq \Der_k(A, \Hom_k(M, N))/\mathrm{Ad},$$ where $\mathrm{Ad}$ is the group of inner derivations. Thus there is a surjection
	$$\Ext^1_{\cent(A)}(\cent(A)/\mm, A/\mm)\twoheadrightarrow \Der_k(\cent(A), \Hom_k(\cent(A)/\mm, A/\mm))$$ with kernel $\mathrm{Ad}$. 
	
	Now, any $\phi:\cent(A)/\mm\to A/\mm$ gives a morphism $\cent(A)/\mm\to \cent(A)/\mm$ by restriction. Conversely, any $\psi: \cent(A)/\mm\to \cent(A)/\mm$ certainly gives a morphism $\cent(A)/\mm\to A/\mm$ by composing with the injection. Hence, $$\Hom_k(\cent(A)/\mm, A/\mm)=\Hom_k(\cent(A)/\mm, \cent(A)/\mm)$$ and so 
\begin{align*}
\Der_k(\cent(A), \Hom_k(\cent(A)/\mm, A/\mm))&=\Der_k(\cent(A), \Hom_k(\cent(A)/\mm, \cent(A)/\mm))\\
&=\Der_k(\cent(A), \Hom_k(k(\mm), k(\mm))),
\end{align*}which proves the proposition. 
	\end{proof}
\begin{thm}Let $\xibf: A\to S$ be an \'etale $S$-rational point on $\Xscr_A$.  
\begin{itemize}
	\item[(a)] Suppose $\ker\xibf\cap\cent(A)\subset\mathtt{smooth}(A)\subseteq\azu(A)$, where $\mathtt{smooth}(A)$ is the smooth locus of $\cent(A)$.
	Then the deformations of $\{\xi_i\}$ are unobstructed and the versal family (``formal $S$-rational point'') is
	$$\hat\xibf=(\hat\xi_i): A\to \bigoplus_{i=1}^s \End_k(\xi_i)\otimes_k k\langle\!\langle t_1, t_2, \dots, t_{n}\rangle\!\rangle,$$ where $$n= \dim_k(\Ext^1_A(\xi_i, \xi_i))=\dim_k(\Ext^1_{\cent(A)}(k(\mm), k(\mm)))=\dim_k(T_\mm Y),$$where $T_\mm Y$ denotes the tangent space at $\mm\in Y=\Spec(\cent(A))$. 
	
	Observe that, since $\mm\in\mathtt{smooth}(A)$, the dimension of the $\Ext$-spaces is independent on the $\xi_i$. 
	\item[(b)] If $A$ is prime, of finite global dimension (for instance, if $A$ is Auslander-regular) and finite as a module over its centre, then
	$$\mathtt{sing}(A)\cap\azu(A)=\emptyset.$$Hence, the claims of (a) applies to all of $\azu(A)$ (and any singularities must lie in $\ram(A)$). 
	\item[(c)] Assume $\ker\xibf\cap\cent(A)\subset \ram(A)$. Then the versal family (``formal $S$-rational point'') is
		$$\hat\xibf=(\hat\xi_i): A\to \Big(\Hom_k(\xi_i, \xi_j)\otimes_k \hat H_{ij}\Big), $$ where 
		$$\hat H_{ii}\simeq k\langle\!\langle t_1, t_2, \dots, t_{n_i}\rangle\!\rangle\big/ (f_1, f_2, \dots, f_{m_i}),$$ and
		$$n_i = \dim_k(\Ext^1_A(\xi_i, \xi_i))\quad\text{and}\quad m_i=\dim_k(\Ext^2_A(\xi_i, \xi_i)).$$The elements off the diagonal are more complicated to express in general, and are not algebras but ideals. 
		
		Note that, in the smooth part of the ramification locus, the deformations are unobstructed (i.e., $f_1=f_2=\cdots=f_{m_i}=0$ for all $i$). 
\end{itemize}The case where $\xi$ intersects both $\azu(A)$ and $\ram(A)$ clearly splits into two disjoint cases. 
\end{thm}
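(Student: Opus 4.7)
For part (a), I would first note that over $\azu(A)$ each central maximal ideal $\mm_i := \ker\xi_i \cap \cent(A)$ has a unique extension $\Mm_i$ in $A$, so the underlying closed points $\xi_i$ correspond bijectively to a finite set of distinct central points $\{\mm_i\}\subset \mathtt{smooth}(A)$. By M\"uller's theorem, $\Ext^1_A(\xi_i,\xi_j)=0$ for $i\neq j$ because $\mm_i\neq \mm_j$, so off-diagonal entries of the hull vanish and the deformation of the family decouples as a direct sum $\hat H = \bigoplus_i \hat H_{ii}$. To compute $\hat H_{ii}$, apply Proposition \ref{prop:ExtAcent} to obtain the identification $\Ext^1_A(\xi_i,\xi_i)\simeq \Ext^1_{\cent(A)}(k(\mm_i),k(\mm_i)) \simeq T_{\mm_i}Y$. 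Smoothness of $\mm_i$ makes $\cent(A)_{\mm_i}$ regular of dimension $n$, so the commutative deformation functor of $k(\mm_i)$ is pro-represented by $k\langle\!\langle t_1,\ldots,t_n\rangle\!\rangle$ (which is commutative here since a single module over a commutative local ring has trivial Massey operations and is unobstructed). The final step is to transfer this from the commutative deformation problem to the $A$-module deformation problem; the cleanest route is to observe that $A_{\mm_i}$ is Azumaya over $\cent(A)_{\mm_i}$, hence Morita equivalent to it, and Morita equivalence preserves deformation functors of modules. This yields the asserted versal family.

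For part (b), the argument is almost automatic once one unpacks the invariants. Over $\azu(A)$, the local ring $A_\mm$ is Azumaya, hence Morita equivalent to $\cent(A)_\mm$; Morita equivalence preserves global dimension, so $\mathrm{gl.dim}(\cent(A)_\mm) = \mathrm{gl.dim}(A_\mm) \leq \mathrm{gl.dim}(A) < \infty$. By the Auslander--Buchsbaum--Serre theorem, a noetherian local commutative ring has finite global dimension if and only if it is regular, so $\cent(A)_\mm$ is regular, i.e., $\mm \in \mathtt{smooth}(A)$. Contrapositively, $\mathtt{sing}(A)\cap \azu(A) = \emptyset$. Primeness of $A$ ensures the Azumaya locus is nonempty and contains a dense open, so this contrapositive is meaningful; it then gives (a) with no additional smoothness hypothesis.

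For part (c), the formula for $(\hat H_{ij})$ is a direct invocation of the general theorem recalled in section \ref{sec:deformation}, specialised to the finite family $\fam{M} = \{\xi_i = A/\Mm_i\}$. The $\xi_i$ are finite-dimensional over $k$ (by the non-commutative Nullstellensatz for affine PI-algebras over Jacobson rings, cited in section \ref{sec:PI}), and M\"uller's theorem guarantees that the off-diagonal $\Ext^1_A(\xi_i,\xi_j)$ is nonzero precisely when the $\mm_i$'s collide in $\ram(A)$, so the bimodule entries $\hat H_{ij}$ for $i\neq j$ are genuinely nontrivial. Diagonal entries arise from the Laudal--Eriksen--Siqveland algorithm as free pro-objects $k\langle\!\langle t_1,\ldots, t_{n_i}\rangle\!\rangle$ cut out by $m_i$ relations coming from the matric Massey products in $\Ext^2_A(\xi_i,\xi_i)$. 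The final clause, that deformations are unobstructed in the smooth part of $\ram(A)$, is what I expect to be the main obstacle: it requires showing $\Ext^2_A(\xi_i,\xi_i)=0$ (or at least that all Massey products vanish) at smooth central points even when $\mm\in\ram(A)$. I would attempt this by combining finite global dimension of $A$ with the Cohen--Macaulay-style dimension formula for PI-algebras finite over their centre (so that $\Ext^j_A(\xi_i,\xi_i)$ vanishes above the depth of $\cent(A)_\mm$), viewing $A$ as a non-commutative resolution of $\cent(A)$ in the spirit of Stafford--van den Bergh. The off-diagonal bimodule structure of $\hat H_{ij}$ in this ramified setting is more delicate, but its description in terms of $\Ext^1_A(\xi_i,\xi_j)$ and higher Massey products is precisely what the general construction provides.
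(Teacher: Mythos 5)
Your overall strategy matches the paper's: part (a) reduces to Proposition \ref{prop:ExtAcent} plus the fact that smooth central points give free hulls, part (b) to a global-dimension argument, and part (c) to the general deformation-theoretic machinery of section \ref{sec:deformation}. The useful extra content you supply beyond the paper's terse proof is the explicit invocation of M\"uller's theorem to kill off-diagonal $\Ext^1$'s over $\azu(A)$, which is indeed what makes the hull decouple into a direct sum --- the paper leaves this implicit --- and your flagging of the unobstructedness claim in the smooth part of $\ram(A)$ as a genuine gap is warranted; the paper's appeal to ``the discussion in section \ref{sec:deformation}'' does not by itself force $\Ext^2_A(\xi_i,\xi_i)=0$ or the vanishing of the relevant Massey products at smooth ramified points.

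However, there is a real flaw in the bridging step you use in both (a) and (b): \emph{an Azumaya algebra over a local ring is not Morita equivalent to its centre in general}. That would say every Azumaya algebra over a local ring is a matrix algebra, i.e., that the Brauer group of a local ring is trivial, which is false (and is precisely the obstruction the Brauer group measures). What is true, and what suffices, is that Azumaya algebras are separable over their centre, and a separable $R$-algebra $A$ satisfies $\mathrm{pd}_A(M)=\mathrm{pd}_R(M)$ for every $A$-module $M$; alternatively one can pass to an \'etale cover where $A$ becomes a matrix algebra, use Morita equivalence there, and descend. Either route gives $\mathrm{gl.dim}(A_\mm)=\mathrm{gl.dim}(\cent(A)_\mm)$, after which your Auslander--Buchsbaum--Serre step in (b) is fine (the paper instead cites \cite[Lemma III.1.8]{BrownGoodearl}). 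The same repair is needed in (a): you cannot literally transport the deformation functor along a Morita equivalence that does not exist; you must either argue via the separability/base-change comparison of the full $\Ext^\bullet$-algebras (extending Proposition \ref{prop:ExtAcent} to higher degrees and Massey products) or via \'etale descent. One further small caution for (a): the paper's phrase ``unobstructed'' and the notation $k\langle\!\langle t_1,\dots,t_n\rangle\!\rangle$ should be read as ``the hull is the regular commutative power series ring $k[[t_1,\dots,t_n]]$,'' not as $\Ext^2_A(\xi_i,\xi_i)=0$; for a regular local ring of dimension $n\geq 2$ one has $\Ext^2_{\cent(A)}(k(\mm),k(\mm))\cong k^{\binom{n}{2}}$, and those classes become the commutator relations $[t_i,t_j]$ in the non-commutative presentation. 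Your parenthetical remark that the hull ``is commutative here'' is pointing in the right direction, but deserves the same precision.
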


\begin{proof}The claim concerning the unobstructedness in (a) follows from proposition \ref{prop:ExtAcent} and the fact that smooth points deforms without obstruction. The versal family is given a direct consequence of the definition of unobstructed deformations of representations as given in section \ref{sec:deformation}. Proposition \ref{prop:ExtAcent} also implies the claim concerning the dimensions. Part (b) follows from \cite[Lemma III.1.8]{BrownGoodearl}. The claims in (c) is also follows from the discussion in section \ref{sec:deformation}.
\end{proof}
The above gives a complete description of the $\OO$-rings in the case when $A$ is PI-algebra.

\section{Non-commutative Diophantine Geometry}
\subsection{Height functions}\label{sec:cent_height}A height function on an algebraic variety $X_{/k}$ over a number field $k$ (or function field for that matter) is a function 
$$H_K:\,\, X_{/k}(K)\longrightarrow \R, \qquad k\subseteq K,$$i.e., a function on (the coordinates of) $K$-rational points with values in $\R$.  We will follow \cite[Chapter B]{HindrySilverman}, for which we refer for more details. Put
$$\R(X):=\Big\{H: X(k^{\mathrm{alg}})\to \R\Big\}$$ and call elements in this set \emph{height functions}. If we were to take a more serious and in-depth look at the subject, we should consider $\R(X)$ modulo bounded functions. However the above is more than sufficient for our purpose here. 

The fundamental example is the following. Let $X$ be the $n$-dimensional projective space $\P^n$ and let $\Sigma_k$ be the set of valuations of $k$:
$$\Sigma_k = \Sigma_k^\mathrm{f}\cup \Sigma_k^\infty,$$ where $\Sigma_k^\mathrm{f}$ is the set of non-archimedean (finite) valuations and $\Sigma_
k^\infty$, the set of archimedean (infinite) valuations. We denote the, to $v\in\Sigma_k$ associated normalized absolute value, $|\!|\cdot|\!|_v$.

Let $\boldsymbol{p}=(p_0: p_1: \cdots: p_m)\in\P^m(k)$ and define the \emph{(Weil) height} of $\boldsymbol{p}$ to be
$$H_k(\boldsymbol{p}):=\prod_{v\in\Sigma_k}\max\Big\{ |\!|p_0|\!|_v, |\!|p_1|\!|_v, \dots, |\!|p_m|\!|_v\Big\},$$and the \emph{logarithmic height} as
$$h_k(\boldsymbol{p}):=\log H_k(\boldsymbol{p})=-\sum_{v\in\Sigma_k}n_v\min\Big\{ v(p_0), v(p_1), \dots, v(p_m)\Big\},$$where $n_v= [k_v/\Q_p]$ and $v\mid p$. 

For $k\subseteq K$ a finite extension, \cite[Lemma B.2.1(c)]{HindrySilverman} gives 
\begin{equation}\label{eq:heightsequence}
H_K(\boldsymbol{p})=H_k(\boldsymbol{p})^{[K/k]}.
\end{equation}The following is therefore a natural definition: we define a \emph{height sequence} to be a sequence $\{H_k\mid \Q\subseteq k\}$, parametrized by the field extensions of $\Q$, with the different $H_k$ coherent in the sense that (\ref{eq:heightsequence}) holds if $k\subseteq K$. 

We also define the \emph{absolute height} of $\boldsymbol{p}$ to be
$$H(\boldsymbol{p}):=H_K(\boldsymbol{p})^{\frac{1}{[K/\Q]}}$$ where $K$ is any field such that $\boldsymbol{p}\in\P^m(K)$. This definition is independent on $K$.  Similarly we define the \emph{absolute logarithmic height} to be
$$h(\boldsymbol{p}):=\log H(\boldsymbol{p})=\frac{1}{[K/\Q]}h_K(\boldsymbol{p}).$$

\subsubsection{Height functions on $\cent(A)$}
Now let $\P(\cent(A)):=\P(\Spec(\cent(A)))$ be the projective closure of $\Spec(\cent(A))$:
$$\mathrm{Proj}:\,\, \Spec(\cent(A))\hookrightarrow \P(\cent(A)).$$ As $\P(\cent(A))$ is a projective scheme there is a closed embedding 
$$\alpha:\,\, \P(\cent(A))\hookrightarrow\P^m$$ for some $m$. Put $\varphi:=\alpha\circ\mathrm{Proj}$ and let $\boldsymbol{p}\in\Spec(\cent(A))(K)$ be a $K$-rational point. We then define the \emph{height of $\boldsymbol{p}$ relative to $\varphi$} to be the function
$$H_\varphi(\boldsymbol{p}):= H(\varphi(\boldsymbol{p})),$$where $H$ is the absolute height function on $\P^m$. We also define the \emph{logarithmic height relative to $\varphi$} as
$$h_\varphi(\boldsymbol{p}):=h(\varphi(\boldsymbol{p})).$$

\subsubsection{The (na\"ive) central heights}

Let $\xibf\in\Xscr_A(S)$ be an \'etale $S$-rational point with $\ker\xibf=\Mm_1\Mm_2\cdots\Mm_2$. Put $\mm_i:=\Mm_i\cap\cent(A)$, viewed as points in $\Spec(\cent(A))$.

We now make the following na\"ive definition. 
\begin{dfn}
Let $\xibf\in\Xscr_A(S)$ be an \'etale $S$-rational point.  Then the \emph{central height} of $\xibf$ relative to $\varphi$ is the vector
$$H_{\varphi}^\cent(\xibf):=\Big(H\big(\varphi(\mm_1)\big), H\big(\varphi(\mm_2)\big), \dots, H\big(\varphi(\mm_s)\big)\Big),$$ with its associated logarithmic counterpart 
 $$h_{\varphi}^\cent(\xibf):=\Big(h\big(\varphi(\mm_1)\big), h\big(\varphi(\mm_2)\big), \dots, h\big(\varphi(\mm_s)\big)\Big),$$We denote by $\R^\cent(\Xscr_A)$ the set of all central heights on $\Xscr_A$. This set is clearly parametrized by the embeddings $\alpha: \P(\cent(A))\hookrightarrow \P^m$.
 \end{dfn}  
In other words, if $\Psi: \Xscr_A\to \Spec(\cent(A))$ is the morphism defined by restriction, we have $$H^\cent_\varphi = H_\varphi\circ \Psi=H\circ\varphi\circ\Psi=H\circ\alpha\circ\mathrm{Proj}\circ\Psi,$$where $\alpha$, $\mathrm{Proj}$ and $\varphi$ are defined above. In fact, since the projective closure is canonical, this construction is only dependent on $\alpha$. Hence, we write $H_\alpha$ for $H_\varphi$.

The following is a (slightly) non-commutative variant of Weil's Height Machine (see \cite[Chapter B]{HindrySilverman}).
\begin{thm}\label{thm:Weil_height_machine}Let $\AA$ be a PI-algebra with centre $\cent(\AA)$.

\begin{itemize}
	\item[(a)] We have set-theoretic maps
$$
	\xymatrix@R=5pt{\mathrm{Pic}^\text{va}_\cent(\Xscr_\AA)\ar[r]&\Pic(\gSpec(\cent(\AA)))\ar[r]^<<<<<{\circledast}&\R^\cent(\Xscr_\AA)\\
	\mathcal{L}\ar@{|->}[r]&\det\!\left((\Psi_\ast\mathcal{L})\vert_{\gSpec(\cent(\AA))}\right)\ar@{|->}[r]& H^\cent_{\alpha},}
$$
where $\alpha$ is the embedding $\alpha:\, \P(\gSpec(\cent(\AA)))\hookrightarrow \P^m$, associated with $\mathcal{L}$ via its determinant on $\gSpec(\cent(\AA))$. The map $\circledast$ is the classical Weil Height Machine. 
	\item[(b)] If $\mathcal{L}$ is not very ample, i.e., if $\det\!\left((\Psi_\ast\mathcal{L})\vert_{\gSpec(\cent(\AA))}\right)$ is not very ample, we can find  two very ample $\mathcal{L}_1$ and $\mathcal{L}_2$ on $\P(\gSpec(\cent(\AA)))$ such that
$$\det\!\left((\Psi_\ast\mathcal{L})\vert_{\gSpec(\cent(\AA))}\right)\simeq \mathcal{L}_1\otimes \mathcal{L}_2^{-1}.$$ Therefore we can define $H^\cent_\alpha:=H^\cent_{\alpha_1}-H^\cent_{\alpha_2}$, giving
$$
	\xymatrix@R=5pt{\mathrm{Pic}(\Xscr_\AA)\ar[r]&\mathrm{Pic}(\gSpec(\cent(\AA)))\ar[r]^<<<<<{\circledast}&\R^\cent(\Xscr_\AA)\\
	\mathcal{L}\ar@{|->}[r]&\det\!\left((\phi_\ast\mathcal{L})\vert_{\gSpec(\cent(\AA))}\right)\ar@{|->}[r]& H^\cent_\alpha.}
$$
\end{itemize}

\end{thm}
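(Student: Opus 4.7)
The plan is to reduce both parts to the classical Weil Height Machine applied to the projective closure of the commutative centre $\gSpec(\cent(\AA))$, by transporting data along the finite surjective morphism $\Psi:\Xscr_\AA\to \gSpec(\cent(\AA))$ and the push-forward functor $\Psi_\ast$.

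For part (a), I first need to verify that the first arrow is a well-defined set map. Given $\mathcal{L}\in\Pic^\text{va}_\cent(\Xscr_\AA)$, the algebra $\AA$ is finite over $\cent(\AA)$ by the PI-hypothesis and the morphism $\Psi$ is finite, so $\Psi_\ast\mathcal{L}$ is coherent. Using the discussion in Section \ref{sec:sheaves} (finitely generated modules over an affine PI-algebra over a Jacobson ring are locally free), the push-forward is locally free of finite rank on $\gSpec(\cent(\AA))$ and its determinant makes sense as an element of $\Pic(\gSpec(\cent(\AA)))$. The content of Definition \ref{def:very_ample_sheaf} is precisely that this determinant extends, as a sheaf on the projective closure, to a very ample $\mathcal{N}$ on $\P(\cent(\AA))$; this produces a closed embedding $\alpha: \P(\cent(\AA))\hookrightarrow \P^m$ with $\mathcal{N}\simeq \alpha^\ast\OO(1)$. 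For the second arrow I then invoke the classical Weil Height Machine: the composition $\varphi=\alpha\circ\mathrm{Proj}$ pulls back the absolute height on $\P^m$ to a height on $\Spec(\cent(\AA))$, and composing further with $\Psi$ yields $H^\cent_\alpha\in\R^\cent(\Xscr_\AA)$ as constructed in Section~\ref{sec:cent_height}, i.e. $H^\cent_\alpha = H\circ\alpha\circ\mathrm{Proj}\circ\Psi$.

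For part (b) I invoke the standard commutative-algebraic fact that on a projective Noetherian scheme every invertible sheaf is a difference of very ample sheaves: writing $\mathcal{M}:=\det((\Psi_\ast\mathcal{L})\vert_{\gSpec(\cent(\AA))})$ and choosing any very ample $\mathcal{N}$ on $\P(\cent(\AA))$, the sheaf $\mathcal{M}\otimes \mathcal{N}^{\otimes k}$ becomes very ample for $k\gg 0$, so one obtains very ample $\mathcal{L}_1,\mathcal{L}_2$ on $\P(\gSpec(\cent(\AA)))$ with $\mathcal{M}\simeq \mathcal{L}_1\otimes \mathcal{L}_2^{-1}$. Each $\mathcal{L}_i$ determines an embedding $\alpha_i$ and a central height $H^\cent_{\alpha_i}$ as in (a), and I extend the construction additively by $H^\cent_\alpha := H^\cent_{\alpha_1}-H^\cent_{\alpha_2}$. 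Independence from the decomposition (up to the usual bounded-function ambiguities) is inherited term-by-term from the classical Weil Height Machine.

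The main technical obstacle is verifying compatibility between $\Psi_\ast$, formation of $\det$, and restriction to $\gSpec(\cent(\AA))$; everything else is formal composition with the classical theory. This compatibility is local on the centre, so by Proposition \ref{prop:PItopology} and the identification $\AA(D_f)=\AA_f$ it reduces to an affine verification: on each distinguished open $D_f\subseteq\gSpec(\cent(\AA))$, $\AA_f$ is a finite $\cent(\AA)_f$-algebra, $(\Psi_\ast\mathcal{L})(D_f)$ is a finitely generated projective $\cent(\AA)_f$-module, and taking $\det$ commutes with the further restriction to the underlying commutative scheme. Once this local step is in place, part (a) is an immediate assembly and part (b) follows by the projective-geometry decomposition together with the linearity of the Weil Height Machine.
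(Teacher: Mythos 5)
Your proposal is correct and follows essentially the same route as the paper's proof: both rest on the observation that the first arrow is well-defined by construction (push-forward, determinant, Definition \ref{def:very_ample_sheaf}), invoke the classical Weil Height Machine of Hindry--Silverman for $\circledast$, and in part (b) use the standard twist-by-a-very-ample-sheaf argument to write the determinant as a difference of two very ample sheaves. The only difference is one of exposition — the paper compresses the first-arrow verification into ``follow directly from construction'' and uses $\OO_X(n)$ explicitly in the decomposition, whereas you spell out the coherence, local freeness, and the local compatibility of $\Psi_\ast$, $\det$, and restriction, and use an arbitrary very ample $\mathcal{N}$; the mathematical content is the same.
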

It is important to observe that  $(\mathcal{L}_1, \alpha_1)$ and $(\mathcal{L}_2, \alpha_2)$ are not unique. However, the height functions $H^\cent_\alpha$ coming from different choices of  $(\mathcal{L}_1, \alpha_1)$ and $(\mathcal{L}_2, \alpha_2)$, differ ``only'' up to bounded functions (see for example \cite[Theorems B.3.2 and B.3.6]{HindrySilverman}).  
\begin{proof}Everything, except the map $\circledast$, follow directly from construction. For the construction of $\circledast$, see \cite[Theorems B.3.2 and B.3.6]{HindrySilverman}. That any invertible sheaf on a projective scheme can be written as (a multiplicative) difference of two very ample ones is well-known, but let's spell out an argument nevertheless. The twists $\OO_X(n)$ are very ample for all $n$ and if $\mathcal{L}$ is an invertible sheaf, $\mathcal{L}\otimes \OO_X(n)$ is very ample for $n$ sufficiently large by \cite[Theorem II.5.17 and Exercise II.7.5(d)]{Hartshorne}. Hence $\mathcal{L}\simeq \mathcal{L}\otimes \OO_X(n)\otimes\OO_X(n)^{-1}$
\end{proof}
\begin{remark}It seems interesting to consider \emph{ramification heights}, i.e., heights associated with the ramification locus. 
\end{remark}

\subsubsection{Representation heights}
The above was perhaps the most naive and obvious notion of height possible for PI-algebras. We will now construct a more ``non-commutative version'' that works for all finitely generated algebras. For simplicity, we work with affine algebras. 

Let $\xibf\in\Xscr_A(S)$ be an $S$-rational point. We will write out the construction for non-\'etale points. The \'etale case will be obvious. 

Put $\Mm:=\ker\xibf$, $\mm:=\Mm\cap\cent(A)$ and $K:=\cent(A/\Mm)$. Note that $A/\Mm$ is a finite-dimensional $K$-vector space as it is central simple over its centre (which in turn is a finite extension of $k$). The point $\xibf$ thus defines a representation $\xi:A\to \End_K(A/\Mm)$ (via the projection $A\twoheadrightarrow A/\Mm$). 

Let $\{x_1, x_2, \dots, x_n\}$ be a set of generators for $A$. Since $A/\Mm$ is simple as $K$-algebra, there is, by Wedderburn's theorem, a unique division algebra $D$ with $\cent(D)=K$, such that $A/\Mm\simeq \mathrm{Mat}_m(D)$, where $m$ is also uniquely determined by $A/\Mm$. Hence we can view the $\xi(x_i)$ as matrices with entries in $D$.  These matrices are the \emph{coordinates} of $\xibf$.

Let $v$ be a henselian $\R$-valued valuation on $K$ and $K_v$ the completion of $K$ with respect to $v$. For any finite extension $K_v \subset L$ the $v$ extends uniquely to $L$. Therefore $v$ extends uniquely to $D\otimes_K K_v$ (see \cite[Thm. 1.4]{TignolWadsworth}) and we have a morphism
$$\beta: A/\Mm \xrightarrow{\simeq}\mathrm{Mat}_m(D)\to \mathrm{Mat}_m(D\otimes_KK_v).$$ Denote by $w_D$ the to $D_v:=D\otimes_KK_v$ \emph{uniquely} extended valuation of $v$ and put $M_i:=\beta(\xi(x_i))$. Applying the $D$-valuation $w_D$ to all entries in $M_i$ we get matrices $w_D(M_i)\in\mathrm{Mat}_m(\R)$.

\begin{dfn}
Put $d_i:=\det(w_D(M_i))$. Then we define the \emph{logarithmic height} of $\xibf$ as.
$$h_K^\mathtt{rep}(\xibf):= -\sum_{v\in\Sigma_K}n_v\min\{d_1, d_2, \dots, d_{m_i}\}, \quad n_v:=[D\otimes_K K_v/k].$$The corresponding absolute height is defined as $H_K^\mathtt{rep}(\xibf):=\exp(h_K^\mathtt{rep}(\xibf))$. 
\end{dfn}

\subsubsection{Non-commutative heights}

Let $\xibf\in\Xscr_A(S)$ and decompose the kernel as $\ker\xibf = \prod_i^s\Mm_i$. Put, in addition, $\mm_i:=\Mm_i\cap\cent(A)$. 
$$\mathsf{P}:=\Psi^{-1}(\Psi(\xibf))=\big\{\Psi^{-1}(\mm_i)\,\,\big\vert\,\,1\leq i\leq s\big\}=\big\{\pp_1, \pp_2,\dots,\pp_r\big\}, \quad r\geq s.$$
Notice that this set can include more points than the points in the decomposition of $\ker\xibf$, depending on whether some of the $\mm_i\in\ram(A)$ or not.

The set $\mathsf{P}$ defines a new \'etale point:
$$\boldsymbol{\varrho}=(\varrho_i): A\to \prod_{i=1}^rA/\pp_i$$ with and underlying points $\bar\varrho_i$. 
Put $(\TE_\mathsf{P})_{ij}:=\Ext^1_A(\varrho_i, \varrho_j)$ (cf. section \ref{sec:naive_nc_simp}). 

The augmented tangent space graph $\TG_\mathsf{P}=\TG_\mathsf{P}^1$ then measures the noncommutativity of the point $\xibf$. Put $e_{ij}:=\dim_k((\TE_\mathsf{P})_{ij})$.

The adjacency matrix of $\TG_\mathsf{P}$ is  
$$M_\mathsf{P}:=\begin{pmatrix}
	e_{11} & e_{12} & \cdots & e_{1r}\\
	e_{21}& e_{22} & \cdots & e_{2r}\\
	\vdots & \vdots & \ddots& \vdots\\
	e_{r1}&e_{r2}&\cdots & e_{rr}
\end{pmatrix},$$encoding the $\TG_\mathsf{P}$ in matrix form. Two graphs are isomorphic if their adjacency matrices are conjugate (similar). Hence, up to conjugacy (similarity), the ordering of the points $\xibf_i$ (and thus the $\pp_i$) is irrelevant.  As a consequence, the set $\Lambda$ of eigenvalues is an invariant of $\TG_\mathsf{P}$ and $\TE_\mathsf{P}$.

Recall that $K=\cent(A/\Mm)$. We then make the following definition.
\begin{dfn}The \emph{non-commutative height} of $\xibf$ is 
$$H_K^\mathtt{nc}(\xibf):=\prod_{\sigma: K\hookrightarrow K^\mathrm{al}}\max\Big\{\vert\!\vert\sigma(\lambda_1)\vert\!\vert, \vert\!\vert\sigma(\lambda_2)\vert\!\vert, \dots, \vert\!\vert\sigma(\lambda_t)\vert\!\vert\Big\}.$$The \emph{total height} of $\xibf$ is the vector
$$H_{K, \alpha}^\mathtt{tot}(\xibf):=\Big(H_\alpha^\cent(\xibf), H_K^\mathtt{rep}(\xibf),H_K^\mathtt{nc}(\xibf)\Big)\in\R^2\times\C. $$Observe that this dependent on the embedding $\alpha:\mathbb{P}(\cent(A))\hookrightarrow \mathbb{P}^n$. According to the Height Machine $H_\alpha^\cent(\xibf)$ can be given in terms of a very ample sheaf on the central scheme and a choice of $\alpha$. 
\end{dfn}

 %Then we get a canonical matrix, defined up to permutation of the entries,
%$$M(\TG_\mathsf{P}^1):=\big(e_{ij}\big)\in\mathrm{Mat}_{l\times m}(\Z).$$ 
%\begin{dfn}Put $t:={{l}\choose{r}}$, where $r:=\mathrm{rk}(M(\TG_\mathsf{P}^1))$. The \emph{tangential height} of $\pbf$ is the real number 
%$$H^{\mathtt{tan}}(\pbf):=H_t\Big(\P\big(\mathbf{c}_1\wedge \mathbf{c}_2\wedge \cdots\wedge \mathbf{c}_r\big)\Big),\quad \P\big(\mathbf{c}_1\wedge \mathbf{c}_2\wedge \cdots\wedge \mathbf{c}_r\big)\in\P\Big(\bigwedge^r\R^l\Big),$$where $\{\mathbf{c}_1, \mathbf{c}_2, \dots, \mathbf{c}_r\}$ is a basis for the column space of $M(\TG_\mathsf{P}^1)$, and $H_t$ is the height on $\P\big(\bigwedge^r\R^l\big)$.
%\end{dfn}
%We can now define the non-commutative height.
%\begin{dfn}The \emph{non-commutative height} of $\pbf\in\Xrep(A)(K^\mathrm{alg})$ associated with $\mathcal{L}$ and $\alpha$ is the function 
%\begin{align*}
%H^{\mathtt{nc}}:\quad \Xrep(A)(K^\mathrm{alg}) &\longrightarrow \R^3\\
%\pbf &\longmapsto \big(H^\cent_\alpha(\pbf), H^{\mathtt{rep}}(\pbf), H^{\mathtt{tan}}(\pbf)\big).
%\end{align*}
%The image 
%$$\Omega^{\mathtt{nc}}(\mathcal{L}, \alpha):=H^{\mathtt{nc}}\Big(\Xrep(A)(K^\mathrm{alg})\Big)\subseteq \R^3$$ is the \emph{non-commutative height set} of $\Xrep(A)$ with respect to $\mathcal{L}$ and $\alpha$. 
%\end{dfn}
%It is obvious that the above heights are rather difficult to compute. 

\section{Arithmetic geometry of PI-algebras}\label{sec:arithPI-alg}
\subsection{Compactifying the base}\label{sec:arakelov}

Let $k$ be a number field and $\oo$ an order in $k$ (i.e., $\oo$ need not be integrally closed in $K$; we will later assume this though). Assume that $X$ is of finite type over $\oo$.

\subsubsection{Compactification of orders}
Most of what will follow in this subsection can be found in \cite[Chapter 3]{Neukirch}, although we will frame it in terms of pseudo-divisors. 

Let $\oo$ be an order in a number field $k$ and let $\Sigma$ be the set of infinite primes, i.e., the set of embeddings $k\hookrightarrow \C$. We compactify $\oo$ as 
$$\ooo:=\oo\times \Sigma.$$Put $Y:=\Spec(\ooo)$. Hence $\Sigma = Y(\C)$. We will sometimes use the notation $Y^\mathrm{f}:=\Spec(\oo)$ and $Y^\infty:=\Sigma$.

A finitely generated $\oo$-module $M$ extends to a module over $\ooo$ by extending $M$ to 
$$M_\C=M\otimes_\Z \C = \bigoplus_{\sigma\in\Sigma}M_\sigma=\bigoplus_{\sigma\in\Sigma}M\otimes_{\oo, \sigma}\C, \qquad M_\sigma = M\otimes_{\oo, \sigma}\C,$$ where $M\otimes_{\oo, \sigma}\C$ of course means that we view $\C$ as an $\oo$-module via $\sigma:\oo\hookrightarrow \C$. We put
$$\widehat{M}:=M\times M_\C.$$ 
\begin{dfn}Let $Y=\Spec(\ooo)$. Then an \emph{Arakelov--Cartier divisor} on $Y$ is a pair of pseudo-divisors
$$\big(\widehat\Lscr, \widehat{Z}, \hat{s}\big):=\left(\Lscr^\mathrm{f}, \Lscr^\infty\right):=\Big((\Lscr, Z, s), (\Lscr_\C, Z_\C, s_\C)\Big),$$where $Z_\C\subseteq \Sigma$ and $s_\C$ a function $\Lscr_\C\to \C^\times$, non-vanishing on $\Sigma\setminus Z_\C$. If $\widehat{Z}$ and $\hat{s}$ are given or irrelevant we simply write $\widehat{\Lscr}$ for $\big(\widehat{\Lscr}, \widehat{Z}, \hat{s}\big)$.
\end{dfn}
Notice that $\Lscr_\C$ need not be the base change of $\Lscr$ to $\C$. 

Let $D$ be a Cartier divisor on $Y^\mathrm{f}$. Recall that a Cartier divisor on $Y^\mathrm{f}$ can be identified with the invertible ideals of $\oo$ (i.e., the finitely generated $\oo$-modules $I\subset k$ such that there is another finitely generated  $I^{-1}\subset k$ with $I\otimes I^{-1}=\oo$). 

Then $D$ determines an Arakelov--Cartier divisor $\left(\mathcal{L}^\mathrm{f}, \mathcal{L}^\infty\right)$ with
$$\mathcal{L}^\mathrm{f}= \big(\OO(D), |D|, s\big),\quad\text{and} \quad\mathcal{L}^\infty = \big(\OO(D)_\C, \Sigma, 1_\Sigma\big).$$To spell it out explicitly, let $$D=\Big\{(U_i, f_i)\mid 1\leq i\leq m, \,\, f_i\in k\Big\}$$ be a Cartier divisor on $Y$. The support of $D$ is 
$$|D|=\Big\{\pp\in\Spec(\oo)\mid f_i\notin \oo_\pp^\times \text{ for some $i$}\Big\}.$$Then 
$$\OO(D)=\oo\cdot f_1^{-1}+\oo\cdot f_2^{-1}+\cdots+\oo\cdot f_m^{-1}, \qquad s=\prod_{i=1}^m f_i$$(we write $s$ multiplicatively) and 
$$\OO(D)_\C=\OO(D)\otimes_\Z \C=\prod_{\sigma\in\Sigma} \OO(D)\otimes_{\oo, \sigma}\C, \quad\text{with}\quad 1_\Sigma(\sigma)=1\in\C^\times, \,\, \sigma\in\Sigma.$$Obviously, by $\Sigma$ we mean all embeddings of $k\hookrightarrow \C$ as before.

\subsubsection{Compactification of algebras over arithmetic schemes}
We compactify $X_{/\oo}$ to a scheme $\widehat{X}_{/\ooo}$ by adding the complex points of $X$:
$$\widehat{X}:=\widehat{X}_{/\ooo}:=X_{/\oo}\times X^\infty, \quad\text{where} \quad X^\infty:=\prod_{\sigma\in\Sigma} X\times_{\oo, \sigma}\Spec(\C).$$We call $X:=X_{/\oo}$ the \emph{finite} part (we include the generic fibre in the finite part) and $X^\infty$ the \emph{analytic} (or \emph{infinite}) part of $\widehat{X}_{/\ooo}$.

Let $\AA$ be a  finitely generated algebra over $X$, with structure morphism $f:\OO_X\to\AA$. We extend this to an analytic algebra as follows. Fix an embedding $\sigma: \oo\hookrightarrow \C$. Put
$$\OO_X\otimes_{\oo, \sigma}\C\xrightarrow{f\otimes_\sigma\C} \sigma^\ast\AA:=\AA\otimes_{\oo, \sigma}\C$$and
$$\prod_{\sigma\in\Sigma}  \Big(\OO_X\otimes_{\oo, \sigma}\C\xrightarrow{f_\sigma}\sigma^\ast\AA\Big), \quad f_\sigma:= f\otimes_\sigma\C.$$We will normally work with one $\sigma$ at a time for simplicity of notation. Finally we put
$$\widehat{\AA}:=\AA\times\prod_{\sigma\in\Sigma}\sigma^\ast\AA.$$ We call $\widehat{\AA}$ the \emph{compactification} of $\AA$ over $\widehat{X}$. 

Let $\MM$ be a left $\AA$-module, finite over $\OO_X$. We extend $\MM$ to an $\widehat{\AA}$-module by 
\begin{align*}
\widehat{\MM}:=\MM\times\prod_{\sigma\in\Sigma}\sigma^\ast\MM,\quad \text{with}\quad\sigma^\ast\MM:=\MM\otimes_{\oo, \sigma}\C
\end{align*}
Hence, $\sigma^\ast\MM$ are sheaves over $X^\infty$ with an action by $\AA$ and where $\oo$ acts via $\sigma$.

The category of all $\widehat{\AA}$-modules is denoted $\Mod(\widehat{\AA})$ and is the product category
$$\Mod(\widehat{\AA}):=\Mod(\AA)\times\prod_{\sigma\in\Sigma}\Mod(\sigma^\ast\AA).$$This meaning of this notation is hopefully clear.  

\begin{dfn}We define $\widehat{\Xscr}_\AA$ as
\begin{align*}\widehat{\Xscr}_\AA&:=(\Xscr_\AA^\mathrm{f},\Xscr_\AA^\infty):=\Mod(\AA)\times\prod_{\sigma\in\Sigma}\Mod(\sigma^\ast\AA)\\
&:=\underline{\Mod}(\widehat{\AA})=\big(\Mod(\widehat{\AA}), T_\mathrm{ZJ}, \widehat{\boldsymbol{\OO}}\big).
\end{align*}Clearly, $\widehat{\boldsymbol{\OO}}$ decomposes as 
$$\widehat{\boldsymbol{\OO}}:=\boldsymbol{\OO}^\mathrm{f}\times\boldsymbol{\OO}^\infty:=\boldsymbol{\OO}\times\prod_{\sigma\in\Sigma}\sigma^\ast\boldsymbol{\OO}$$ and the same applies to the topology $T_\mathrm{ZJ}$.
\end{dfn}Do not confuse $\widehat{\boldsymbol{\OO}}$ (the object defined above) and $\hat{\boldsymbol{\OO}}$ (the formal object coming from deformation theory). The notation here is not optimal but I think it will not cause too much headache for the reader. 
 
Observe that there are properties of $\AA$ that can be shown to hold over the analytic part, that does not necessarily hold over the finite part due to the fact that the base field is algebraically closed. 

\subsection{Divisors}

For the sake of simplicity we express the next definition in terms of affine algebras. Hence $X=\Spec(B)$ and $\AA = A$ with $B$ a finitely generated $\oo$-algebra with generators $\{x_1, x_2, \dots, x_s\}$, where $\oo$ is an order in a number field $k$. We also assume that $A$ is a prime ring. This implies that $\cent(A)$ is a domain. 

Since $A$ is prime, Posner's theorem implies that there is a central simple algebra $\Frac(A)$ in which $A$ is a maximal order. In fact, 
$$\Frac(A)=A\otimes_{\cent(A)}\cent(A)_{(0)}=A\otimes_{\cent(A)}k(\cent(A)),$$where $\cent(A)_{(0)}$ denotes localisation at the generic point, and $k(\cent(A))$ the field of quotients (and these two are the same). 

Choose generators $\{\eb_1, \eb_2, \dots, \eb_r\}$ of $A$ over $\cent(A)$, where $r=\mathrm{rk}_\cent(A)$. Let $\Lscr_\cent$ be an invertible subsheaf of $k(\cent(A))$ and choose $s$ families of global sections $$\Big\{\beta_{i;k}\,\,\big\vert\,\, 1\leq i\leq s\Big\}_{k=1}^r$$of $\Lscr_\cent$. Choose a covering $\{D_j\}$ of $\cent(A)$.

We introduce the following $\cent(A)$-action, using $\Lscr_\cent$, on $A$:
$$(x_i\cdot \eb_k)(D_j):=\beta_{i; k}(D_j)\eb_k,\quad 1\leq i\leq s,\,\,\, 1\leq k\leq r.$$ Define an element 
$$\alphabf(D_j):=\sum_{i=1}^r \alpha_i(D_j)\eb_i, $$ with the $\alpha_i(D_j)$ sections of $\Lscr_\cent$ over $D_j$. Put $\Lscr:=A\cdot\alphabf\cdot A$. This defines an invertible $A$-module over $\cent(A)$ (in the sense of section \ref{sec:sheaves}) and
$$\Lscr(D_j)=(A\cdot\alphabf\cdot A)(D_j)=\bigoplus_{i=1}^r	A(D_j)\cdot \alpha_i(D_j)\eb_i\cdot A(D_j),$$ where $$A(D_j):=A\otimes_{\cent(A)}\cent(A)_{g_j}\quad\text{and}\quad D_j=\Spec(\cent(A)_{g_j}).$$ We say that $\Lscr\in\Pic_{\cent(A)}(A)$ as defined above is a \emph{line sheaf} (resurrecting S. Lang's terminology) over $A$.

The \emph{support}, $|\Lscr|$, of $\Lscr$ is 
$$|\Lscr| := \Xscr_{A/\mathcal{J}}, $$with $\mathcal{J}$ the two-sided ideal sheaf $$\mathcal{J}(D_j):=\Big\{A(D_j)\cdot\beta_{i;k}^{-1}(D_j)\cdot A(D_j)\,\,\big\vert\,\, 1\leq i\leq s,\,\,\, 1\leq k\leq r\Big\}.$$

Observe that the above constructions are essentially obvious but it is helpful to spell them out nevertheless. 

\begin{dfn}Let the data above be given.
\begin{itemize}
	\item[(i)] A \emph{Cartier divisor} on $\Xscr_A$ is a pair $(\Lscr, s_\Lscr)$, where $\Lscr$ is a line sheaf on $\Xscr_A$ and $s_\Lscr$ a global section of $\Lscr$, as constructed above.
	\item[(ii)]  An \emph{Arakelov--Cartier} divisor on $\widehat{\Xscr}_A$ is a pair $\widehat{\Lscr}:=\big(\Lscr^{\mathrm{f}}, \Lscr^\infty\big)$, where $\Lscr^\mathrm{f}$, is a Cartier divisor on the finite part $\Xscr_A^{\mathrm{f}}$ and $\Lscr^\infty$ a Cartier divisor on the analytic part $\Xscr_A^\infty$. 
	\item[(iii)] The divisor $(\widehat{\Lscr}, s_{\widehat{\Lscr}})$, where $s_{\widehat{\Lscr}}:=(s_{\Lscr^\mathrm{f}}, s_{\Lscr^\infty})$,  is \emph{effective} if all $\beta_{i;k}^{-1}\in \cent(A)$.
\end{itemize}
\end{dfn}

Let $X\xrightarrow{f} \Spec(\oo)$ be an \emph{arithmetic surface} (i.e., $X$ has relative dimension one over $\oo$), then a \emph{vertical (or fibral) divisor} $\dd$ is a divisor included in a fibre $X\otimes_{\oo}k(\qq)$, for $\qq\in\Spec(\oo)$. Equivalently, $\dd$ is vertical if $f(\dd)=\{\qq\}$. In addition, a divisor $\dd\subset X$ such that $f(\dd)=\Spec(\oo)$ is called a \emph{horizontal divisor}. Equivalently, $\dd$ is horizontal if it is the Zariski closure of a closed point on the generic fibre of $X$.  

In terms of affine algebras a prime divisor is a codimension-one prime $\pp\subset B$, i.e., a prime such that $\dim(B/\pp)=1$. Hence, $\pp$ is fibral if $\pp=f^a(\qq)$, for some $\qq\in\Spec(\oo)$; $\pp$ is horizontal if $\pp=f^a(\oo)$. Here $f^a$ is the to $f$ associated algebraic map.

Recall that $\dim(A)$ denotes the classical Krull dimension, i.e., the supremum of all chains of 2-sided prime ideals in $A$.  Hence, saying that a prime $\pp$ has codimension $n$ means that $\dim(A/\pp)=n$. 
 
 Let $\pp\subset A$ be a 2-sided codimension-one prime in $A$, and put $\pp_\cent:=\pp\cap\cent(A)$ with residue class field $k(\pp):=k(\pp_\cent)$. Let $\varrho$ be the representation $\varrho: A\to \End_{k(\pp)}(A/\pp)$. The versal family of $\varrho$ is
$$\hat\varrho: A\longrightarrow \End_{k(\pp)}(A/\pp)\otimes_{k(\pp)} \hat H_{A/\pp},$$ where $\hat H_{A/\pp}$ is the pro-representing hull of $\varrho$. Recall that this is a local (non-commutative) ring. Let $\mm_{\hat H}$ be the maximal ideal. We define an order function associated with $\pp$ as
$$\mathrm{ord}_\pp(f):=\min\Big\{n\in\Z_{\geq 0}\,\,\big\vert\,\, \hat\rho\vert_{\hat H}(f)\in\mm_{\hat H}^n\Big\}, \quad f\in A,$$extended to $f/g\in\Frac(A)$ as usual by
$$\mathrm{ord}_\pp(f/g):=\mathrm{ord}_\pp(f)-\mathrm{ord}_\pp(g).$$
\begin{remark}When $A$ is commutative, the above construction reduces to the classical commutative situation. For instance, when $A$ is commutative, we have an isomorphism 
$$\hat \OO_{\Spec(A), \pp}\simeq \hat H_{A/\pp}$$ and so $\mathrm{ord}_\pp$-function reduces to the commutative order function.
\end{remark}
\begin{dfn}Let $X=\Spec(A)\to \Spec(\oo)$ be an affine scheme over $\oo$ and let $\Xscr_A^{(1)}$ denote the set of 2-sided codimension-one  primes in $A$. 
\begin{itemize}
	\item[(a)] A \emph{Weil divisor} on $\Xscr_A$ is a formal sum 
	$$\dd=\sum_{\pd\in\Xscr_A^{(1)}} n_\pd\cdot \pd, \quad n_\pd\in\Z,$$ where all but finitely many $n_\pd$ are zero. The divisor is \emph{effective} if all $n_\pd\geq 0$. The primes $\pd\in \dd$ such that $n_\pd\neq 0$ are the \emph{prime divisors} of $\dd$. Extending the definition to the analytic part in the obvious way, we can speak of \emph{Arakelov--Weil divisors}. 
	\item [(b)] If $\Spec(\cent(A))$ is an arithmetic surface, a Weil divisor $\dd$ on $\Xscr_A$ is \emph{vertical} (\emph{horizontal}) if the intersection $\dd\cap\cent(A)$ is a vertical (horizontal) Weil divisor on $\cent(A)$.
	\item [(c)]   Let $(\Lscr, s_\Lscr)$ be a Cartier divisor on $\Xscr_A$. Then the associated \emph{Weil divisor} is the formal sum
$$\mathrm{Weil}(\Lscr):=\sum_{\pd\in \Xscr_A^{(1)}}\mathrm{ord}_\pd(s_\Lscr)\pd,$$ where  denotes the set of (2-sided) codimension-one primes of $A$. 
\end{itemize}
When viewing $\pd$ as a prime ideal we write it as $\pp$, and vice versa. 
\end{dfn}

Let $\xibf: A\to S$ be an \'etale $S$-rational point such that 
$$\ker\xibf=\prod_{i=1}^n\pp_i^{n_i}, \quad n_i\geq 1,$$where at least one of the $\pp_i$ have codimension one. The underlying points are $\bar\xi_i=A/\pp_i$. Notice that, unless $\pp_i$ is maximal, $\bar\xi_i$ is an open point. Then $\xibf$ defines a Weil divisor 
$$\dd_{\xibf}=\sum_{\pp_i\in\ker\xibf}n_i\pd_i,$$ where $\pd_i$ is the divisor corresponding to $\pp_i$. Conversely, given a Weil divisor 
$$\dd=\sum_{\pd\in \Xscr_A^{(1)}}n_\pd\pd,$$ we can define an $S$-rational point:
$$\xibf_\dd: A\longrightarrow S, \quad\text{with}\quad S=\prod_{i=1}^sA/\pp_i^{n_i},$$with $\pp_i$ once again corresponding to $\pp_i$. Hence, Weil divisors are essentially \'etale rational points where the underlying points are of codimension one. Observe that the points can be non-reduced. 

We can define linear equivalence by restricting to the centre. Put
$$\dd:=\sum_{\pd\in\Xscr_A^{(1)}} d_\pd\cdot \pd, \quad\text{and}\quad \mathtt{E}:=\sum_{\pd\in\Xscr_A^{(1)}} e_\pd\cdot \pd, \qquad d_\pd, e_\pd \in\Z.$$
The restriction to the centre gives
$$\dd_\cent:=\sum_{\pd\in\Xscr_A^{(1)}} d_\pd\cdot (\pd\cap\cent(A))$$ and similarly with $\mathtt{E}$. If $\pp$ has codimension one so does $\pp\cap\cent(A)$ (follows from the going up theorem \cite[Theorem 8.14(ii)]{McConnellRobson}), so the above defines a Weil divisor on $\cent(A)$. We say that $\dd$ and $\mathtt{E}$ is \emph{linearly equivalent}, writing $\dd\sim \mathtt{E}$, if $\dd_\cent\sim \mathtt{E}_\cent$. 

The free abelian group of all divisors on $\Xscr_A$, modulo those linearly equivalent to the zero divisor, is the (first) \emph{Chow group}, $\mathrm{CH}^1(\Xscr_A)$. This extends naturally to the analytic part, allowing us to define $\mathrm{CH}^1(\overline{\Xscr}_A)$ in the obvious way. 

Using the linear equivalence of divisors, we can define the same notion for Cartier divisors. Let $(\Lscr, s_\Lscr)$ and $(\mathcal{K}, s_\mathcal{K})$ be two Cartier divisors on $\Xscr_A$. We then define $(\Lscr, s_\Lscr)$ and $(\mathcal{K}, s_\mathcal{K})$ to be linearly equivalent if the associated Weil divisors are. In this way we can define a group of Cartier divisors $\mathrm{Cart}(\Xscr_A)$, using the structure on $\mathrm{CH}^1(\Xscr_A)$. As above this extends to the analytic part and we can introduce $\mathrm{Cart}(\overline{\Xscr}_A)$.
\subsection{Intersection products}
In this section we make a rudimentary attempt at defining an intersection theory on $\Xscr_A$. A more sophisticated method involving, among other things, the infinite part (i.e., true Arakelov theory) should possibly be discussed at a later stage. 

We will write intersection products on $\Xscr_A$ as $\odot$. 

Let $\dd$ and $\ee$ be prime Weil divisors. If $\dd_\cent, \ee_\cent\subset \azu(A)$ we define the intersection product of $\dd$ and $\ee$ in $\Xscr_A$ as the \'etale rational point
$$\dd\odot\ee:=\Big(\xibf: A\longrightarrow\frac{A}{A(\dd_\cent\cap \ee_\cent) A}\Big). $$ 

 The intersection number is defined as
$$\mathsf{i}(\dd,\ee):=\sum_{\mathfrak{s}\in \dd_\cent\cap \ee_\cent}\mathrm{length}\left(\frac{A\otimes_{\cent(A)}\OO_{\cent(A), \mathfrak{s}}}{A(\dd_\cent\cap \ee_\cent)A}\right),$$ where $$\mathsf{i}_\mathfrak{s}(\dd, \ee):=\mathrm{length}\left(\frac{A\otimes_{\cent(A)}\OO_{\cent(A), \mathfrak{s}}}{A(\dd_\cent\cap \ee_\cent)A}\right)$$ is the local intersection number at $\mathfrak{s}.$ This is well-defined since $A\otimes_{\cent(A)}\OO_{\cent(A), \mathfrak{s}}$ is an Azumaya algebra of finite rank over $\OO_{\cent(A), \mathfrak{s}}$ and the set $\dd_\cent\cap \ee_\cent$ is finite. 

Let's look at the ramification locus and make the following definition.
\begin{dfn}\label{def:priminters2}
Suppose $\dd$ and $\ee$ are as above but such that $\dd_\cent\cap\ee_\cent\subset\ram(A)$.  Assume first that the intersection is one point $\pp$  and that
$$\fam{M}:=\Psi^{-1}(\pp):=\{\mm_1, \mm_2, \dots, \mm_s\} \qquad (\text{as ideals}).$$Then $\fam{M}$ defines an \'etale $S$-rational point 
$$\xibf: A\to S, \quad\text{where} \quad S=\prod_{i=1}^sA/\mm_i.$$We now define 
$$\dd\odot\ee:=\xibf$$ 
and
$$\mathsf{i}_\pp(\dd,\ee):= \dim_{k(\pp)}\!\big(\hat\OO_{\fam{M}}\big/\hat\rho(\ker\xibf)\big),$$ where 
$$\hat\rho: A\to\hat\OO_{\fam{M}}$$ is the versal family of $\fam{M}$. Since $\hat\OO_\fam{M}$ is semi-local the algebra $\hat\OO_{\fam{M}}\big/\hat\rho(\ker\xibf)$ is finite-dimensional over $k(\pp)$, this definition is well-defined. 

The total intersection number is defined in the obvious way:
$$\mathsf{i}(\dd,\ee):=\sum_{\mathfrak{s}\in \dd_\cent\cap \ee_\cent}\mathsf{i}_\pp(\dd,\ee).$$ The definition extends naturally to the case in which $\dd_\cent\cap\ee_\cent\subset\ram(A)$ is more than one point. 
\end{dfn}
Note that we assume here that the intersection happens on a finite fibre, so that the ring $\hat\OO$ is defined. The same definition extends to the generic and analytic fibre in the natural manner.

\begin{remark}Clearly, since the above definitions involve deformation theory, intersections are quite difficult to compute as defined above. However, I feel that the above is the ``correct'' one in the present context. Also it should be said that there are other, more general and abstract, versions of intersection theory on non-commutative spaces (for instance \cite{Jorgensen} in the case of non-commutative surfaces). However, the definition of non-commutative spaces in those versions are global, whereas the approach taken in this paper is fundamentally local. It seems to me that the global approach is not particularly suited for applications i arithmetic. 
\end{remark}

\section{Examples of non-commutative arithmetic spaces}\label{sec:example}

\subsection{Non-commutative quotient spaces}\label{sec:noncomquotient}
Let $X$ be a quasi-projective scheme and $G$ a \emph{finite} group acting on $X$. Since $X$ is quasi-projective and $G$ finite, the quotient $X/G$ exists as a quasi-projective scheme. 

The group $G$ acts on the structure sheaf $\OO_X$ such that $G\cdot \OO_X(U)\subset \OO_X(U)$ and so we can look at the $\OO_X$-algebra $\AA:=\OO_X\langle G\rangle$. This is the finite $\OO_X$-algebra defined as
$$\AA:= \bigoplus_{\tau\in G} \OO_X\cdot\tau, \quad \tau y=\tau(y)\tau, \quad \text{for all $y\in\OO_X$}.$$ The centre of $\AA$ is 
$$\cent(\AA)=\OO_X^G$$ and $\AA$ is finite as a module over $\cent(\AA)$ and hence a PI-algebra over $X/G$. Observe that it is not a PI-algebra over $X$ since $\OO_X^G\subset \OO_X$. 

It is a general fact that the simple $\AA$-modules are in one-to-one correspondence with the set of orbits of $X$ under $G$, in other words, the closed points on $X/G$.  Hence, in this way $\Xscr_\AA$ can be viewed as ``non-commutative thickening'' of $X/G$. 

Also, even if $X$ is not quasi-projective, the quotient $X/G$ exists as a \emph{Deligne--Mumford stack}. If $G$ is not finite the quotient exists as an \emph{Artin} (or \emph{algebraic}) \emph{stack}. Normally, stack quotients are denoted $[X/G]$.  

In view of this, we denote the non-commutative space associated with $\AA$ as
$$\big[\![X/G]\!\big]:=\Xscr_\AA$$ and call this the \emph{non-commutative quotient} of $X$ modulo $G$ and $X/G$ the \emph{coarse space}. Observe that this is commutative. 

\begin{remark}
Let $A$ be a noetherian prime ring which is finite over its centre. Then $A$ is \emph{homologically homogeneous} (\emph{hom-hom}, for short) if $A$ has finite global dimension and, for every pair $\mathfrak{M}_1, \mathfrak{M}_2\in \Max(A)$ such that $\mathfrak{M}_1\cap\cent(A)=\mathfrak{M}_2\cap\cent(A)$, the simple modules $A/\mathfrak{M}_1$ and $A/\mathfrak{M}_2$ have the same projective dimension. Observe that this is a natural extension of regularity in the commutative sense. It is known (see \cite[Thm. 5.6]{StaffordZhang}) that hom-hom implies Auslander-regularity (which we won't define) and in the graded case, Artin--Schelter regularity (which we won't define either).  If $A$ includes a field it is also Cohen--Macaulay. 

Now, a \emph{non-commutative crepant resolution} of a commutative ring $R$ is any ring $\Delta$ such that $\Delta\simeq \End_{R}(M)$ where $M$ is a reflexive $R$-module. Let $V$ be a finite rank free $\oo$-module with a linear action of $G$. Then 
$$R\langle G\rangle \simeq \End_{R^G}(\mathrm{Sym}(V))$$ is a non-commutative crepant resolution of $R^G$. Therefore, $\big[\![X/G]\!\big]$ can be viewed as a non-commutative desingularisation of $R^G$. 
\end{remark}

We will now look a couple of quotient spaces and an example with an order over an arithmetic surface. 
 
\subsection{The plane $\Z/3$-quotient singularity}\label{sec:z3-quotient}
Recall that a point on a non-commutative space $\Xscr_A$ is a representation $\rho: A\to\End(M)$. The point is a closed \'etale point if $\ker\rho$ can be decomposed as $\ker\rho=\mm_1\mm_2\cdots\mm_s$ such that all $A/\mm_i$ are simple algebras (i.e., that the $\mm_i$ are maximal). If $s=1$ we simply say closed point. The algebras $A/\mm_i$ are the underlying points of $\rho$. 

Let $\zeta$ be a primitive third root of unity and assume that $\Z[\zeta]\subseteq\oo$. We let $\mu_3=\langle \sigma\rangle$ act on $\ooo[x, y]$ as
$$\sigma: \quad x\mapsto \zeta x, \quad y\mapsto \zeta^2 y.$$Put 
$$\widehat{A}:=\ooo[x,y]\langle \mu_3\rangle = \frac{\ooo[x,y]\langle \sigma\rangle}{(\sigma x-\zeta x\sigma, \,\,\sigma y - \zeta^2 y\sigma, \,\,\sigma^3=1)}.$$The centre $\cent(\widehat{A})$ is 
$$\widehat{A}^{\mu_3}=\ooo[x^3, xy, y^3]=\ooo[r, s, t]/(t^3-rs), \quad r:=x^3, \,\, s:=y^3, \,\, t:=xy,$$and has a singularity at the origin across all fibres. 

Let $\rhobf:\widehat{A}\to \End_{\ooo}(M)$ be a point on $\widehat{\Xscr}_A$. The point restricts to a point, $\rhobf_{k(\pp)}$, on each fibre for every $\pp\in\Spec(\widehat{A})$. Note, however, that $\widehat{A}_{k(\pp)}$ degenerates to $k(\pp)[x, y, \sigma]$ if there are no non-trivial third roots of unity in $k(\pp)$.

For simplicity of notation, let's fix a prime $\pp\in\Spec(\ooo)$ such that $k:=k(\pp)$ includes a non-trivial third root of unity. Accordingly, we write $A$ instead $\widehat{A}$. Note that $\pp$ need not be a finite prime. 

Now, let $\pf:=(x-a, y-b)$ be a $k$-point on $\Spec(k[x, y])$. The orbit of $\pf$ under $\mu_3$ is the $k$-scheme
$$\Spec\left(\frac{k[x,y]}{(x-a, y-b)}\times\frac{k[x,y]}{(x-\zeta a, y-\zeta^2 b)}\times\frac{k[x,y]}{(x-\zeta^2 a, y-\zeta b)}\right).$$
This corresponds to the $A$-module
$$\rho: A\to\End_k(M_{(a, b)}), \quad M_{(a,b)}:=k\eb_1\oplus k\eb_2\oplus k\eb_3,$$
with actions 
$$\rho(x)=\begin{pmatrix}
 a & 0 & 0\\
 0 & \zeta^{-1} a & 0\\
 0 & 0 & \zeta^{-2} a
\end{pmatrix}, \quad \rho(y)=\begin{pmatrix}
 b & 0 & 0\\
 0 & \zeta^{-2} b & 0\\
 0 & 0 & \zeta^{-1} b
\end{pmatrix}, \quad \rho(\sigma) = \begin{pmatrix}
0 & 0 & 1\\
1 & 0 & 0\\
0 & 1 & 0
\end{pmatrix}$$The kernel of $\rho$ is generated by $\mm:=(x^3-a, y^3-b, \sigma^3-1)$. If $ab\neq 0$ the algebra $A/\mm$ is a central simple algebra and so $\rho$ defines a closed point. On the other hand, if $a=0$ (or $b=0$) the $A/\mm$ is not simple and so $\rho$ is an open point. The underlying closed points are $(x^3-a, y, \sigma^3-1)$ and $(x, y^3-b, \sigma^3-1)$. 

Similary, we take $N_{(u,v)}:=k\eb_1'\oplus k\eb_2'\oplus k\eb_3'$ with
$$\rho'(x)=\begin{pmatrix}
 u & 0 & 0\\
 0 & \zeta^{-1} u & 0\\
 0 & 0 & \zeta^2 u
\end{pmatrix}, \quad \rho'(y)=\begin{pmatrix}
 v & 0 & 0\\
 0 & \zeta^{-2} v & 0\\
 0 & 0 & \zeta^{-1} v
\end{pmatrix}$$with $\rho'(\sigma)=\rho(\sigma)$.

Put
$$\delta(x):=\begin{pmatrix}
	x_{11} & x_{12} & x_{13}\\
	x_{21} & x_{22} & x_{23}\\
	x_{31} & x_{32} & x_{33}
\end{pmatrix}\qquad \delta(y):=\begin{pmatrix}
	y_{11} & y_{12} & y_{13}\\
	y_{21} & y_{22} & y_{23}\\
	y_{31} & y_{32} & y_{33}
\end{pmatrix}$$
and 
$$\delta(\sigma):=\begin{pmatrix}
	s_{11} & s_{12} & s_{13}\\
	s_{21} & s_{22} & s_{23}\\
	s_{31} & s_{32} & s_{33}
\end{pmatrix}$$From the relation $\delta(\sigma^3-1)=0$ follows
\begin{align}
\begin{split}
	s_{11} &= -s_{22}-s_{33}\\
	s_{12} &= -s_{23}-s_{31}\\
	s_{13} &= -s_{21}-s_{32}
\end{split}
\end{align}

Since $x$ and $y$ commutes we find that $\delta(xy-yx)=0$ leads to (after some simplifications)

\begin{align}\label{eq:xy-system}
\begin{split}
(b  - v) x_{11} &=(a - u) y_{11}\\
 (\zeta b - v) x_{12} &= (\zeta^{-1} a - u) y_{12}\\
 (\zeta^{-1}b - v) x_{13} &=(\zeta a - u) y_{13} \\
(b  - \zeta v) x_{21} &= (a - \zeta^{-1} u) y_{21} \\
 \zeta^2 (b - v) x_{22} &=  (a  - u )y_{22}\\
 ( b  - \zeta^2 v) x_{23} &= (\zeta^2 a -u) y_{23} \\
(b - \zeta^{-1} v) x_{31} &= (a - \zeta u) y_{31}\\
  (\zeta^2 b  -v) x_{32} &= ( a - \zeta^2 u) y_{32}\\
   (b - v)x_{33} &= \zeta^2( a- u) y_{33}
\end{split}
\end{align}

Similarly, $\delta(\sigma x-\zeta x\sigma)=0$ gives, (again after simplifications)

\begin{align*}
x_{11}&=\zeta x_{22} -(a-u)s_{21}\\
x_{31}&=\zeta x_{12}-(a-\zeta u)(s_{22}+s_{33})\\
x_{32}&=\zeta x_{13}-\zeta(\zeta a-u)(s_{23}+s_{31})\\
x_{33}&=\zeta x_{22}-(a-u)(s_{21}+s_{32})\\
x_{21}&=\zeta^2 x_{13}-(a-\zeta^2 u)s_{23}\\
x_{23}&=\zeta^2 x_{12}-\zeta(a-\zeta u)s_{22}
\end{align*}
and, by symmetry, $\delta(\sigma y-\zeta^2 y \sigma)=0$, 
\begin{align}\label{eq:y-system}
\begin{split}
y_{11}&=\zeta y_{22} -(b-v)s_{21}\\
y_{31}&=\zeta y_{12}-(b-\zeta v)(s_{22}+s_{33})\\
y_{32}&=\zeta y_{13}-\zeta(\zeta b - v)(s_{23}+s_{31})\\
y_{33}&=\zeta y_{22}-(b-v)(s_{21}+s_{32})\\
y_{21}&=\zeta^2 y_{13}-(b-\zeta^2 v)s_{23}\\
y_{23}&=\zeta^2 y_{12}-\zeta(b-\zeta v)s_{22}.
\end{split}
\end{align}

We find that we can chose $x_{12}$, $x_{13}$ and $x_{22}$ as parameters from $\delta(x)$ and $s_{21}$, $s_{22}$, $s_{23}$, $s_{31}$, $s_{32}$ and  $s_{33}$ from $\delta(\sigma)$.  For a generic point $(u,v)=(a,b)$ (which is $(a^3, b^3, ab)$ in $\Spec(\cent(A))$), we can additionally choose $x_{11}$ and $y_{11}$ as parameters. 

Since $\sigma$ only scrambles the entries in a matrix upon multiplication (from the left and right), we easily see that the dimension of the inner derivations is 9. Therefore, for a generic point $(u, v)=(a,b)$ the dimension is two as it should be.

However, for specific choices of points, the ext-dimensions are higher. These are the open points defined above. In fact, the images of the coordinate axes from $\mathbb{A}^2$ to $\Spec(\cent(A))$ is $\ram(A)$:
$$\ram(A)=\Spec(k[r, s, t]/(t^3-rs, s, t))\cup\Spec(k[r, s, t]/(t^3-rs, r, t)).$$ 

Indeed, put $b=v=0$ and, say, $u=\zeta a$. Then $u^3=a^3$ so both $(a, 0)$ and $(\zeta a, 0)$ lie over the same point in $\cent(A)$. We see that the left-hand side of (\ref{eq:xy-system}) is zero. Also, in row 3,  we find $\zeta(a-a)y_{13}=0$. This implies that $y_{13}$ is a free parameter. Similarly, it looks like $y_{21}$ and $y_{32}$ would also become free, but from (\ref{eq:y-system}) both of these are expressible in terms of $y_{13}$.  Hence we gain one free parameter and so 
$$\Ext^1_A\big(M_{(a,0)}, N_{(\zeta a, 0)}\big) = k.$$We easily see that there are 1-dimensional Ext's between all three points above $(a^3,0)$. By symmetry we find that the same holds for the ``$y$-axis'' $(0, b^3)$. 

It is quite easy to convince oneself that all deformations are unobstructed. 

Put $M_i:=M_{(\zeta^{i-1}a, 0)}$, $i=1, 2, 3$. For $\pf_\cent\in\ram(A)$, we can view the fibre $\phi^{-1}(\pf_\cent)=\{M_1, M_2, M_3\}$ as an \'etale point. Indeed, let $\pf_\cent$ correspond to the maximal ideal $\mm$. Then the extension of $\mm_\cent$ to $A$ splits into three maximal ideals $\mm_1, \mm_2,\mm_3$, corresponding to $M, N, P$, and so $\rho: A\to \End_k(A/\mm_\cent)$ is an \'etale point with $\ker\rho=\mm_1\mm_2\mm_3$. The underlying points are then the simple algebras $A/\mm_i$. 

We summarise the discussion with the following theorem.

\begin{thm}The non-commutative space $\big[\![\mathbb{A}^2_\oo/\mu_3]\!\big]$ is 

$$\big[\![\mathbb{A}^2_\oo/\mu_3]\!\big]=(\Mod(A), \OO),$$where, for $\pf\in\azu(A)$, viewed as both a module and point,
$$\hat \OO_\pf=\hat\OO_{\mathbb{A}^2_\oo/\mu_3, \pf}\simeq \End_k(\pf)\otimes_k \hat H_\pf.$$Over $\ram(A)$, we have
the \'etale point $\fam{M}:=\{M_1, M_2, M_3\}$ and so
$$\hat H_{\fam{M}}=\begin{pmatrix}
	k\langle\!\langle t^1_{11}, t^2_{11}\rangle\!\rangle, & \langle t_{12}\rangle  & \langle t_{13}\rangle \\
	\langle t_{21}\rangle & k\langle\!\langle t^1_{22}, t^2_{22}\rangle\!\rangle & \langle t_{23}\rangle \\
	\langle t_{31} \rangle &  \langle t_{33}\rangle  & k\langle\!\langle t^1_{33}, t^2_{33}\rangle\!\rangle
\end{pmatrix},$$ and, with hopefully clear notation,
$$
\hat \OO_{\fam{M}}= \Hom_k(\fam{M})\otimes_k \hat H_\fam{M}.
$$
The closed points of $\big[\![\mathbb{A}^2_\oo/\mu_3]\!\big]$ are stratified as 
$$\big[\![\mathbb{A}^2_\oo/\mu_3]\!\big]_n = (\Mod^\bullet_n, \OO_n), \quad \text{where $n=1, 3$},$$and
$$\big[\![\mathbb{A}^2_\oo/\mu_3]\!\big]_3=\azu(A), \quad\text{and}\quad\big[\![\mathbb{A}^2_\oo/\mu_3]\!\big]_1=\ram(A).$$
\end{thm}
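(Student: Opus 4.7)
The plan is to partition the closed points of $\cent(A)$ as $\azu(A)\sqcup\ram(A)$ and to apply, on each piece, the structure theorem for pro-representing hulls from Section~\ref{sec:deformation} together with Proposition~\ref{prop:ExtAcent}. As a preliminary step I would compute the fibre $A\otimes_{\cent(A)}k(\mm)$ for each closed $\mm\in\Spec(\cent(A))=\Spec(\oo[r,s,t]/(t^3-rs))$: away from the coordinate axes $V(s,t)$ and $V(r,t)$ the action of $\mu_3$ on the relevant localisation of $\oo[x,y]$ is free, so the standard skew-group-ring argument yields a matrix algebra and places $\mm$ in $\azu(A)$, while on the axes the vanishing coordinate generates a non-trivial nilpotent ideal, so the fibre is not central simple. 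This confirms $\ram(A)=V(s,t)\cup V(r,t)$.

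For $\pf\in\azu(A)$ the corresponding simple module has rank three and Proposition~\ref{prop:ExtAcent} gives
$$\Ext^1_A(\pf,\pf)\simeq\Ext^1_{\cent(A)}(k(\mm),k(\mm)), \qquad \mm=\pf\cap\cent(A).$$
The only singular point of $\cent(A)$ is the origin, which lies on both axes and hence in $\ram(A)$, so every Azumaya point is smooth on $\cent(A)$; hence this Ext-space has $k$-dimension two and its deformations are unobstructed. Applying the structure theorem for versal families then yields $\hat\OO_\pf\simeq \End_k(\pf)\otimes_k \hat H_\pf$ with $\hat H_\pf$ a two-variable non-commutative power-series ring, exactly as claimed.

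The main work is the ramification-locus computation. For a smooth $\pf_\cent\in\ram(A)$ I would take the fibre $\fam{M}=\{M_1,M_2,M_3\}$ used in the discussion preceding the theorem and verify the Ext-data explicitly by solving, on generic matrices $\delta(x),\delta(y),\delta(\sigma)$, the linear systems produced by the relations
$$\delta(\sigma^3-1)=\delta(xy-yx)=\delta(\sigma x-\zeta x\sigma)=\delta(\sigma y-\zeta^2 y\sigma)=0,$$
and then quotienting by the $9$-dimensional space of inner derivations; this should deliver $\dim_k\Ext^1_A(M_i,M_i)=2$ and $\dim_k\Ext^1_A(M_i,M_j)=1$ for $i\ne j$. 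The chief technical obstacle is the bookkeeping: precisely which parameters from $\delta(x),\delta(y)$ remain free after the constraints from $\delta(\sigma)$ are absorbed changes as $\pf_\cent$ moves from a generic central point onto an axis, and this collapse must be tracked component by component. Unobstructedness then follows because the defining relations of $A$ are all commutator-type, so the quadratic terms in the matric Massey-product algorithm vanish; combined with the general hull theorem this yields the stated matrix form of $\hat H_{\fam{M}}$ together with $\hat\OO_{\fam{M}}=\Hom_k(\fam{M})\otimes_k\hat H_{\fam{M}}$. Finally, the stratification of closed points follows from the orbit-simple-module dictionary for skew group rings: generic free orbits of size three produce rank-$3$ simples lying over $\azu(A)$, while orbits collapsed by a non-trivial stabiliser produce rank-$1$ simples whose central supports cover $\ram(A)$, giving $\big[\![\mathbb{A}^2_\oo/\mu_3]\!\big]_3=\azu(A)$ and $\big[\![\mathbb{A}^2_\oo/\mu_3]\!\big]_1=\ram(A)$.
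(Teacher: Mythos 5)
Your plan follows the paper's own route very closely: partition $\Spec(\cent(A))$ into $\azu(A)$ and $\ram(A)$, settle the Azumaya stratum with Proposition~\ref{prop:ExtAcent} together with smoothness of the centre away from the origin, carry out the explicit derivation computation with the $3\times 3$ matrices $\delta(x),\delta(y),\delta(\sigma)$ on the ramification stratum, and read off the hull from the structure theorem of Section~\ref{sec:deformation}. That is the correct skeleton, and the Azumaya half is fine as written.

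There is, however, an internal tension in your identification of $\ram(A)$ that you should resolve rather than leave in place. You offer two criteria that do not say the same thing. First you appeal to freeness of the $\mu_3$-action: but the action $(x,y)\mapsto(\zeta x,\zeta^2 y)$ is free on all of $\mathbb{A}^2\setminus\{0\}$, including the punctured coordinate axes, so freeness would place only the image of the origin in the complement of $\azu(A)$, not the whole of $V(s,t)\cup V(r,t)$. In the stratification paragraph you again invoke ``orbits collapsed by a non-trivial stabiliser,'' and again the only such orbit is the one over the origin. In between you assert that on the axes the vanishing coordinate produces a non-trivial nilpotent ideal in the fibre — but that is not what happens: at $\mm=(r-a^3,s,t)$ with $a\neq 0$, the central relations $x^3=a^3$ (making $x$ a unit) and $xy=0$ together force $y=0$ in $A\otimes_{\cent(A)}k(\mm)$, so there is no nilpotent ideal and the fibre is the cyclic algebra $k\langle x,\sigma\rangle/(x^3-a^3,\,\sigma x-\zeta x\sigma,\,\sigma^3-1)$, which is split $\mathrm{Mat}_3(k)$ since $k$ contains $\zeta_3$. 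So a direct fibre check agrees with your stabiliser and freeness criteria and disagrees with your nilpotent-ideal assertion. You are reproducing the paper's statement that $\ram(A)$ is the two axes, but your own two justifications point to a one-point ramification locus, and the one justification you give that points at the axes is the one that fails on inspection.

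Separately, the reason you give for unobstructedness — that the defining relations are ``commutator-type,'' so the quadratic Massey products vanish — is not a valid general principle. The algebra $\Z[\sqrt{d}]\langle\Z/2\rangle$ later in the same paper is presented by relations of precisely this shape ($\tau x+x\tau$, $\tau^2-1$), yet its deformations \emph{are} obstructed at the wildly ramified prime $p=2$. The cup products are not formally zero for commutator presentations; they depend on the residue characteristic and the actual $\Ext^2$-classes. In the $\mu_3$ example the obstructions do vanish, but the operative reason is that a fibre with $\zeta_3\in k$ has been fixed, so $3$ is invertible and the ramification is tame. If unobstructedness is to be a proved step rather than an asserted one, you need either to exhibit $\Ext^2_A(M_i,M_j)=0$ on that fibre, or to compute the relevant cup products and see them vanish; the paper itself only asserts this, so you should be clear that a genuine computation is still owed here.
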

It is important to observe that the statements made in the theorem are made fibre-by-fibre (anything else is meaningless since everything is trivial across different characteristics). We have chosen not to make this explicit with an awkward notation such as $\big[\![\mathbb{A}^2_\oo/\mu_3]\!\big]\otimes k(\pp)$ or something similar.
\begin{figure}
\centering
\includegraphics[scale=.7]{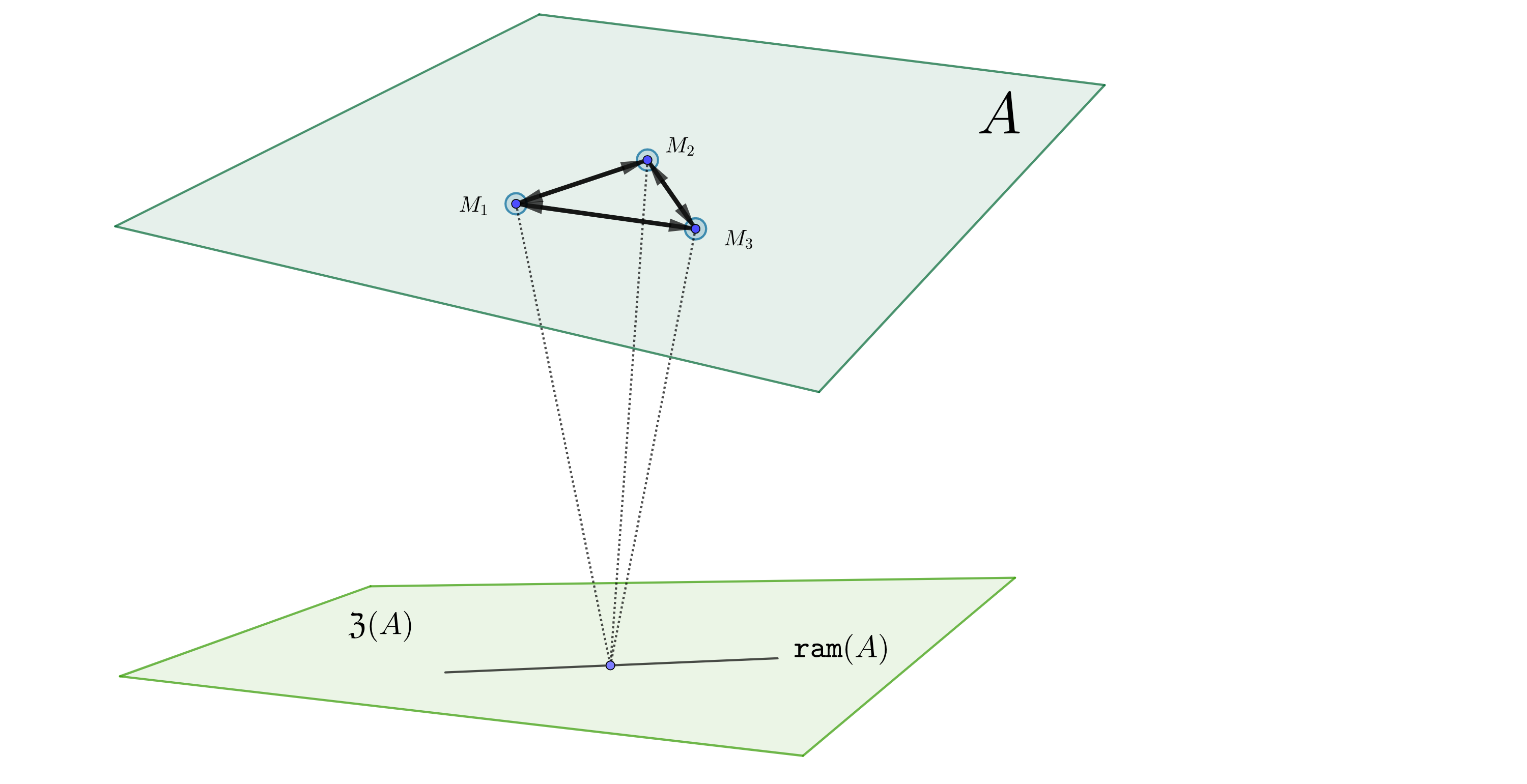}
\caption{Ramification situation for $\big[\![\mathbb{A}_\oo^2/\mu_3]\!\big]$ on a fibre.}
\end{figure}

\subsubsection{Arithmetic geometry of $\big[\![\mathbb{A}_\oo^2/\mu_3]\!\big]$}

Recall that an \'etale rational point on $\Xscr_A$ is an algebra morphism $\xibf: A\to S$ such that 
$$A/\ker\xibf=\prod_i^s A/\mm_i,$$ is a direct product of prime algebras. If $S$ is artinian this implies that the $\mm_i$ are all maximal so the $\bar\xi_i:=A/\mm_i$ are then simple algebras. These are the underlying points of $\xibf$. 

Let $\dd_\cent$, $\ee_\cent$ and $\ff_\cent$ be the central divisors
 $$\dd_\cent:=\{r=a^3, s=t=0\}, \quad\ee_\cent=\{r=t=a^3, s=1\},$$and $$\ff_\cent:=\{r=1, s= t=b^3\}.$$ Observe that $\dd_\cent\subset\ram(A)$ but $\ee_\cent, \ff_\cent\subset\azu(A)$. 
 
The intersection between $\ee_\cent$ and $\ff_3$ is $\ee_\cent\cap\ff_\cent=\{r=s=t=1\}$ corresponding to the ideal $$\mm:=(r-1, s-1, t-1)\subset\cent(A).$$This gives the rational point
$$\ee\odot\ff=\Big(\xi:A\to A/\mm=\frac{A}{(x^3-1, y^3-1)}\Big)$$which is a cental simple algebra (as it should since the intersection is in $\azu(A)$). 

The intersection $\dd_\cent\cap\ff_\cent$ is inside $\ram(A)$ and corresponds to the ideal
$$\mathfrak{n}:=(r-1, s, t)=(x^3-1, y^3, xy).$$This defines the \'etale rational point
$$\dd\odot\ff=\Big(\xibf: A\longrightarrow\prod_{j=0}^2 \frac{A}{A(x-\zeta^j, y)A}\Big).$$Observe that 
$$\sigma(x-\zeta^j)\zeta^i x\sigma\in A(x-\zeta^j, y)A$$ for all $i$. This implies (taking $i=-j$) that $\sigma x-x\sigma=0$ in $A/A(x-\zeta^j, y)A$, implying that $A/A(x-\zeta^j, y)A$ is actually commutative. In fact, as is easily seen, $A/A(x-\zeta^j, y)A=k$. Therefore,
$$\dd\odot\ff=\Big(\xibf: A\longrightarrow k\times k\times k\Big).$$

We leave for the reader to compute the intersection numbers (which is not quite so easy).

The module $M_{(\alpha,\beta)}$, with $\alpha, \beta \in k'$, defines a $k'$-rational point on $\big[\![\mathbb{A}_\oo^2/\mu_3]\!\big]$  via the structure morphism $$\xi:A\to\End_{k'}(M_{(\alpha,\beta)}).$$  

In addition, let $$\xi_\cent: k[r,s,t]/(t^3-rs)\longrightarrow k'$$ be a $k'$-rational point on $\Spec(\cent(A))$. Then 
$$\xibf: A\longrightarrow A/\ker\rho_\cent$$ is an \'etale $(A/\ker\rho_\cent)$-rational point on $\big[\![\mathbb{A}_\oo^2/\mu_3]\!\big]$. Note that $A/\ker\rho_\cent$ is a $k'$-algebra. 

\begin{example}
Let $\xi_\cent$ be the point corresponding to the ideal $$\ker\xi_\cent:=(r-\zeta, s-1, t-\tau), \quad \tau^3=\zeta.$$Then $k'=k(\sqrt[3]{\zeta})$. Note that $\ker\xi_\cent\in \azu(A)(k')$ and the lift of $\xi_\cent$ to $A$ is the rational point
$$\xi: A\to A/\ker\xi_\cent=\frac{A}{\left( x^3-\zeta, y^3-1\right)}.$$This is a $k'$-central simple algebra.

Suppose now that $\xi_\cent$ is the point corresponding to the ideal 
$$\ker\xi_\cent:=(r-\zeta, s, t)\in \ram(A)(k').$$ Then the lift of $\xi_\cent$ to $A$ is 
$$\xibf: A\to A/\ker\xi_\cent=\frac{A}{\left( x^3-\zeta, y\right)}.$$This defines an \'etale rational point with underlying points $\bar\xi_i$, the points corresponding to the ideals over $\ker\xi_\cent$ in $A$. Note that the residue ring of $\xibf$ is the \'etale algebra $E_{\xibf}=k'\times k'\times k'$.
\end{example}

Let us finally compute the non-commutative height of a point in $\ram(A)$. The adjacency matrix is 
$$E=\begin{pmatrix}
2 & 1 & 1\\
1 & 2 & 1\\
1 & 1 & 2
\end{pmatrix}$$whose eigenvalues are $\{4, 1, 1\}$. Therefore $H_{k'}^\mathtt{nc}(\xibf)=4$. 

Observe that $H_{k'}^\mathtt{nc}(\xibf)$ is only dependent on the local data $\hat H_\fam{M}$. The other coordinates in the height vector 
$$H_{k', \alpha}^\mathtt{tot}(\xibf)=\Big(H_\alpha^\cent(\xibf), H_{k'}^\mathtt{rep}(\xibf),H_{k'}^\mathtt{nc}(\xibf)\Big)$$are the entities directly related to the coordinates of the point $\xibf$. Unfortunately I don't know how to compute this even in the simplest (non-trivial) case. 
\subsection{The non-commutative thickening $\Spec(\Z)^{\mathrm{nc}}$}

Let $d\equiv 2,3\,\,\text{mod}\,\, 4$ and square-free. Put $F:=\Q[\sqrt{d}]$ and $\oo_F$ its ring of integers. The stated hypothesis on $d$ ensures that $\oo_F=\Z[\sqrt{d}]$, with discriminant $\delta_F=4d$. We use the presentation $\oo_F=\Z[x]/(x^2-d)$. 

This gives a $\Z/2$-cover $\phi:\Spec(\oo_F)\to \Spec(\Z)$, ramified over $2$ and the prime divisors of $d$. Since $|\Z/2|=2$, we see that the cover is wildly ramified over $2$ and tamely ramified for all other ramification points. 

Put $\mathfrak{H}:=\Z/2$. The orbits of $\mathfrak{H}$ on $\oo_F$ come in three types: 
\begin{itemize}
	\item[(i)] The number of points in the orbit is two. This is the (completely) split case. 
	\item[(ii)] The number of points in the orbit is one, without multiplicity. This is the inert case.
	\item[(iii)] The number of points in the orbit is one, with multiplicity. This is then the ramified case. 
\end{itemize} Observe that ``point'' means \emph{closed} point and that $\Spec(\oo_F)\big/\mathfrak{H}=\Spec(\Z)$.  

Accordingly, the corresponding $\oo_F\langle\Gamma\rangle$-modules look very different. Let $\tau$ denote the non-trivial element of $\mathfrak{H}$, thus acting as $\tau(a+b\sqrt{d})=a-b\sqrt{d}$. We look at the different cases in turn. Let $\pp\in\Spec(\oo_F)$ be a prime over $p\in\Spec(\Z)$ and put 
$$A:=\oo_F\langle\mathfrak{H}\rangle\simeq \frac{\Z\langle x,\tau\rangle}{\left(x^2-d,\tau x+x\tau,\tau^2-1\right)}.$$

\subsubsection{The split case}
Assume that $p$ is split. Hence $(p)=\pp_+\pp_-$ and the orbit of $\pp_+$ (or $\pp_-$, of course) is $\boldsymbol{orb}(p):=\{\pp_+,\pp_-\}$. Since the orbits are precisely the fibres of the covering morphism $\phi$, the orbit is completely determined by the underlying prime $p$ and we parametrize the orbits using the quotient $\Spec(\Z)$. 

The $A$-module corresponding to $\boldsymbol{orb}(p)$ is
$$M=\frac{\F_p[x]}{(x-a)}\eb_1\oplus\frac{\F_p[x]}{(x+a)}\eb_2,$$where $x^2-d=(x-a)(x+a)$ modulo $p$. In other words,
$$M=\F_p\eb_1\oplus\F_p\eb_2,\quad\text{with}\quad x\mapsto \begin{pmatrix}
	a & 0\\
	0 & -a
\end{pmatrix},\quad \tau\mapsto\begin{pmatrix}
	0 & 1\\
	1 & 0
\end{pmatrix}.$$This defines a simple $A$-module. 

Any $\delta\in\Der_{\Z}\!\!\left(A,\End_{\F_p}(M)\right)$ must satisfy
$$\delta(x^2-d)=\delta(x^2)=0,\quad \delta(\tau x+x\tau)=0 \quad\text{and}\quad \delta(\tau^2-1)=\delta(\tau^2)=0.$$The first relation leads to 
\begin{align*}
	\delta(x^2)=
		\begin{pmatrix}
	x_{11} & x_{12}\\
	x_{21} & x_{22}
	\end{pmatrix}\begin{pmatrix}
			a & 0\\
	0 & -a
	\end{pmatrix}+\begin{pmatrix}
			a & 0\\
	0 & -a
	\end{pmatrix}\begin{pmatrix}
	x_{11} & x_{12}\\
	x_{21} & x_{22}
	\end{pmatrix}=\begin{pmatrix}
		2ax_{11} & 0\\
		0 & -2ax_{22}
	\end{pmatrix}
\end{align*}implying that $x_{11}=x_{22}=0$ (since $p\neq 2$). The second equation gives
$$\delta(\tau x+x\tau)=\begin{pmatrix}
	2ad_{11}+x_{21}+x_{12} & 0\\
	0 & -2ad_{22}+x_{21}+x_{12}
\end{pmatrix}=\mathbf{0},$$where we have used that $x_{11}=x_{22}=0$. Similarly, 
$$\delta(\tau^2)=\begin{pmatrix}
	d_{12}+d_{21} & d_{11}+d_{22}\\
	d_{11}+d_{22} & d_{12}+d_{21}
\end{pmatrix}=\mathbf{0},$$implying that $d_{22}=-d_{11}$ and $d_{21}=-d_{12}$. We see that $d_{11}$ can be expressed in terms of $x_{12}$ and $x_{21}$ so $d_{11}$ and $d_{22}$ are determined when $x_{12}$ and $x_{21}$ are fixed. Therefore, we can choose $d_{12}$, $x_{12}$ and $x_{21}$ as free parameters. A small computation shows that all derivations are inner, i.e., $\dim_{\F_p}(\mathrm{Ad})=3$, and so 
$\Ext^1_A(M,M)=0$ in the completely split case. Observe that this is relative to $\Z$. Therefore, $\hat H=k$ and so
$$\hat\OO_{\{M\}}=\hat H\otimes_k\End_k(M)=\End_k(M). $$
This means that $M$, as an $A$-module, is rigid in the deformation-theoretic sense which is certainly reasonable (one cannot ``deform'' prime numbers). This should certainly apply to the inert case also. Let's show explicitly that this is true.

\subsubsection{The inert case}
When $p$ is inert, we have $(p)=\pp\in\Spec(\oo_F)$. This means that $x^2-d$ is irreducible modulo $p$. However, even though there is only one point in the orbit, the corresponding module is 2-dimensional. The orbit is the fibre of $\phi$ so the corresponding module is
$$M=\oo_F\otimes_{\Z}k(p)=\oo_F/(p)=\frac{\Z[x]/(x^2-d)}{(p)}=\F_p[x]/(x^2-d)=\F_p\eb_1\oplus\F_p\eb_2.$$The action of $x$ is given as
$$x\cdot\eb_1=\eb_2,\quad\text{and}\quad x\cdot\eb_2=d\eb_1.$$The action of $\tau$ is slightly trickier. Observe first that it cannot be the identity since $p$ is not ramified. However, $M$ is a quadratic extension of $\F_p$ and $\tau$ reduces to the non-trivial extension of this extension so must be given by $\tau(\eb_1)=\eb_1$ and $\tau(\eb_2)=-\eb_2$ modulo $p$. On matrix form we thus have
$$x\mapsto \begin{pmatrix}
	0 & d\\
	1 & 0
\end{pmatrix},\quad\text{and}\quad \tau\mapsto \begin{pmatrix}
	1 & 0\\
	0 & -1
\end{pmatrix}.$$ This is a simple $A$-module. Notice that the matrix corresponding to $x$ does not have any eigenvectors over $\F_p$ (since $x^2-d$ is irreducible) so $x$ cannot preserve any 1-dimensional subspace of $M$. 

The same type of calculation as in the previous case shows that $M$ is rigid here also, i.e., $\Ext^1_{A}(M,M)=0$ (implying that $\hat H=k$), hence
$$\hat\OO_{\{M\}}=\End_k(M)$$in the inert case also.
\subsection{The ramified case}
Suppose now that $p\mid d$. Then 
$$M_3:=\oo_F/(p)=\frac{\Z[x]/(x^2-d)}{(p)}=\F_p[x]/(x^2)=\F_p\eb_1\oplus\F_p\eb_2,$$ with $x\cdot\eb_1=\eb_2$ and $x\cdot\eb_2=0$. The induced action of $\tau$ on $M_3$ is $\tau(\eb_1)=\eb_1$, $\tau(\eb_2)=\eb_2$. This is an indecomposable module, but not simple. 

The composition series $\F_p\eb_2\subset \F_p\eb_1\oplus\F_p\eb_2$ gives the two simple modules
$$M_1:=\F_p\eb_2, \quad M_2:= (\F_p\eb_1\oplus\F_p\eb_2)/\F_p\eb_2\simeq \F_p\eb$$where $x\eb=0$. Observe that $M_1\simeq M_2$. This should be interpreted as $M_3$ including two isomorphic, but distinct, points (i.e., modules). 

Let $\delta: A\to \Hom_{\F_p}(M_1, M_2)=\End_{\F_p}(M_1)$ be a derivation. We find
$$\delta(x^2)\eb=\delta(x)x\eb+x\delta(x)\eb=0, \qquad \delta(\tau^2)\eb=\delta(\tau)\tau\eb+\tau\delta(\tau)\eb=2d_\tau\eb,$$the first one following since $x\eb=0$. Take $\theta\in\End_{\F_p}(M_1)$. Then,
$$(\theta x-x\theta)\eb=0,\quad\text{and}\quad (\theta\tau-\tau\theta)\eb=0,$$ so $\dim(\mathrm{Ad})=0$. Consequently, 
$$\Ext^1_A(M_1, M_2)=\Ext_A^1(M_1, M_1)=\begin{cases}
k, & \text{if $p\neq 2$, and}\\
k^2, & \text{if $p= 2$.}\end{cases}$$

Now, let $\delta: A\to \Hom_{\F_p}(M_2, M_3)$. Put $\delta(x)\eb = d_1\eb_1+d_2\eb_2$ and $\delta(\tau)\eb = t_1\eb_1+t_2\eb_2$. Then we find
$$\delta(x^2)\eb=d_1\eb_2, \qquad \delta(\tau^2)\eb=2t_1\eb_1+2t_2\eb_2.$$

We can thus choose $d_2$ and $t_2$ as free parameters if $p\neq 2$ and, additionally, $t_2$ as free when $p= 2$. Computing the inner derivations we find that these are 1-dimensional so 
$$\Ext^1_A(M_2, M_3)=\begin{cases}
k, & \text{if $p\neq 2$, and}\\
k^2, & \text{if $p= 2$.}\end{cases}$$

In the other direction, we can compute 
$$\Ext^1_A(M_3, M_2)=\begin{cases}
0, & \text{if $p\neq 2$, and}\\
k^2, & \text{if $p= 2$.}\end{cases}$$Finally, we compute $\Ext^1_A(M_3, M_3)$. 

We have
$$\rho(x)=\begin{pmatrix}
	0 & 0\\
	1 & 0
\end{pmatrix}, \quad \text{and}\quad 
\rho(\tau)=\begin{pmatrix}
	1 & 0 \\ 
	0 & 1
\end{pmatrix}.$$ For $\delta\in\Der_{\F_p}(A, \End_{\F_p}(M_3))$, put 
$$\delta(x)=\begin{pmatrix}
	x_{11} & x_{12}\\
	x_{21} & x_{22}
\end{pmatrix}, \quad\text{and}\quad \delta(\tau)=\begin{pmatrix}
	s_{11} & s_{12}\\
	s_{21} & s_{22}
\end{pmatrix}.$$From $\delta(x^2)=0$, we find that $x_{12}=0$, $x_{22}=-x_{11}$ and $x_{21}$ free; from $\delta(\tau^2)=0$ we get 
$$2s_{11}=2s_{12}=2s_{21}=2s_{22}=0.$$Assume first, that $p\neq 2$. 

Then $s_{11}=s_{12}=s_{21}=s_{22}=0$ and a general derivation can be written as
$$\delta = \begin{pmatrix}
	d_{11} & 0\\
	d_{21} & -d_{11}
\end{pmatrix}.$$Therefore, an inner derivation comes from an element $\theta\in\End_{\F_p}(M_3)$ on the form
$$\theta = \begin{pmatrix}
	\theta_{11} & 0\\
	\theta_{21} & -\theta_{11}
\end{pmatrix}.$$Computing $\theta x-x\theta$ we find that $2\theta_{11}=0$, and so $\dim(\mathrm{Ad})=1$. Consequently, $\Ext_A^1(M_3, M_3)=\F_p$. 

On the other hand, in the wildly ramified prime $p=2$, we find $\dim(\mathrm{Ad})=0$, implying that $\Ext_A^1(M_3, M_3)=\F_2^2$.

Since all fibres of $\Z\to A$ are central simple, $A$ is Azumaya over $\cent(A)=\Z$.

\subsubsection{The space $\big[\![\Z[\sqrt{d}]\big/\Gamma]\!\big]$}
\begin{thm}
The space 
$$\big[\![\Z[\sqrt{d}]\big/\Gamma]\!\big]=\Big(\Mod\big(\Z[\sqrt{d}]\big/\Gamma\big), \boldsymbol{\OO}\Big)$$is an Azumaya thickening of $\Z$. If $M_{/\F_p}$ is an unramified point
$$\hat\OO_{\{M\}}=\End_{\F_p}(M).$$ In the tamely ramified case we have, with $\fam{M}:=\{M_1, M_2, M_3\}$,
$$\hat H_{\fam{M}}=\begin{pmatrix}
	\F_p[[t_{11}]] & \langle t_{12}\rangle & \langle t_{13}\rangle\\
	\langle t_{21}\rangle & \F_p[[t_{22}]] & \langle t_{23}\rangle \\
	0 & 0 & \F_p[[t_{33}]]
\end{pmatrix}$$In the wildly ramified case ($p= 2$), we have
$$\hat H_{\fam{M}}=\begin{pmatrix}
	\frac{\F_2\langle\!\langle u_1,u_2\rangle\!\rangle}{\left(u_1^2, \,u_2^2\right)} & \langle t_{12}^1, t_{12}^2\rangle & \langle t_{13}^1, t_{13}^2\rangle\\
	\langle t_{21}^1, t_{21}^2\rangle & \frac{\F_2\langle\!\langle v_1,v_2\rangle\!\rangle}{\left(v_1^2, \,v_2^2\right)} & \langle t_{23}^1, t_{23}^2\rangle \\
	0 & 0 &  \frac{\F_2\langle\!\langle w_1,w_2\rangle\!\rangle}{\left(w_1^2, \,w_2^2\right)}
\end{pmatrix}$$
In both cases the versal family is
$$\hat\OO_\fam{M}=\Hom_{\F_p}(\fam{M})\otimes_{\F_p}\hat H_\fam{M}.$$
\end{thm}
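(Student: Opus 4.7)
The theorem is a consolidation of the case-by-case tangent space computations performed in the preceding three subsections, enriched with a second-order (obstruction) analysis. First I would organise the proof by the type of prime $p \in \Spec(\Z)$: split, inert, tamely ramified, or wildly ramified (\textit{i.e.}, $p=2$). For each type I would recall which simple $A$-modules make up the fibre $\phi^{-1}(p)$ and use the $\Ext^1$-computations already carried out to identify the tangent space graph $\TG_{\fam{M}}^1$. Azumaya-ness (in the sense that $A \otimes_\Z \F_p$ is a direct product of matrix algebras over the appropriate extension of $\F_p$ of the right multiplicity) has been observed directly from the explicit matrix actions of $x$ and $\tau$ on each fibre, so this gives the overall ``Azumaya thickening'' statement.

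For the unramified cases (split and inert) the preceding computations give $\Ext^1_A(M,M) = 0$. Hence the deformation functor $\Def_{\{M\}}$ is trivial: $\hat H = \F_p$, and consequently $\hat\OO_{\{M\}} = \End_{\F_p}(M) \otimes_{\F_p} \F_p = \End_{\F_p}(M)$, proving the first formula. Since $M$ is in this case the unique simple module over $A/pA$, this also confirms that $A \otimes_\Z \F_p \simeq \End_{\F_p}(M)$ is a central simple $\F_p$-algebra at every unramified prime.

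For the tamely ramified primes ($p \mid d$, $p$ odd) I have already established $\Ext^1_A(M_i,M_j) = \F_p$ for the pairs appearing in the matrix $\hat H_{\fam{M}}$ and $\Ext^1_A(M_3, M_j) = 0$ for $j=1,2$, which explains the zeros in the bottom row. To lift this tangent structure to the hull, I would compute the second layer $\Ext^2_A(M_i, M_j)$ by the same cocycle method used for $\Ext^1$, and check that these obstruction spaces vanish at each tame prime (the computation is essentially parallel to the $\Ext^1$ one but with the additional relation $\delta^2 = 0$, which eliminates the non-inner pieces). By the Massey-product algorithm of \cite{EriksenLaudalSiqveland} recalled in section~\ref{sec:deformation}, the diagonal entries then become unobstructed formal power series rings $\F_p[[t_{ii}]]$ in one generator, while the off-diagonal entries are free bimodules of rank one, giving exactly the matrix displayed.

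For the wildly ramified prime $p = 2$ the $\Ext^1$-spaces double to $\F_2^2$, which accounts for the two generators $u_i, v_i, w_i$ on the diagonal. The heart of the argument is to show that the matric Massey products produce precisely the relations $u_i^2 = v_i^2 = w_i^2 = 0$ (and the analogous bimodule structures off-diagonal). My plan is to lift the cocycles representing a basis of $\Ext^1$ to explicit $2$-cochains via the $A$-bimodule resolution of each $M_i$, and compute the cup squares; since the relevant $\Ext^2$-spaces are precisely large enough to detect quadratic relations and no higher, the algorithm terminates at second order. This step, verifying that the obstruction is exactly quadratic (no higher-order corrections, no cross terms forcing extra relations), is the main technical obstacle and requires the most care. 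With the hulls in hand, the identification $\hat\OO_{\fam{M}} = \Hom_{\F_p}(\fam{M}) \otimes_{\F_p} \hat H_{\fam{M}}$ then follows from the general formula of section~\ref{sec:deformation}.
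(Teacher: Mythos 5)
Your overall strategy mirrors the paper's: organise by prime type, invoke the $\Ext^1$-computations from the three preceding subsections, and then lift the tangent data to a hull via matric Massey products. The paper's own proof of this theorem is a single sentence -- ``The only thing not proven in the discussion above are the obstructions in the wild ramification points. We omit this computation'' -- so your plan to actually carry out the $\Ext^2$- and cup-square computations for the tame and wild ramified cases fills in content the paper leaves entirely implicit, and that part of your proposal is sound in spirit (though your description of the tame $\Ext^2$-calculation ``with the additional relation $\delta^2 = 0$'' is not how Hochschild 2-cocycles are described; you would use a bimodule resolution or the bar complex, not squared derivations).

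There is, however, a genuine gap in your justification of the ``Azumaya thickening'' statement. You write that $A \otimes_\Z \F_p$ is ``a direct product of matrix algebras over the appropriate extension of $\F_p$'' and that this ``has been observed directly from the explicit matrix actions of $x$ and $\tau$ on each fibre.'' That is true at the split and inert primes, but it fails at every ramified prime. When $p \mid d$ one has $x^2 \equiv 0 \pmod{p}$, so in $A/pA = \F_p\langle x,\tau\rangle/(x^2,\, \tau x + x\tau,\, \tau^2-1)$ the two-sided ideal $(x)$ is a nonzero square-zero ideal; hence $A/pA$ has nontrivial Jacobson radical and is not semisimple, let alone a product of matrix algebras. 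This is even forced by the paper's own computations that you cite: the nonvanishing $\Ext^1_A(M_1,M_2)$ at ramified primes would be impossible over the Azumaya locus by M\"uller's theorem (section~\ref{sec:RationalPI}). The paper itself asserts, just before the theorem, that ``all fibres of $\Z\to A$ are central simple, $A$ is Azumaya over $\cent(A)=\Z$,'' which is simply inconsistent with what has been computed; a correct proof would have to replace the Azumaya assertion with one restricted to the unramified locus (or define ``Azumaya thickening'' so as not to require every fibre to be semisimple). You should not have accepted this claim at face value, since the contradiction is visible from the very $\Ext^1$-data you use two sentences later.
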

\begin{proof}
The only thing not proven in the discussion above are the obstructions in the wild ramification points. We omit this computation. 
\end{proof}
\subsection{Orders over a curve}

Let $Y:=\Spec(R)$, with $R:=\ooo[u,v]/(f(u,v))$, be an arithmetic surface over $\ooo$ such that $\zeta=\zeta_3\in\oo$ is a (primitive) third root of unity. Then 
$$A:=\frac{R\langle x,y\rangle }{(xy-\zeta yx, x^3 - u, y^3-v)}=\frac{\ooo[u,v]\langle x,y\rangle }{\left(f(u,v), xy-\zeta yx, x^3 - u, y^3-v\right)}$$ is an algebra over $Y$ with central scheme $Y=\Spec(\cent(A))$ itself. 

Let $M'$ be the $A$-module $M':=k\eb$ with actions
$$u\eb = \alpha \eb, \quad v\eb = \beta \eb, \quad x\eb = a\eb, \quad y\eb = b\eb, \quad \alpha, \beta, a, b\in k.$$Note that $f(\alpha, \beta)=0$. This is an $A$-module over the closed point $(u-\alpha, v-\beta)$, $\alpha, \beta\in k=k(\pp)$, where $\pp\in\Spec(\oo)$.  

We have
$$u\eb=\alpha \eb, \,\, x\eb = a\eb \quad \Longrightarrow \quad x^3\eb = u\eb=\alpha \eb$$so $a^3=\alpha$.  Therefore, there are three possibilities for $a$, namely, $a=\sqrt[3]{\alpha}$, $a_1:=\zeta\sqrt[3]{\alpha}=\zeta a$ and $a_2:=\zeta^2\sqrt[3]{\alpha}=\zeta^2 a$. The same applies to $v$, $y$, $\beta$ and $b$.

For $M'$ to be an $A$-module we need to have that $(xy-\zeta yx)\eb=0$:
$$(xy-\zeta yx)\eb= ab-\zeta ab=(1-\zeta)ab=0,$$hence $ab=0$. Assume that $b=0$, implying that $\beta=0$.

Take two $A$-modules
$$M:=k\eb, \quad u\eb = \alpha \eb, \,\, x\eb = a\eb$$ 
and $$N:=k\fb, \quad u\fb = \alpha \fb, \,\, x\fb = \zeta a\fb.$$ Let $\delta$ be a derivation $\delta: A\to\Hom(M, N)$. Put $\delta(x)\eb:=d_x \fb$, and, in addition $\delta(u)\eb := d_u\fb$. We have
\begin{align*}
\delta(x^3-u)\eb&=\big(\delta(x)x^2+x\delta(x)x+x^2\delta(x)-\delta(u)\big)\eb\\
&=(a^2 d_x + \zeta a^2 d_x +\zeta^2 a^2 d_x-d_u)\fb\\
&=(1+\zeta+\zeta^2)a^2d_x\fb-d_u\fb\\
&=-d_u\fb.
\end{align*}Since $\zeta$ is a third root of unity $1+\zeta+\zeta^2=0$. Hence, $d_u=0$ and $d_x$ can be chosen to be a free parameter. 

Also, putting $\delta(v):=d_v$ and remembering that $u\eb=\alpha \eb$ and $v\eb=0$,
$$\delta(uv-vu)\eb= \big(\delta(u)v+u\delta(v)-\delta(v)u-v\delta(u)\big)\eb = 0.$$ Therefore, we can choose $d_v$ as a free parameter. 

Furthermore, we need to have $\delta(xy-\zeta yx)\eb=0$ so 
$$\delta(xy-\zeta yx)\eb = \big(\delta(x)y+x\delta(y)-\zeta \delta(y)x-\zeta y\delta(x)\big)\eb=\zeta a d_y\fb-\zeta a d_y\fb=0 ,$$ where $d_y:=\delta(y)$, can be chosen as a free parameter.  

So far we have $d_x$, $d_v$ and $d_y$ as free parameters. 

Let $\theta\in\Hom(M, N)$ with $\theta\eb = t\fb$. We directly see that $\dim(\mathrm{Ad})=1$ since 
$$(\theta x- x\theta)\eb=\theta x\eb -x\theta\eb=(t_1a-\zeta t_1a)\fb=(1-\zeta)at_1\fb.$$

Therefore,
$$\Ext^1_A(M, N)=k^2.$$

On the other hand, choose $N'$ as the module where $x$ acts as $x\eb = a_2\eb$. We then get 
\begin{align*}
\delta(x^3-u)\eb&=\big(\delta(x)x^2+x\delta(x)x+x^2\delta(x)-\delta(u)\big)\eb\\
&=(a^2 d_x + \zeta^2 a^2 d_x +\zeta^4 a^2 d_x-d_u)\fb\\
&=(1+\zeta^2+\zeta^4)a^2d_x\fb-d_u\fb\\
&=-d_u\fb,
\end{align*}since $\zeta^4=\zeta$. Hence $d_x$ is still free and $d_u=0$. We also, still, have that $d_v$ is free. However,
\begin{align*}
\delta(xy-\zeta yx)\eb &= \big(\delta(x)y+x\delta(y)-\zeta \delta(y)x-\zeta y\delta(x)\big)\eb\\
&=(a_2d_y-\zeta ad_y)\fb\\
&=\zeta(\zeta-1)ad_y\fb.
\end{align*}Therefore, $d_y=0$. The inner derivations are clearly still 1-dimensional. This means that
$$\Ext^1_A(M, N')=k.$$

Shifting the points cyclically we find that the diagram must look like the depiction in figure \ref{fig:3curvetangent}.
\begin{figure}[h!]
\centering
\includegraphics[scale=0.7]{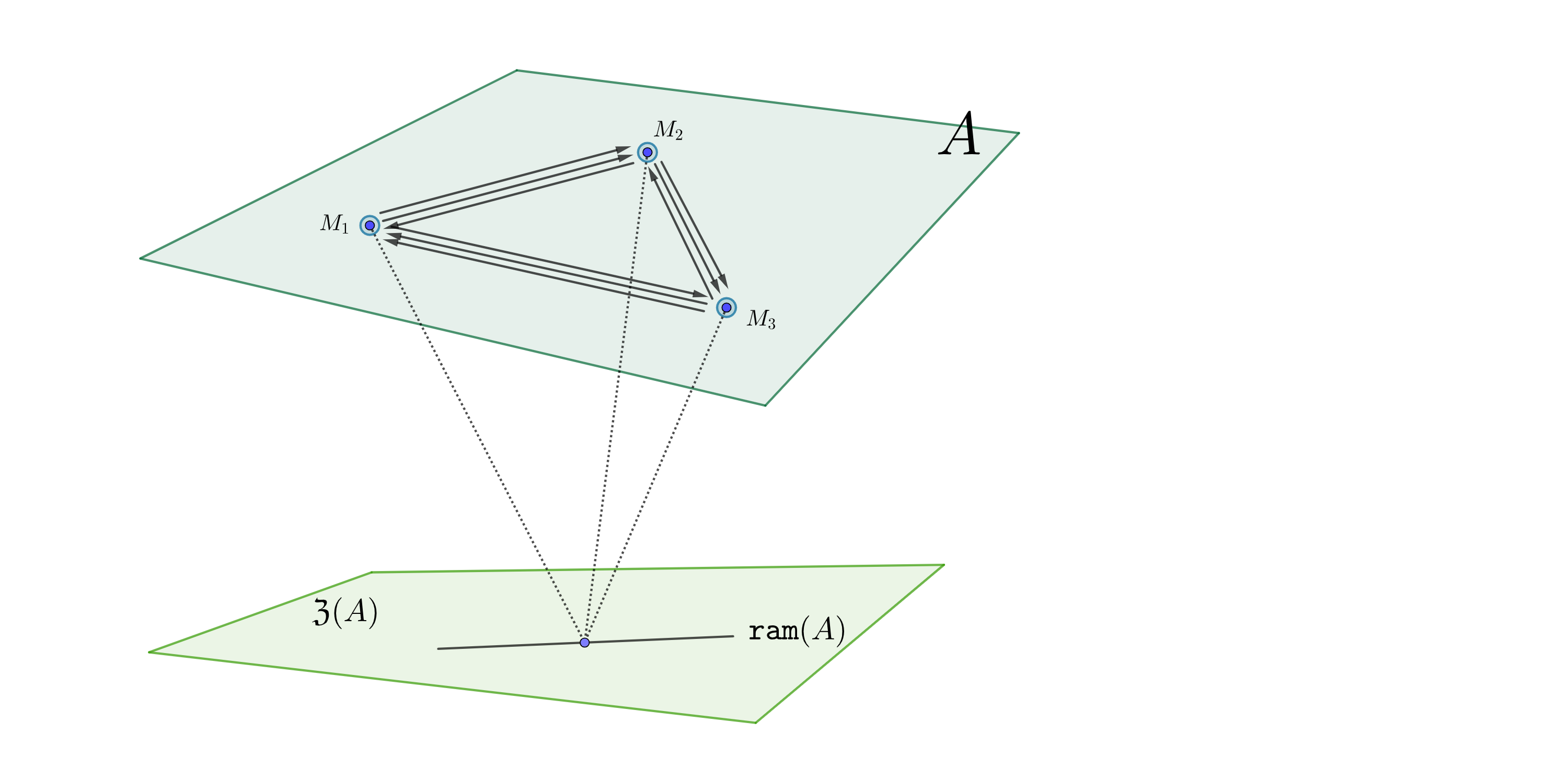}
\caption{Tangent situation}\label{fig:3curvetangent}
\end{figure}

We easily find that 
$$\Ext^1_A(M,M)=\begin{cases}
	1, & \mathrm{char}(k)\neq 3\\
	2, &  \mathrm{char}(k)= 3.
	\end{cases}$$
The adjacency matrix becomes when $\mathrm{char}(k)\neq 3$,
$$E=\begin{pmatrix}
	1 & 2 & 1\\
	1 & 1 & 2\\
	2 & 1 & 1
	\end{pmatrix}$$ and, when  $\mathrm{char}(k)= 3$,
	$$E=\begin{pmatrix}
	2 & 2 & 1\\
	1 & 2 & 2\\
	2 & 1 & 2
	\end{pmatrix}$$
In the first case the characteristic polynomial is $P(\lambda) = \lambda^3 - 3\lambda^2 - 3\lambda - 4$ and in the second $P(\lambda) = \lambda^3 - 6\lambda^2 -6 \lambda - 5$. Hence, we find that the non-commutative height is dependent on the characteristic of the ground field. 

We leave for the reader to play around with rational points, divisors and intersection theory and report back to the author when finished. 
\begin{center}
\rule{0.50\textwidth}{0.5pt}
\end{center}
\bibliographystyle{alpha}

\bibliography{ref_Arit_NC}
%\newpage
%\section{Appendix; computations}
%
%
%\subsection{Quotient 2}
\end{document}